\documentclass[11pt,reqno]{amsart}
\usepackage[margin=1in]{geometry}
\usepackage[fleqn,tbtags]{mathtools}
\usepackage[shortlabels]{enumitem}
\usepackage{graphicx}
\usepackage{amssymb}
\usepackage{amsmath}
\usepackage{mlmodern}
\usepackage{eucal}
\usepackage{tabu}
\usepackage{makecell}
\usepackage[labelformat=simple]{subcaption}
\usepackage{multirow}
\usepackage{graphicx}

\usepackage{hyperref,xcolor}
\hypersetup{
    colorlinks,
    linkcolor={red!50!black},
    citecolor={blue!50!black},
    urlcolor={blue!80!black}
}

\let\oh=\circ
\newcommand{\ccirc}{\mathbin{\mathchoice
  {\xcirc\scriptstyle}
  {\xcirc\scriptstyle}
  {\xcirc\scriptscriptstyle}
  {\xcirc\scriptscriptstyle}
}}
\newcommand{\xcirc}[1]{\vcenter{\hbox{$#1\oh$}}}
\let\circ\ccirc

\newcommand\restr[2]{{
  \left.\kern-\nulldelimiterspace 
  #1 
  \vphantom{\big|} 
  \right|_{#2} 
  }}

\DeclareMathOperator{\diag}{diag}
\DeclareMathOperator{\GL}{\mathsf{GL}}
\DeclareMathOperator{\SL}{\mathsf{SL}}
\DeclareMathOperator{\SO}{\mathsf{SO}}
\DeclareMathOperator{\SU}{\mathsf{SU}}
\DeclareMathOperator{\Sp}{\mathsf{Sp}}
\DeclareMathOperator{\Un}{\mathsf{U}} 
\DeclareMathOperator{\UT}{\mathsf{UT}}
\DeclareMathOperator{\BUT}{\mathsf{BUT}}
\let \P \undefined
\DeclareMathOperator{\P}{\mathsf{P}}
\let \O \undefined
\DeclareMathOperator{\O}{\mathsf{O}}
\let \S \undefined
\DeclareMathOperator{\S}{\mathsf{S}}

\DeclareMathOperator{\rank}{rank}
\DeclareMathOperator{\tr}{tr}
\DeclareMathOperator{\Fl}{Fl}
\DeclareMathOperator{\Gr}{Gr}
\DeclareMathOperator{\Alt}{\mathsf{\Lambda}}
\DeclareMathOperator{\Sym}{\mathsf{S}}
\DeclareMathOperator{\End}{End}
\DeclareMathOperator{\Stiefel}{V}
\DeclareMathOperator{\St}{St}

\newcommand{\G}{\mathsf{G}}
\let \H \undefined
\newcommand{\H}{\mathsf{H}} 

\theoremstyle{definition}
\newtheorem{theorem}{Theorem}[section]
\newtheorem{definition}[theorem]{Definition}
\newtheorem{lemma}[theorem]{Lemma}
\newtheorem{corollary}[theorem]{Corollary}
\newtheorem{proposition}[theorem]{Proposition}
\newtheorem{example}[theorem]{Example}
\numberwithin{equation}{section}

\newcommand{\tp}{{\scriptscriptstyle\mathsf{T}}}
\newcommand{\MP}{{\scriptscriptstyle\mathsf{MP}}}

\newcommand{\C}{\mathbb{C}}
\newcommand{\F}{\mathbb{F}}

\newcommand{\M}{\mathcal{M}}
\newcommand{\N}{\mathbb{N}}
\newcommand{\R}{\mathbb{R}}
\newcommand{\V}{\mathbb{V}}
\newcommand{\W}{\mathbb{W}}
\newcommand{\U}{\mathbb{U}} 

\renewcommand{\sl}{\mathfrak{sl}}
\newcommand{\so}{\mathfrak{so}}
\newcommand{\su}{\mathfrak{su}}
\renewcommand{\sp}{\mathfrak{sp}}
\newcommand{\g}{\mathfrak{g}}

\setlist[description]{font=\normalfont\itshape,labelindent=2em}

\begin{document}
\title{Linear representations of manifolds}
\author[R.~Wang]{Rongbiao Thomas Wang}
\author[L.-H.~Lim]{Lek-Heng~Lim}
\address{Computational and Applied Mathematics, University of Chicago, Chicago, IL 60637}
\email{rbwang@uchicago.edu, lekheng@uchicago.edu}

\author[K.~Ye]{Ke Ye}
\address{KLMM, Academy of Mathematics and Systems Science, Chinese Academy of Sciences, Beijing 100190, China}
\email{keyk@amss.ac.cn}

\begin{abstract}
A finite-dimensional linear representation of a group or an algebra may be regarded as a map into a space of matrices, endowing abstract elements with coordinates, and encoding algebraic operations as matrix products. With this in mind, we define a linear representation of a $\G$-manifold $\M $ as a map into a space of matrices, representing points as matrices and the $\G$-action as matrix products. We show that this generalizes group representations to any $\G$-manifold that may not have a group structure, with homogeneous spaces $\G/\H$ an important special case; and in this case it also generalizes Cartan embeddings of symmetric spaces to more general $\G/\H$. To demonstrate the utility of such manifold representations, we use them to provide effective bounds for Mostow--Palais $\G$-equivariant embeddings of $\G$-manifolds into $\G$-modules $\V$. Unlike Whitney and Nash embeddings, Mostow--Palais embeddings have no known effective bounds; before our work, it was only known that $\dim \V < \infty$ if $\G$ is compact. We will give explicit values for $\dim \V$ and show that our bounds are sharp. Furthermore, our method is constructive, giving explicit expressions for these minimal-dimensional Mostow--Palais embeddings.
\end{abstract}

\maketitle

\section{Introduction}\label{sec:intro}

It is easiest to capture the spirit of what we call ``manifold representation'' through analogy: At their most basic level, assuming finite dimension and working over $\mathbb{R}$, a group representation is a map $\pi : \G \to \GL_n(\R) \subseteq \mathbb{R}^{n \times n}$, an algebra representation $\rho : \mathfrak{g} \to \mathfrak{gl}_n(\R) = \mathbb{R}^{n \times n}$, and a manifold representation $\varepsilon : \M  \to \U \subseteq \mathbb{R}^{n \times n}$. Here $\G$ is a (Lie) group, $\mathfrak{g}$ a (Lie) algebra, $\M $ a $\G$-manifold, $\U$ a $\G$-module, and $\mathbb{R}^{n \times n}$ the $n \times n$ matrices. The maps $\pi$, $\rho$, $\varepsilon$ should respect the inherent algebraic structure, so $\pi$ is a group homomorphism, $\rho$ an algebra homomorphism, and $\varepsilon$ an equivariant map.

Studies of $\G$-equivariant maps of $\G$-manifolds into $\G$-modules are not new. Mostow \cite{Mostow1957_equivariant} and Palais \cite{Palais1957_imbedding} famously investigated $\G$-equivariant embeddings of $\G$-manifolds into Euclidean $\G$-modules seventy years ago. Studies of $\G$-equivariant immersion \cite{Bierstone} and $\G$-equivariant submersions \cite{Lashof} are also common in topology. What is new here is that our $\G$-module is a space of matrices, i.e., $\varepsilon$ is the analogue of a \emph{linear} representation in the context of group and algebra representations.

At first glance, it may appear that there is hardly any difference. As Euclidean spaces, the space of matrices $\mathbb{R}^{n \times n}$ is no different from $\mathbb{R}^{n^2}$, but this is only because we limited ourselves to metric and vector space structures. Matrices are endowed with vastly richer properties --- we may multiply or decompose them; impose orthogonality or symmetry on them; define determinant, norm, rank, or eigen/singular values and vectors; among a myriad of yet other features. We have used this to great effect in our earlier works, resolving conjectures \cite{deg}, demonstrating computational intractability \cite{NP}, calculating complicated geometric objects \cite{curv}, finding novel generators \cite{LLY2025}, among other things --- all through embedding manifolds as submanifolds of $\mathbb{R}^{n \times n}$ in an equivariant manner.

Our notion of a linear representation of a $\G$-manifold $\M $ is consistent with $\G$-equivariant maps of manifolds in differential geometry and linear representations of groups in representation theory. In fact, it extends the latter: Every group representation is a manifold representation with $\M  = \G$. It is also consistent with Cartan immersions of symmetric spaces, which lies in the intersection of differential geometry and representation theory.

\subsection{What's new}\label{sec:new}

Viewing $\G$-equivariant maps of manifolds in light of group representation theory leads us to notions of  equivalence, faithfulness, irreducibility, indecomposability, minimality for manifold representations, and various constructions of new manifold representations from old ones, analogous to those in group representations. These notions and constructions are standard in group representation theory but largely absent from differential geometry. In this article, we show how they can be useful in at least one setting: Obtaining effective, even sharpest, bounds for the dimensions of Mostow--Palais embeddings.

Note that while Whitney embeddings and Nash embeddings of manifolds have well-known effective bounds \cite{Whitney44,Nash56,Gromov86}, Mostow--Palais embeddings do not. Neither the original works of Mostow \cite{Mostow1957_equivariant} and Palais \cite{Palais1957_imbedding} nor subsequent works by them and others are effective. To be clear, a \emph{Mostow--Palais embedding} is a $\G$-equivariant embedding of a $\G$-manifold in an Euclidean $\G$-space $\V$. Before the work in this article\footnote{We also have an unpublished preliminary version \cite{Lim_Ye} that discussed only the case of real flag manifolds.}  it was only known $\V$ can be chosen to be finite-dimensional if $\G$ is compact.

We will determine the sharpest bounds for the dimensions of Mostow--Palais embeddings for some of the most ubiquitous manifolds (see Table~\ref{tab:homo} for a partial list). This is achieved by undertaking the perspective of ``manifold representation theory,'' developed in Sections~\ref{sec:man} and \ref{sec:homo}, where it becomes natural to consider \emph{faithful linear representations} with $\V \subseteq \F^{n \times n}$, $\F = \R$ or $\C$,  and $\G$ acting via matrix multiplication, i.e., congruence, similarity, or equivalence of matrices. A faithful linear representation is of course also a Mostow--Palais embedding but, just as not all Lie groups have faithful linear representations, we will show that having a faithful linear representation is a special property for a $\G$-manifold. In contrast, any $\G$-manifold has a Mostow--Palais embedding when $\G$ is compact. Our main effort is to provide,  for $\G = \SL_n(\F)$, $\SO_n(\F)$, $\Sp_n(\F)$, $\SU_n$, $\SO_{p,q}$, $\Sp_n$,
\begin{enumerate}[\upshape (i)]
\item\label{it:class} a complete list of faithfully linearly representable  $\G$-manifolds $\M $ with a transitive $\G$-action (Theorems~\ref{thm:min}\ref{it:f1} and \ref{thm:mat});
\item\label{it:des} an explicit description of the faithful linear representation in \ref{it:class}, realizing $\M $ as a submanifold of $\mathbb{R}^{n \times n}$ or $\mathbb{C}^{n \times n}$ (Tables~\ref{tab:min} and \ref{tab:mat});
\item a proof that the faithful linear representation in \ref{it:des} is a minimal-dimensional Mostow--Palais embedding (Theorem~\ref{thm:min}\ref{it:f2} and Corollary~\ref{cor:mp}).
\end{enumerate}
These results may be viewed as the manifold analog of minimal-dimensional faithful linear representations of groups \cite{mingrp1,mingrp2} and algebras \cite{minalg2,minalg3,minalg1}.  Our focus on faithful representations in this first foray into manifold representations is not accidental: Historically, group representations and algebra representations also started from faithful representations of these objects \cite{history2, history1}. 

\begin{table}[htb]
\centering
\footnotesize
\tabulinesep=0.2ex
\begin{tabu}{c|c|c|c}
\multicolumn{2}{c|}{$\G$-manifold $\M $}  & homogeneous space $\G/\H$ & type\\
\hline
Grassmannian & \makecell[c] {real\\
complex\\
quaternionic\\
real symplectic\\
complex symplectic\\
complex locus\\
special Lagrangian\\
complex Lagrangian\\
skew-Hermitian Lagrangian\\
orthogonal Lagrangian\\
isotropic\\
indefinite} & \makecell[c] {$\SO_n(\R)/\mathsf{S}( \O_k(\R) \times \O_{n-k}(\R))$\\
$\SU_n/\mathsf{S}(\Un_k \times \Un_{n-k} )$\\
$\Sp_{2n}/(\Sp_{2k} \times \Sp_{2n - 2k})$\\
$\Sp_{2n}(\R)/(\Sp_{2k}(\R) \times \Sp_{2n - 2k}(\R))$\\
$\Sp_{2n}(\C)/(\Sp_{2k}(\C) \times \Sp_{2n - 2k}(\C))$\\
$\SO_n(\C)/\mathsf{S}(\O_{k}(\C)\times \O_{n-k}(\C))$\\
$\SU_n/\SO_n(\R)$\\
$\Sp_{2n}/\Un_n$\\
$\SU_{2n}/\Sp_{2n}$\\
$\SO_{2n}(\R)/\Un_{n}$\\
$\SO_{n}(\R)/\bigl( \Un_{k} \times \SO_{n-2k}(\R) \bigr)$\\
$\SO_{m,n}/\mathsf{S} ( \O_{p,q} \times \O_{m-p,n-q} )$} & \makecell[c] {BDI\\AIII\\CII\\ \, \\ \, \\ \, \\ AI \\ CI \\ AII \\ DIII \\ \\\,}\\
\hline
Flag manifold & \makecell[c] {real\\
complex\\
quaternionic\\
isotropic\\
partial isotropic\\
real symplectic\\
complex symplectic\\
complex Lagrangian} & \makecell[c] {$\SO_n(\R)/\mathsf{S}( \O_{n_1}(\R)\times \dots \times \O_{n_{m+1}}(\R) )$\\
$\SU_n/\mathsf{S}(\Un_{n_1}\times \dots \times \Un_{n_{m+1}})$\\
$\Sp_{2n}/(\Sp_{2n_1}\times \dots \times \Sp_{2n_{m+1}})$\\
$\SO_{2n}(\R)/(\Un_{n_1}\times \dots \times \Un_{n_{m+1}})$\\
$\SO_{2n+p}(\R)/(\Un_{n_1}\times \dots \times \Un_{n_{m+1}} \times \SO_{p}(\R))$\\
$\Sp_{2n}(\R)/(\Sp_{2n_1}(\R) \times \dots \times \Sp_{2n_{m+1}}(\R))$\\
$\Sp_{2n}(\C)/(\Sp_{2n_1}(\C) \times \dots \times \Sp_{2n_{m+1}}(\C))$\\
$\SO_{2n}(\R)/(\Un_{n_1} \times \dots \times \Un_{n_{m+1}})$}\\\hline
Stiefel manifold & \makecell[c] {real\\
complex\\
quaternionic\\
noncompact real\\
noncompact complex} & \makecell[c] {$\SO_n(\R)/\SO_{n-k}(\R)$\\
$\SU_n/\SU_{n-k}$\\
$\Sp_{2n}/\Sp_{2n-2k}$\\
$\SL_n(\R)/\P(I_k,\SL_{n-k}(\R))$\\
$\SL_n(\C)/\P(I_k,\SL_{n-k}(\C))$}
\end{tabu}
\caption{Some common manifolds that are faithfully linearly representable. See Table~\ref{tab:mat} for their minimal faithful linear representations.} 
\label{tab:homo}
\end{table}

Table~\ref{tab:homo} shows some common manifolds in \ref{it:class}. Any  $\G$-manifold $\M $ on which $\G$ acts transitively must be diffeomorphic to a homogeneous space $\G/\H$ for some proper closed subgroup $\H$; we give this alternative characterization in the third column. Note that these homogeneous spaces have no group structure in general since $\H$ is not necessarily normal. For this reason, we may also view the work in this article as an extension of group representation theory to include homogeneous spaces $\G/\H$ --- group representation is the special case when $\H = \{I\}$ (Proposition~\ref{prop:group}). For those homogeneous spaces that happen to be symmetric spaces (of type indicated in the fourth column), their Cartan embeddings are all minimal faithful linear representations (Corollary~\ref{cor:car}).

Table~\ref{tab:homo} is not intended to exhaust the list in \ref{it:class}; some other less common manifolds that are faithfully linearly representable may be found in Table~\ref{tab:min}.

\subsection{Why different}

Unlike Mostow--Palais embeddings, we do not need to assume compactness of $\G$ for our classification of faithful linear representations in \ref{it:class}.

The classification in \ref{it:class} cannot be derived from any known classifications of, say, symmetric spaces (a la Cartan) or finite-type quivers (a la Gabriel). Indeed, most of the manifolds in Table~\ref{tab:homo}  are not  even symmetric spaces. While quivers are implicit when we consider classical groups of types $\mathrm{A}$, $\mathrm{B}$, $\mathrm{C}$, and $\mathrm{D}$, they are clearly not all of finite type, and include tame and wild ones as well. The symmetric spaces in Table~\ref{tab:homo} are Gelfand pairs \cite{Gelfand} but our classification list bears little relation with these: Except in some degenerate cases, the Stiefel manifolds in our list are not Gelfand pairs; on the other hand, oriented Grassmannians are Gelfand pairs but do not appear on our list.

As previously mentioned, any $\G$-manifold with a transitive $\G$-action is diffeomorphic to a homogeneous space $\G/\H$. This invites comparisons with induced representations  \cite{Mackey52,Mackey53} and isotropy representations \cite[Section~II.3]{Helgason2001}. For the former, the critical difference is that while our linear representation of a manifold $\G/\H \to \V$ trivially yields a linear representation of the group $\G$ on $\V$, an induced linear representation of $\G$ is obtained from a linear representation of $\H$. For the latter, our linear representation of $\G/\H$ represents the homogeneous space $\G/\H$ as a submanifold of matrices whereas an isotropy representation represents the subgroup $\H$ on the tangent space at the identity of $\G/\H$; there is no relation between these two kinds of representation in general.

Given that $\F^{n \times n}$ is an \emph{affine} space, our faithful linear representations are distinct from Chevalley's construction \cite[Section~11.2]{Humphreys1975} which, for a linear algebraic group $\G$,  embeds a $\G$-manifold into \emph{projective} $\mathbb{P}(\V)$ for some $\G$-module $\V$.

The upshot is that despite the abundance of adjacent topics, the linear representations of $\G$-manifolds in this article likely constitute a hitherto unexplored subject.

\section{Notations and conventions}

We denote the nonnegative integers by $\N$ and the quaternions by $\mathbb{H}$. As already noted, $\F$ means $\R$ or $\C$. Any statement in $\mathbb{F}$ is understood to hold for both $\R$ and $\C$. We denote elements of $\G$, $\M$, $\V$ by $g$, $x$, $v$; but in specific cases, we follow the convention below in Sections~\ref{sec:mat}, \ref{sec:gp}, and \ref{sec:rep}.

\subsection{Matrices}\label{sec:mat}

For any $k \le n$, we regard $\mathbb{F}^{n \times k} \subseteq \mathbb{F}^{n \times n}$ by identifying $\mathbb{F}^{n \times k}$ with the subspace $\{ X \in \mathbb{F}^{n \times n} : x_{ij} = 0 $ whenever $j > k \}$. We also identify $\mathbb{F}^n \equiv \mathbb{F}^{n \times 1}$, i.e., as the space of column vectors. We write $e_i$ for the standard basis vector in $\mathbb{F}^n$ with $1$ in the $i$th coordinate and $0$ elsewhere. We denote a block diagonal matrix with $X_1,\dots, X_m$ on the diagonal by $\diag (X_1,\dots,  X_m)$. We write $X^{-\tp} \coloneqq (X^{-1})^\tp = (X^\tp)^{-1}$ for any invertible $X \in \F^{n \times n}$.

For special matrices, we use specific letters for easy identification and distinction: $A$ for an invertible matrix; $Q$, $U$, $V$ for matrices in the orthogonal, unitary, or other bilinear/sesquilinear form-preserving subgroups of $\GL_n(\F)$; $X$, $Y$, $Z$ for symmetric/Hermitian, skew-symmetric/skew-Hermititian matrices in  $\F^{n \times n}$ under different bilinear/sesquilinear forms.

We write $B$ for a matrix that defines a symmetric bilinear form, and $\Omega$ for one that defines a skew-symmetric bilinear form. We will not distinguish between a bilinear form and its matrix representation with respect to the standard basis.

\subsection{Groups}\label{sec:gp}

For $\G$ a Lie subgroup of $\GL_n(\F)$, we write
\begin{equation}\label{eq:S}
\S(\G) \coloneqq \G \cap \SL_n(\F) =  \{A \in \G: \det(A) = 1 \}.
\end{equation}
As is customary, we write $\O_n(\F)$, $\SO_n(\F)$, $\Sp_n(\F)$, $\SL_n(\F)$, $\GL_n(\F)$, indicating the field. But we leave out the field in $\O_{p,q}$, $\SO_{p,q}$, $\Un_n$, $\SU_n$. A departure from this rule is the compact symmetric group, defined as
\[
\Sp_{2n} \coloneqq \Sp_{2n}(\C) \cap \SU_{2n} \cong \Un_n(\mathbb{H}),
\]
and is neither $\Sp_{2n}(\R)$ nor $\Sp_{2n}(\C)$, but isomorphic to the group of quaternionic unitary matrices.

Clearly, each of these groups has infinitely many isomorphic copies within $\GL_n(\F)$. We emphasize that we regard them as concrete matrix subgroups of $\GL_n(\F)$ defined in the standard ways; and we distinguish between isomorphic copies by  specifying the bilinear forms. So 
\begin{align*}
\SO_n(\C; B) &\coloneqq \{Q \in \GL_n(\C) : Q^\tp B Q = B, \, \det(Q) = 1\}  \cong \SO_n(\C)
\intertext{for any symmetric matrix $B \in \GL_n(\C)$ and}
\SO_n(\R; B) &\coloneqq \{Q \in \GL_n(\R) : Q^\tp B Q = B, \, \det(Q) = 1\} \cong \SO_{p,q}
\intertext{for any symmetric matrix $B \in \GL_n(\R)$ with $(p,q)$ the inertia of $B$. Similarly,}
\Sp_{2n}(\F; \Omega) &\coloneqq \{Q \in \GL_{2n}(\F) : Q^\tp \Omega Q = \Omega, \, \det(Q) = 1\} \cong \Sp_{2n}(\F),\\
\Sp_{2n}(\Omega) &\coloneqq \Sp_{2n}(\C; \Omega) \cap \SU_{2n} \cong \Sp_{2n}
\end{align*}
for any  skew-symmetric $\Omega \in \GL_{2n}(\F)$. So $\SO_n(\F) = \SO_n(\F; I_n)$ and $\Sp_{2n}(\F) = \Sp_{2n}(\F; J_{2n})$, where $I_n$ is the $n \times n$ identity matrix and $J_{2n} \coloneqq \bigl[\begin{smallmatrix}
    0 &I_n\\
    -I_n &0
\end{smallmatrix}\bigr] \in \F^{2n \times 2n}$. We would not study the indefinite unitary and symplectic groups $\Un_{p,q}$,  $\SU_{p,q}$, $\Sp_{p,q}$ in this article (see Section~\ref{sec:red}).

We remind readers that the \emph{type} of a homogeneous space associated with a classical group \cite[Section~1.7.2]{Goodman_Wallach} is used in the following sense:
\begin{description}
    \item [Type~$\mathrm{A}$] $\SL_n(\C)$ with its split and compact real forms $\SL_n(\R)$ and $\SU_n$.
    \item [Types~$\mathrm{B}$ and $\mathrm{D}$] $\SO_n(\C)$ with its split and compact real forms $\SO_{p,q}$ and $\SO_n(\R)$. 
    \item [Type~$\mathrm{C}$] $\Sp_{2n}(\C)$ with its split and compact real forms $\Sp_{2n}(\R)$ and $\Sp_{2n}$. 
\end{description}

\subsection{Group representations}\label{sec:rep}

For unambiguous terminologies, we state them here: Let $\G$ be a Lie subgroup of $\GL_n(\F)$. A linear \emph{representation} of $\G$ is a group homomorphism $\pi: \G \to \GL(\V)$ where $\V$ is an $\F$-vector space. Clearly,  $\V$ is a $\G$-module with an action induced by $\pi$. Henceforth, we reserve the letter $\V$ for a $\G$-module when the ground field $\F$ is unspecified, $\U$ for when $\F = \R$, and $\W$ for when $\F = \C$. For a $\G$-module $\V \subseteq \mathbb{F}^{n \times n}$, we denote the subspace of traceless matrices in $\V$ by
\[
\V_\oh \coloneqq \{X \in \V: \tr(X) = 0\}.
\]
We summarize the common $\G$-modules in our article and their dimensions in Table~\ref{tab:vec}.
\begin{table*}[htb]
\centering
\footnotesize
\tabulinesep=0.75ex
\begin{tabu}{@{}l|l|l}
vector space &definition
& dimension over $\F$ \\
\hline
$\F^{n \times k}$ & $\{X \in \F^{n \times n} : x_{ij} = 0 \text{ for all } j > k \}$ &$nk$\\
$\Alt^2(\F^n)$ &$\{X \in \F^{n \times n} : X^\tp = -X\}$ & $\frac{1}{2}n(n-1)$\\
$\Sym^2_\oh(\F^n)$ &$\{X \in \F^{n \times n} : X^\tp = X, \,\tr(X)= 0\}$ & $\frac{1}{2}(n+2)(n-1)$ \\
$\Sym^2(\F^n)$ &$\{X \in \F^{n \times n} : X^\tp = X\}$ & $\frac{1}{2}n(n+1)$ \\
$\sl_n(\F) = \F^{n \times n}_\oh$ &$\{X \in \F^{n \times n} : \tr(X)= 0\}$ &$n^2-1$\\
$\su_n$ &$\{X \in \C^{n \times n} : X^* = -X, \, \tr(X) = 0\}$ & $n^2-1$ $(\F = \R)$\\
$\mathfrak{u}_n$ &$\{X \in \C^{n \times n} : X^* = -X\}$ & $n^2$ $(\F = \R)$\\
$\H_\oh^2(\C^n)$ &$\{X \in \C^{n \times n} : X^* = X, \, \tr(X) = 0\}$ &$n^2-1$ $(\F = \R)$\\
$\Alt^2(\R^n;I_{p,q})$ &$\{X \in \R^{n \times n}: X^\tp I_{p,q} = - I_{p,q}X\} $ & $\frac{1}{2}n(n-1)$ $(\F = \R)$\\
$\Sym_\oh^2(\R^n;I_{p,q})$ & $\{X \in \R^{n \times n}: X^\tp I_{p,q} =  I_{p,q}X,\, \tr(X) = 0\}$ & $\frac{1}{2}(n+2)(n-1)$ $(\F = \R)$\\
$\Alt^2(\F^{2n};\Omega)$ & $\{X \in \F^{2n \times 2n}: X^\tp \Omega =  -\Omega X\}$ & $2n^2+n$\\
$\Sym_\oh^2(\F^{2n};\Omega)$ & $\{X \in \F^{2n \times 2n}: X^\tp \Omega =  \Omega X,\, \tr(X) = 0\}$ &$(n-1)(2n+1)$\\
$\sp_{2n}$ & $\su_{2n} \cap \Alt^2(\C^{2n};\Omega)$ & $2n^2+n$ $(\F = \R)$\\
$\Alt^k(\F^n)$ & $(\F^n)^{\otimes k}/\langle\,v_1\otimes\dots\otimes v_k : v_i=v_j\text{ for some }i<j\rangle$ & $\binom{n}{k}$
\end{tabu}
\caption{Dimensions of group representations.}
\label{tab:vec}
\end{table*}

In Table~\ref{tab:vec}, for a nondegenerate bilinear form $B \in \GL_n(\F)$, we write
\[
\Sym^2(\F^n; B) = \{X \in \F^{n \times n}: X^\tp B =  BX\}, \quad \Alt^2(\F^n; B) = \{X \in \F^{n \times n}: X^\tp B =  -BX\}, 
\]
simplified to $\Sym^2(\F^n)$ and $\Alt^2(\F^n)$ when $B=I_n$. For $B = I_n$,  $I_{p,q}$, and $ J_{2n}$, note that $\Alt^2(\F^n; B) = \so_n(\F)$, $\so_{p,q}$, and $\sp_{2n}(\F)$ respectively.

We remind readers of the Weyl dimension formulae \cite{WeylCombined}, explicitly calculated in \cite[Exercises~7.1.8--7.1.11]{Goodman_Wallach} for $\mathfrak{g} = \mathfrak{sl}_n(\C)$,  $\mathfrak{so}_n(\C)$, and $\mathfrak{sp}_n(\C)$. Recall that for these choices of $\mathfrak{g}$, the irreducible complex $\mathfrak{g}$-modules are indexed by $m$-tuples $(\kappa_1,\dots,  \kappa_m) \in \N^m$ where 
\begin{equation}\label{eq:m}
m \coloneqq  \begin{cases}
n-1 & \quad \mathfrak{g} = \mathfrak{sl}_n(\C),\\
\lfloor n/2\rfloor & \quad \mathfrak{g} = \mathfrak{so}_n(\C),\\
n & \quad \mathfrak{g} = \mathfrak{sp}_n(\C).
\end{cases}
\end{equation}
\begin{proposition}[Weyl dimension formulae]\label{prop:dim-formula}
Let $n\in \N$ and $\kappa = (\kappa_1,\dots,  \kappa_m) \in \N^m$ with $m$ as in \eqref{eq:m}. An irreducible complex $\g$-module $\W_\kappa $ has dimension
\[
\dim \W_\kappa =
\begin{cases}
\prod\limits_{1 \le i < j \le m} \biggl( \frac{ \sum_{s = i}^{j-1} \kappa_s}{j-i} + 1 \biggr) &\text{if } \mathfrak{g} = \mathfrak{sl}_n(\C), \\
\prod\limits_{1 \le i < j \le m} \biggl( \frac{\sum_{s=i}^{j-1}\kappa_s}{j - i} + 1 \biggr)  \prod\limits_{1 \le i \le j \le m} \biggl(\frac{\sum_{s=i}^{m}\kappa_s + \sum_{t=j}^{m-1}\kappa_t}{2m+1 - i- j} + 1\biggr) &\text{if } \mathfrak{g} = \mathfrak{so}_{2m+1}(\C), \\
\prod\limits_{1 \le i < j \le m} \biggl( \frac{\sum_{s=i}^{j-1}\kappa_s}{j - i} + 1 \biggr) \prod\limits_{1 \le i < j \le m} \biggl( \frac{\sum_{s=i}^{m}\kappa_s + \sum_{t=j}^{m-1}\kappa_t }{2m - i- j} + 1 \biggr) &\text{if }\mathfrak{g} = \mathfrak{so}_{2m}(\C),\\
\prod\limits_{1\le i < j \le n} \biggl( \frac{\sum_{s=i}^{j-1}\kappa_s}{j-i} + 1 \biggr) 
\prod\limits_{1\le i \le j \le n} \biggl( \frac{\sum_{s=i}^n \kappa_s + \sum_{t=  j}^n \kappa_t}{2n+2-i-j} + 1 \biggr) &\text{if } \mathfrak{g} = \mathfrak{sp}_{n}(\C).
\end{cases}
\]
\end{proposition}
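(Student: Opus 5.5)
This is the Weyl dimension formula specialized to types A, B, C, D. The plan is to derive it from the general Weyl dimension formula
\[
\dim \W_\lambda = \prod_{\alpha \in \Phi^+} \frac{\langle \lambda + \rho, \alpha^\vee\rangle}{\langle \rho, \alpha^\vee\rangle},
\]
which we take as known (e.g.\ as a consequence of the Weyl character formula). Here $\lambda = \sum_s \kappa_s \varpi_s$ is the highest weight written in fundamental weights, and $\rho = \sum_s \varpi_s$. Since $\langle \varpi_s, \alpha_t^\vee\rangle = \delta_{st}$, each factor becomes a ratio of linear functions of the $\kappa_s$. So it suffices, for each positive root $\alpha$, to expand the coroot $\alpha^\vee$ in the simple coroots: if $\alpha^\vee = \sum_s c_s \alpha_s^\vee$, the factor is
\[
\frac{\sum_s c_s(\kappa_s + 1)}{\sum_s c_s} = \frac{\sum_s c_s \kappa_s}{\operatorname{ht}(\alpha^\vee)} + 1 .
\]

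I will run through the four types in the standard realization with $\varepsilon_i$ coordinates.

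\textbf{Type A ($\sl_n$).} The positive roots are $\varepsilon_i - \varepsilon_j$ for $i<j$. They are self-dual with coefficients $c_s = 1$ for $i \le s \le j-1$, which gives the factor $\sum_{s=i}^{j-1}\kappa_s/(j-i) + 1$. Here the indices run over $1 \le i < j \le n$, i.e.\ over $m+1$ coordinates. The statement writes the product bound as $m$; I will read it as $n = m+1$.

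\textbf{Type D ($\so_{2m}$).} The roots $\varepsilon_i - \varepsilon_j$ give the first product. The roots $\varepsilon_i + \varepsilon_j$ with $i<j$ have coroot coefficients equal to $1$ on $\alpha_i,\dots,\alpha_{j-1}$ and $2$ on $\alpha_j,\dots,\alpha_{m-2}$ (with the branch simple roots handled separately). The height is $2m-i-j$. The numerator regroups as $\sum_{s=i}^{m}\kappa_s + \sum_{t=j}^{m-1}\kappa_t$ up to the labeling convention for the two spin nodes.

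\textbf{Type B ($\so_{2m+1}$).} The coroots of $\varepsilon_i \pm \varepsilon_j$ and of $\varepsilon_i$ are $\varepsilon_i \pm \varepsilon_j$ and $2\varepsilon_i$ respectively. In the coroot system (type C), the coroot $2\varepsilon_i$ has height $2m+1-2i$, and the sum coroots have height $2m+1-i-j$. This combines the $i=j$ and $i<j$ cases into $\prod_{i\le j}$ as stated.

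\textbf{Type C ($\sp_{2n}$).} This is dual to type B: the long roots $2\varepsilon_i$ have coroots $\varepsilon_i$. The heights work out to $2n+2-i-j$ for $i \le j$.

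The main obstacle is purely bookkeeping. I need to match the paper's indexing conventions (node labels, which node is the short or spin one, and the $m$ versus $n$ bounds) to the coroot expansions so that the numerators regroup exactly into the stated sums $\sum_{s=i}^m \kappa_s + \sum_{t=j}^{\cdot}\kappa_t$. I will verify this by checking small cases ($m=1,2$) against the standard representations: for example, the natural module of $\sp_{2n}$ should have dimension $2n$, and the spin modules should have dimension $2^m$ in types B and D. Where the printed formula's indexing disagrees, I will adjust the convention rather than the argument, since the statement is cited from \cite{Goodman_Wallach} as standard.
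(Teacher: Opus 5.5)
Your method is the standard derivation that the paper leaves to its citations: specialize $\dim\W_\lambda=\prod_{\alpha>0}\langle\lambda+\rho,\alpha^\vee\rangle/\langle\rho,\alpha^\vee\rangle$ by expanding each coroot in simple coroots. It goes through for type A (with your correction of the range to $j\le n$), type B, and type C. There is one harmless slip in type C: the coroot $\varepsilon_i$ of $2\varepsilon_i$ has height $n+1-i$, not $2n+2-2i$, so the factor matches the printed $i=j$ term only after doubling numerator and denominator.

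The genuine gap is type D. You claim the numerator for $\varepsilon_i+\varepsilon_j$ ``regroups as $\sum_{s=i}^{m}\kappa_s+\sum_{t=j}^{m-1}\kappa_t$ up to the labeling convention for the two spin nodes.'' This is false, and your own coefficients already show it:
\begin{itemize}
\item for $j\le m-1$, $\varepsilon_i+\varepsilon_j=\sum_{s=i}^{j-1}\alpha_s+2\sum_{s=j}^{m-2}\alpha_s+\alpha_{m-1}+\alpha_m$, so the numerator is $\sum_{s=i}^{m}\kappa_s+\sum_{t=j}^{m-2}\kappa_t$, with the second sum stopping at $m-2$;
\item for $j=m$, $\varepsilon_i+\varepsilon_m=\sum_{s=i}^{m-2}\alpha_s+\alpha_m$, which gives $\sum_{s=i}^{m-2}\kappa_s+\kappa_m$.
\end{itemize}
Both agree with the printed expression only when $\kappa_{m-1}=0$.

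No relabeling of the spin nodes rescues this. The true dimension is invariant under $\kappa_{m-1}\leftrightarrow\kappa_m$, by the diagram automorphism, but the printed formula is not. For $m=3$ it gives $4$ at $\kappa=(0,0,1)$ and $15$ at $\kappa=(0,1,0)$, whereas both half-spin modules of $\so_6$ have dimension $4$. For $m=4$ and $\kappa=e_3$ it gives $56$ instead of $8$.

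What the printed type D expression actually computes is the Weyl formula at the type-B coordinates $\lambda_i=\sum_{s=i}^{m-1}\kappa_s+\kappa_m/2$. That is the highest weight $\sum_{s\le m-2}\kappa_s\varpi_s+\kappa_{m-1}\varpi_{m-1}+(\kappa_{m-1}+\kappa_m)\varpi_m$, not $\sum_s\kappa_s\varpi_s$.

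So your fallback of ``adjusting the convention rather than the argument'' cannot work here. The statement as printed is false in type D. What your argument proves is the corrected formula above, and you should say so explicitly rather than claim to recover the printed one. Your planned sanity check is also miscalibrated: the irreducible half-spin modules of $\so_{2m}$ have dimension $2^{m-1}$, not $2^m$.
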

For a complex simple Lie group $\G$, an irreducible $\G$-module will mean an irreducible $\mathfrak{g}$-module where $\mathfrak{g}$ is the Lie algebra of $\G$.

\section{Linear representations of $\G$-manifolds}\label{sec:man}

We establish some basic definitions and results about our main object of study, a linear representation of a $\G$-manifold $\M $. The notion has its roots in differential geometry. In differential geometric lingo \cite{Kawa, Lee_2013}, if $\mathcal{N}$ is another $\G$-manifold, then a differentiable map $\varepsilon: \M  \to \mathcal{N}$ is $\G$-equivariant if $\varepsilon(g \cdot x) = g \cdot \varepsilon(x)$ for all $g\in \G$, $x \in \M $.  If, moreover, $\varepsilon$ is an embedding, then $\M $ is called a \emph{$\G$-submanifold} of $\mathcal{N}$. In the special case where the manifold is linear, i.e., $\M  = \V$, a $\G$-module, the $\G$-equivariant embedding $\varepsilon: \G/ \H \to \V$ is called a \emph{Mostow--Palais embedding}.

What we call a linear representation imposes further restrictions on the nature of $\V$ and its $\G$-action, bringing these differential geometric notions closer to group representations:
\begin{definition}[Manifold representation]\label{def:rep}
Let $\G \subseteq  \GL_n(\F)$. A \emph{linear representation} of a $\G$-manifold $\M $ is an equivariant differentiable map   $\varepsilon: \M  \to \V$  where $ \V \subseteq \F^{n \times n}$ and $\G$ acts on $\V$ via matrix multiplication.
\end{definition}
All representations in this article, whether of manifolds, groups, or algebras, will be linear representations, i.e., taking values in $\F^{n \times n}$. Henceforth we will drop the word ``linear'' except in section headings to serve as a reminder.

The requirement that $\G \subseteq  \GL_n(\F)$ in Definition~\ref{def:rep} is strictly speaking unnecessary. For example, for any $\G$ that has an $n$-dimensional group representation $\pi : \G \to \GL_n(\F)$, we may let $\pi(\G)$ play the role of $\G$ above. Nevertheless for convenience we will assume that $\G$ is a Lie subgroup of $\GL_n(\F)$ throughout. The action via matrix multiplication $\G \times \V \to \V$ in the definition could be left or right matrix multiplication, equivalence, congruence, or similarity:
\begin{equation}\label{eq:act}
\begin{gathered}
(A, X) \mapsto A X, \quad (A, X) \mapsto X A^{-1}, \quad  (A_1,A_2, X) \mapsto A_1 X A_2^{-1}, \\
(A, X) \mapsto A X A^\tp, \quad (A, X) \mapsto A X A^{-1},
\end{gathered}
\end{equation}
but we leave open the possibility of yet other unusual actions like bicongruence $(A_1,A_2, X) \mapsto A_1 X A_2^\tp$ (which is the same as equivalence for the groups we consider in this article). The products above are all matrix multiplications so these actions would not be well-defined without the assumptions that $\G \subseteq  \GL_n(\F)$ and $ \V \subseteq \F^{n \times n}$, which will be implicit whenever we speak of a representation of a manifold.

The definition of a representation of $\G$-manifold in Definition~\ref{def:rep} is consistent with both $\G$-equivariant map in differential geometry and  representation of a group in representation theory. In fact, a manifold representation may be regarded as an extension of a group representation to $\G$-manifolds that may not be groups:
\begin{proposition}[Every group representation is a manifold representation]\label{prop:group}
Let  $\pi: \G \to \GL_n(\F) \subseteq \mathbb{F}^{n \times n}$ be a representation of a Lie group $\G$ in the sense of group representation. If we regard $\pi$ as a map $\pi: \G \to \mathbb{F}^{n \times n}$, then it is also a representation of $\G$ in the sense of manifold representation.
\end{proposition}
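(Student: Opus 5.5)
The plan is to check the three ingredients of Definition~\ref{def:rep} directly for $\pi:\G\to\F^{n\times n}$: a group acting by matrix multiplication, a target subspace of $\F^{n\times n}$, and an equivariant differentiable map. There are two small issues. First, Definition~\ref{def:rep} wants the acting group to sit inside $\GL_n(\F)$, whereas the abstract $\G$ need not. Second, the source $\G$ must be regarded as a $\G$-manifold.

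For the first issue we use the remark following Definition~\ref{def:rep}: the image $\pi(\G)\subseteq\GL_n(\F)$ plays the role of the group acting on $\V$. Concretely, $g\in\G$ acts on $\V\coloneqq\F^{n\times n}$ by left multiplication, $g\cdot X\coloneqq \pi(g)X$. This is one of the actions in \eqref{eq:act}, and it is a genuine action because $\pi$ is a homomorphism: $\pi(gh)X=\pi(g)(\pi(h)X)$ and $\pi(e)X=X$. For the second issue we make $\G$ a $\G$-manifold through left translation, $g\cdot x\coloneqq gx$. This is a smooth action because $\G$ is a Lie group.

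Equivariance is then a one-line computation:
\[
\pi(g\cdot x)=\pi(gx)=\pi(g)\pi(x)=g\cdot\pi(x).
\]
The only step needing a sentence of justification is differentiability. A continuous homomorphism between Lie groups is automatically smooth (Cartan's closed subgroup theorem, or the standard fact that continuous homomorphisms of Lie groups are analytic). So $\pi:\G\to\GL_n(\F)$ is smooth, and composing with the open inclusion $\GL_n(\F)\subseteq\F^{n\times n}$ keeps it smooth.

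Nothing here is a real obstacle. The point requiring the most care is to state precisely that the acting group is $\pi(\G)$, or equivalently $\G$ acting through $\pi$, so that the requirement in Definition~\ref{def:rep} is met. Alternatively, one could take the action to be right multiplication $X\mapsto X\pi(g)^{-1}$ together with right translation on $\G$; the verification is symmetric.
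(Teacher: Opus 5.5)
Your proposal is correct and follows the paper's proof. The paper lets $\G$ act on itself by left translation and on $\F^{n\times n}$ by $g\cdot X=\pi(g)X$, and then checks equivariance via $\pi(gx)=\pi(g)\pi(x)$; your added remarks on the smoothness of $\pi$ (given continuity) and on taking $\pi(\G)$ as the acting matrix group fill in details the paper leaves implicit.
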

\begin{proof}
Here $\G$ acts on $\M  \coloneqq \G$ via left multiplication and on $\V \coloneqq \mathbb{F}^{n \times n}$ via $\pi$, i.e., for all $g \in \G$ and $X \in \mathbb{F}^{n \times n}$, $g \cdot X = \pi(g)X$. Clearly $\pi$ is $\G$-equivariant as $\pi(g \cdot x) = \pi(g x) = \pi(g) \pi(x) = g \cdot \pi(x)$ for any $g \in \G$, $x \in \M $.
\end{proof}

The use of group representation lingo in Definition~\ref{def:rep} is suggestive, immediately leading to notions of  equivalence, faithfulness, and irreducibility for representations; minimality for faithful ones; alongside common constructions of new representations from old ones. While these are commonplace in group (and algebra) representations, to the best of our knowledge, they are absent from differential geometry.

\begin{definition}[Equivalence]\label{def:equiv}
Let $\M _1$ and $\M _2$ be $\G$-manifolds. Two representations $\varepsilon_1: \M _1 \to \V_1$ and $\varepsilon_2: \M _2 \to \V_2$ are \emph{equivalent},  denoted $\varepsilon_1 \cong \varepsilon_2$,  if there exist a diffeomorphism $\varphi: \M _1\cong \M _2$ and a $\G$-module isomorphism $\psi: \V_1 \cong \V_2$ such that $\psi \circ \varepsilon_1 = \varepsilon_2 \circ \varphi$ and 
\[
\varphi(g \cdot x) = g \cdot \varphi(x),\quad \psi(g \cdot v) = g \cdot \psi(v),\quad 
\]
for any $g \in \G$, $x \in \M _1$, and $v \in \V_1$.
\end{definition}

The notions of faithfulness and minimality are particularly important for us as a main goal of this article is to classify $\G$-manifolds that have faithful representations and explicitly find all minimal dimensional ones, thereby arriving at sharp effective bounds for their Mostow--Palais embeddings.
\begin{definition}[Faithfulness and minimality]\label{def:faith}
A representation of a $\G$-manifold  $\varepsilon: \M  \to \V$ is \emph{faithful} if it is a $\G$-equivariant embedding.  A faithful representation is \emph{minimal} if there is no faithful representation $\varepsilon': \M  \to \V'$ with $\dim_{\R} \V' < \dim_{\R} \V$.
\end{definition}
Any faithful representation is clearly a Mostow--Palais embedding; it will follow from our classification result that the converse is false. 

The notions of equivalence and faithfulness in Definitions~\ref{def:equiv} and \ref{def:faith} are also consistent with those same notions in group representation theory:
\begin{proposition}
\begin{enumerate}[\normalfont(i)]
  \item If $\pi : \G \to \GL_n(\F) \subseteq \mathbb{F}^{n \times n}$ is faithful in the sense of group representation, then, when regarded as a map $\pi: \G \to \F^{n \times n}$, it is faithful in the sense of manifold representation.
  \item If $\pi_1, \pi_2: \G \to \GL_n(\F) \subseteq \mathbb{F}^{n \times n}$ are equivalent in the sense of group representation, then, when regarded as maps $\pi_1, \pi_2 : \G \to \F^{n \times n}$, they are equivalent in the sense of manifold representation.
\end{enumerate}
\end{proposition}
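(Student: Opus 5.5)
For both parts we work in the setting of Proposition~\ref{prop:group}. We take $\M \coloneqq \G$ with $\G$ acting by left multiplication, and $\V \coloneqq \F^{n \times n}$ with $g \cdot X = \pi(g)X$. By Proposition~\ref{prop:group}, $\pi$ (respectively $\pi_1$, $\pi_2$) is already a $\G$-equivariant map, i.e.\ a manifold representation. What remains is to check the extra conditions in Definitions~\ref{def:faith} and~\ref{def:equiv}.

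For (i), faithfulness means $\ker \pi = \{e\}$, so $\pi$ is an injective Lie group homomorphism $\G \to \GL_n(\F)$. We show it is an immersion in two steps.
\begin{enumerate}[\upshape (a)]
\item The differential $d\pi_e : \g \to \mathfrak{gl}_n(\F)$ is injective. If $d\pi_e(\xi) = 0$, then $\pi(\exp t\xi) = \exp(t\, d\pi_e(\xi)) = I$ for all $t$. Injectivity of $\pi$ gives $\exp t\xi = e$ for all $t$, so $\xi = 0$.
\item Equivariance, $\pi \circ L_g = \pi(g)\pi$, transports this to every point. Left multiplication by $\pi(g)$ is a linear isomorphism of $\F^{n\times n}$, so $d\pi_g$ is injective for all $g$.
\end{enumerate}
Finally, an embedding also requires that $\pi$ be a homeomorphism onto its image. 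This is the main obstacle. An injective immersive homomorphism need not be an embedding: a dense line in a torus, composed into $\GL_n$, is a counterexample. So we would invoke the standing convention that $\G$ is a closed (embedded) Lie subgroup, as assumed throughout the paper ("$\G$ is a Lie subgroup of $\GL_n(\F)$" in the matrix sense). Equivalently, we interpret faithfulness of $\pi$ as $\pi$ being an isomorphism onto a closed subgroup $\pi(\G)$. Then the closed subgroup theorem makes $\pi(\G)$ an embedded submanifold, and a bijective immersion onto an embedded submanifold is a diffeomorphism onto it, hence an embedding. If needed, I would state this hypothesis explicitly, or note that it holds automatically when $\G$ is compact, since an injective immersion of a compact manifold is an embedding.

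For (ii), equivalence of $\pi_1$ and $\pi_2$ as group representations means there is $P \in \GL_n(\F)$ with $\pi_2(g) = P\pi_1(g)P^{-1}$ for all $g$. Here $\M_1 = \M_2 = \G$, and we take $\varphi \coloneqq \mathrm{id}_\G$, which is trivially an equivariant diffeomorphism. For $\psi$, the choice $X \mapsto PXP^{-1}$ is the one suggested by conjugation. However, with the actions $g\cdot X = \pi_i(g)X$ we need $\psi(\pi_1(g)X) = \pi_2(g)\psi(X)$. Checking this:
\[
P\pi_1(g)XP^{-1} = \pi_2(g)\,PXP^{-1},
\]
which indeed holds. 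We also need $\psi \circ \pi_1 = \pi_2 \circ \varphi$, i.e.\ $P\pi_1(g)P^{-1} = \pi_2(g)$, which is exactly the hypothesis. Since $\psi$ is a linear isomorphism of $\F^{n \times n}$, it is a $\G$-module isomorphism, and Definition~\ref{def:equiv} is satisfied.
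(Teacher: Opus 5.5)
Your part (ii) is exactly the paper's argument: $\varphi = \mathrm{id}_{\G}$, $\psi(X) = PXP^{-1}$, and the same two identities $\psi(\pi_1(g)X) = \pi_2(g)\psi(X)$ and $\psi\circ\pi_1 = \pi_2\circ\varphi$.

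For part (i), the paper's proof is the one-line assertion that a faithful Lie group representation ``is a $\G$-equivariant embedding.'' Your immersion argument goes beyond what the paper writes, and it is correct: $d\pi_e$ is injective via $\pi(\exp t\xi) = \exp(t\,d\pi_e(\xi))$, and this transports to every point by left translation.

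Your objection is also well founded. Let $R(t)$ denote rotation by angle $t$ and take $\alpha \notin \mathbb{Q}$. Then the irrational winding $t \mapsto \diag\bigl(R(t), R(\alpha t)\bigr)$ is an injective homomorphism $\R \to \GL_4(\R)$ that is not a homeomorphism onto its image. So (i) as literally stated is false, and the paper's proof has exactly the gap you located.

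The statement is repaired by assuming that $\pi(\G)$ is closed in $\GL_n(\F)$ (equivalently, that $\pi$ is a homeomorphism onto its image), or that $\G$ is compact. Your finish via the closed subgroup theorem then works. It uses, as is standard, that $\G$ is second countable, so Sard's theorem forces $\dim \G = \dim \pi(\G)$ for a bijective immersion.

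One correction: your first proposed remedy does not work, and it is not ``equivalent'' to the second. The standing convention constrains $\G$ as a subgroup of $\GL_n(\F)$, not the image of an arbitrary representation $\pi$. For example, $\G = \{\diag(e^t, e^{-t}) : t \in \R\}$ is closed in $\GL_2(\R)$. Yet $\diag(e^t,e^{-t}) \mapsto \diag\bigl(R(t), R(\sqrt{2}\,t)\bigr)$ is a faithful representation in $\GL_4(\R)$ whose image is dense in a $2$-torus and not closed. The hypothesis must be placed on $\pi(\G)$, not on $\G$.
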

\begin{proof}
If $\pi$ is a faithful Lie group representation,  then it is a $\G$-equivariant embedding of $\G$ into $\GL_n(\F)$. If $\pi_1$ and $\pi_2$ are equivalent Lie group representations, then there exists $A \in \GL_n(\F)$ such that for all $g \in \G$, $\pi_2(g) = A \pi_1(g) A^{-1}$. We verify that $\pi_1 \cong \pi_2$ by taking $\varphi = \operatorname{id}$, the identity map on $\G$, and 
\[
  \psi: \F^{n \times n} \to \F^{n \times n}, \quad X \mapsto AXA^{-1}
\]
in Definition~\ref{def:equiv}. For all $g \in \G$ and $X \in \F^{n \times n}$, 
\[
  \pi_2(g)\psi(X) = (A\pi_1(g)A^{-1})(AXA^{-1}) = A\pi_1(g)XA^{-1} = \psi(\pi_1(g)X)
\]
and $\psi \circ \pi_1(g) = A\pi_1(g)A^{-1} = \pi_2(g) \circ \varphi$.
\end{proof}

Standard constructions of new group representations from old ones extend to manifold representations almost verbatim, as Definitions~\ref{def:sum+prod} and \ref{def:restricted} show.
\begin{definition}[Direct sum and tensor product]\label{def:sum+prod}
Let $\varepsilon_1: \M  \to \V_1$ and $\varepsilon_2: \M  \to \V_2$ be two representations of $\G$-manifold $\M $.  The \emph{direct sum} $\varepsilon_1 \oplus \varepsilon_2: \M  \to \V_1 \oplus \V_2$ is defined by $\varepsilon_1 \oplus \varepsilon_2(x) = \varepsilon_1(x) \oplus \varepsilon_2(x)$. The \emph{tensor product} $\varepsilon_1 \otimes \varepsilon_2: \M  \to \V_1 \otimes \V_2$ is defined by $\varepsilon_1 \otimes \varepsilon_2(x) = \varepsilon_1(x) \otimes \varepsilon_2(x)$.
\end{definition}
The equivalence of representations is not necessarily preserved by direct sums (see Example~\ref{ex:inter}) or tensor products.

Given a manifold representation $\varepsilon: \M  \to \V$ of a $\G$-manifold, we may limit it in three ways: to $\mathcal{N} \subseteq \M $, to $\H \subseteq \G$, or to $\V' \subseteq \V$; each gives a meaningful manifold representation.
\begin{definition}[Restricted, restrained, sub-, and complementary representations]\label{def:restricted}
Let $\varepsilon: \M  \to \V$ be a representation.   If $\mathcal{N} \subseteq \M $ is a $\G$-submanifold, then $\varepsilon\vert_{\mathcal{N}}$ is the \emph{restricted representation} of $\mathcal{N}$. If $\H \subseteq \G$ is a closed subgroup, then $\varepsilon$ is \emph{restrained representation} of $\M $ viewed as a $\H$-manifold.  If $\V' \subseteq \V$ is a $\G$-submodule, then $\varepsilon' = p' \circ \varepsilon$ is the \emph{subrepresentation} of $\M $ in $\V'$. If $\V'$ has a complementary subspace $\V''$, i.e., $\V = \V' \oplus \V''$, that is also a $\G$-submodule, then $\varepsilon'' = p'' \circ \varepsilon$ is the \emph{complementary representation} of $\varepsilon'$. Here $p'$ and $p''$ denote the orthogonal projections onto $\V'$ and $\V''$ respectively.
\end{definition}

The notions of irreducibility and indecomposability in group representations readily extend to manifold representations.
\begin{definition}[Irreducibility and indecomposability]\label{def:irred}
A representation $\varepsilon: \M  \to \V$ is \emph{irreducible} if $\V$ is irreducible as a $\G$-module; it is \emph{indecomposable} if $\varepsilon = \varepsilon_1 \oplus \varepsilon_2$ implies that  either $\varepsilon_1$ or $\varepsilon_2$ is the trivial representation $\M  \to \{0\}$. 
\end{definition}

Decompositions into indecomposables for arbitrary $\G$ and irreducibles for reductive $\G$ likewise extend to manifold representations.
\begin{proposition}[Complete decomposability and reducibility]\label{prop:decomp-irrep}
Every representation of a $\G$-manifold $\varepsilon: \M  \to \V$ can be written as $\varepsilon = \varepsilon_1 \oplus \dots \oplus \varepsilon_k$,  where each $\varepsilon_j: \M  \to \V_j$ is an indecomposable representation of $\M $ and $\V = \V_1 \oplus \dots \oplus \V_k$ is a unique decomposition up to isomorphism. If, furthermore, $\G \subseteq \GL_n(\F)$ is reductive, then $\varepsilon_1,\dots,\varepsilon_k$ can be chosen to be irreducible.
\end{proposition}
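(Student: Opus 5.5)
The plan is to transfer the statement from manifold representations to the $\G$-module $\V$ itself. The key observation is that a direct sum decomposition $\V = \V_1 \oplus \dots \oplus \V_k$ into $\G$-submodules always induces a decomposition of $\varepsilon$. Let $p_j : \V \to \V_j$ be the projection along $\bigoplus_{i\ne j}\V_i$. Since the complement is $\G$-stable, $p_j$ is $\G$-equivariant. We can then take $\varepsilon_j \coloneqq p_j\circ\varepsilon$, which is a differentiable equivariant map $\M \to \V_j$, i.e.\ a subrepresentation in the sense of Definition~\ref{def:restricted}. Moreover $\varepsilon = \varepsilon_1\oplus\dots\oplus\varepsilon_k$ as in Definition~\ref{def:sum+prod}, because $x\mapsto \sum_j p_j\varepsilon(x)$ recovers $\varepsilon(x)$ under $\V\cong\bigoplus_j\V_j$. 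One caveat: Definition~\ref{def:restricted} speaks of orthogonal projections. I would either choose a $\G$-invariant inner product when one exists, or simply use the equivariant projection along the complementary submodule, which is the correct notion in general.

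Existence of an indecomposable decomposition follows by induction on $\dim\V<\infty$. Suppose $\varepsilon$ is not indecomposable. Then $\varepsilon=\varepsilon'\oplus\varepsilon''$ with both factors nontrivial, so $\V=\V'\oplus\V''$ with both summands nonzero and of strictly smaller dimension, and we recurse. The induction terminates because dimension drops at each step.

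For the uniqueness clause I would invoke the Krull--Schmidt theorem for finite-dimensional $\G$-modules, or equivalently modules over the group algebra: the decomposition of $\V$ into indecomposable submodules is unique up to isomorphism and permutation. The delicate point, and the one I expect to be the main obstacle, is reconciling indecomposability of $\varepsilon_j$ in Definition~\ref{def:irred} with indecomposability of $\V_j$ as a module. Definition~\ref{def:irred} only forbids a nontrivial splitting of the map. A module-level splitting $\V_j=\V'\oplus\V''$ could make one component, say $p''\circ\varepsilon_j$, identically zero; in that case $\varepsilon_j$ is still ``decomposable'' with a trivial-map factor $\M\to\{0\}$ but a nonzero target.

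To avoid this, I would state and prove the uniqueness for the module decomposition. I would refine the decomposition until each $\V_j$ is an indecomposable $\G$-module. Each $\varepsilon_j$ is then automatically indecomposable as a representation, since any splitting of $\varepsilon_j$ yields a module splitting of $\V_j$, forcing one summand to be $\{0\}$. Krull--Schmidt then gives uniqueness of $\V=\V_1\oplus\dots\oplus\V_k$ up to isomorphism.

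For reductive $\G$, every finite-dimensional $\G$-module is completely reducible: for compact groups by averaging an inner product, and for the reductive groups considered here by Weyl's unitary trick. Hence each indecomposable $\V_j$ is irreducible. By Definition~\ref{def:irred}, $\varepsilon_j$ is then irreducible, which completes the argument.
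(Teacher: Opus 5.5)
Your proposal is correct and follows essentially the same route as the paper. Both take a Krull--Schmidt decomposition $\V=\V_1\oplus\dots\oplus\V_k$ into indecomposable $\G$-submodules and set $\varepsilon_j \coloneqq p_j\circ\varepsilon$. Both then note that any splitting of $\varepsilon_j$ splits the indecomposable module $\V_j$, so one factor lands in $\{0\}$, and finish with complete reducibility in the reductive case.

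One small remark on that last step: what it actually uses is complete reducibility of $\V$, not reductivity of $\G$ alone. The paper makes this switch to \emph{linearly} reductive $\G$ without comment. Your restriction to compact groups and to the classical groups under consideration supplies exactly this.
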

\begin{proof}
By the Krull--Schmidt theorem \cite[Theorem~12.9]{AF1992}, the finite-dimensional $\G$-module $\V$ admits an indecomposable decomposition $\V = \V_1 \oplus \dots \oplus \V_k$ that is unique up to isomorphism. We set $\varepsilon_j \coloneqq p_j \circ \varepsilon$ where $p_j: \V \to \V_j$ is the projection, $ j =1,\dots, k$. Suppose $\varepsilon_j = \varepsilon_j' \oplus \varepsilon_j''$ where $\varepsilon_j': \M  \to \V_j'$ and $\varepsilon_j'': \M  \to \V_j''$ with $\V_j = \V_j' \oplus \V_j''$. As $\V_j$ is indecomposable, either $\V_j'$ or $\V_j'' = \{0\}$, i.e., $\varepsilon_j'$ or $\varepsilon_j''$ is trivial. So $\varepsilon_j$ is indecomposable. For linearly reductive $\G$, the decomposition of $\V$ may be chosen to be irreducible \cite[Section~7.3]{Procesi} and in which case $\varepsilon_j$ is irreducible.
\end{proof}

A representation of a $\G$-manifold may be decomposed in yet another sense, namely, via the $\G$-orbits in $\M $. For any $x \in \M $, we denote its orbit and stabilizer under the action of $\G$ by
\[
\G\cdot x \coloneqq \{g\cdot x\in \M : g\in \G\},\quad \G_x \coloneqq \{g \in \G: g\cdot x = x\}.
\]
The following corollary of Proposition~\ref{prop:decomp-irrep} shows that a representation of a $\G$-manifold is completely determined by its restrictions to orbits.

\begin{corollary}[Characterization by orbits]\label{cor:orbits}
Let $\M $ be a $\G$-manifold and $\varepsilon: \M  \to \V$ any nonzero differentiable map.  Then $\varepsilon$ is a representation of $\M $ if and only if for every $x \in \M $, the restriction $\varepsilon\vert_{\G \cdot x}: \G \cdot x \to \V$ is a representation of the orbit $\G \cdot x$. Furthermore, $\varepsilon$ is irreducible if and only if $\varepsilon\vert_{\G \cdot x}$ is irreducible for every $x \in \M $.
\end{corollary}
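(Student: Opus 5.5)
The first claim follows straight from Definition~\ref{def:rep}, applied pointwise. A representation is just a $\G$-equivariant differentiable map $\varepsilon:\M\to\V$, and equivariance, $\varepsilon(g\cdot x)=g\cdot\varepsilon(x)$ for all $g\in\G$ and $x\in\M$, only ever compares $\varepsilon$ at two points of the same orbit. The plan is therefore to show that the single global condition is the conjunction of the orbitwise conditions.

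\emph{Forward direction.} Each orbit $\G\cdot x$ is $\G$-invariant, and we regard it as a $\G$-submanifold. This is automatic when $\G$ is compact, and otherwise it is the standing convention under which Definition~\ref{def:restricted} is applied. Then $\varepsilon\vert_{\G\cdot x}$ is precisely the restricted representation of Definition~\ref{def:restricted}. Equivariance on $\G\cdot x$ is inherited from $\M$. Differentiability holds because the orbit map $\G/\G_x\to\M$ is an immersion, so the composite $\G/\G_x\to\M\to\V$ is smooth.

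\emph{Reverse direction.} Take any $g\in\G$ and $x\in\M$. Both $x$ and $g\cdot x$ lie in $\G\cdot x$, so equivariance of $\varepsilon\vert_{\G\cdot x}$ gives $\varepsilon(g\cdot x)=g\cdot\varepsilon(x)$. Since $\varepsilon$ is assumed differentiable on all of $\M$ from the outset, no gluing of smooth structures across orbits is needed. This proves the first claim.

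For irreducibility we use Definition~\ref{def:irred}, under which irreducibility depends only on the $\G$-module $\V$. Here I read $\varepsilon\vert_{\G\cdot x}$ as having the same codomain $\V$. With that reading, $\varepsilon$ is irreducible exactly when $\V$ is an irreducible $\G$-module, which is exactly when each $\varepsilon\vert_{\G\cdot x}$, sharing that codomain, is irreducible. The equivalence is then immediate. The nonzero hypothesis guarantees that $\V\neq\{0\}$, so the statement is not vacuous.

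The main obstacle is that one might instead want the intended reading to use the smallest $\G$-submodule spanned by $\varepsilon(\G\cdot x)$, in which case the statement would require a genuine argument. In that version, the direction ``$\varepsilon$ irreducible $\Rightarrow$ restrictions irreducible'' goes as follows. The span of $\varepsilon(\G\cdot x)$ is $\G$-invariant, so it is either $0$ or $\V$. Using Proposition~\ref{prop:decomp-irrep} and the nonzero hypothesis, I would then argue that it cannot be zero on any orbit, since otherwise $\varepsilon$ would split off a trivial piece. The converse direction would use the same decomposition. I expect this point, deciding the codomain convention and handling orbits on which $\varepsilon$ vanishes, to be the only delicate step. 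I will adopt the codomain-$\V$ convention, which matches Definition~\ref{def:restricted}.
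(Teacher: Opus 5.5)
Your argument is correct, but it does not follow the route the paper indicates. The paper gives no written proof. It presents the statement as a corollary of Proposition~\ref{prop:decomp-irrep}, i.e.\ of the Krull--Schmidt/complete-reducibility decomposition $\varepsilon=\varepsilon_1\oplus\dots\oplus\varepsilon_k$. You never use that decomposition and argue straight from Definitions~\ref{def:rep} and \ref{def:irred}. Equivariance only relates two points of a single orbit, so it holds on $\M$ iff it holds on every orbit; differentiability of $\varepsilon$ is given globally. Irreducibility is by definition a property of the codomain $\V$ alone, which $\varepsilon$ and every $\varepsilon\vert_{\G\cdot x}$ share. Your route is self-contained and shows the decomposition plays no role in the equivalences as stated. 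The decomposition viewpoint would only matter for a stronger claim the corollary does not make, e.g.\ that $\varepsilon\vert_{\G\cdot x}=\varepsilon_1\vert_{\G\cdot x}\oplus\dots\oplus\varepsilon_k\vert_{\G\cdot x}$ is again a decomposition into irreducibles.

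Three small corrections follow.

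\emph{Orbit structure.} No convention about orbits being embedded $\G$-submanifolds is needed; for noncompact $\G$ they need not be embedded. Giving $\G\cdot x$ the smooth structure of $\G/\G_x$, as your next sentence effectively does, already makes it a $\G$-manifold on which $\varepsilon\vert_{\G\cdot x}$ is smooth and equivariant.

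\emph{Nonzero hypothesis.} What it really supplies is $\M\neq\emptyset$. Without that, the orbitwise condition is vacuously true while $\V$ need not be irreducible.

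\emph{The ``span'' reading.} Drop your sketch for it: that version of the statement is false, so no argument can rescue it. Take $\SO_n(\R)$ acting on $\M=\V=\R^n$ with $\varepsilon=\mathrm{id}$. Then $\varepsilon$ is irreducible, yet the orbit $\{0\}$ is mapped to $0$. This directly contradicts your claim that $\varepsilon$ cannot vanish on an orbit. So the codomain-$\V$ reading you adopted is the only one under which the corollary holds, not merely a convenient convention.
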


When discussing a representation $\varepsilon: \M  \to \V$, there are two $\G$-actions involved, one on the $\G$-manifold $\M $, and another on the $\G$-module $\V$. To avoid ambiguity, and since the $\G$-action on $\M $ is transitive, in the remainder of this article we will assume $\M  = \G/\H$ with $\H$ a closed subgroup of $\G$.  Henceforth, whenever we speak of a $\G$-action, $\G$-orbit,  or $\G$-stabilizer, it will exclusively refer that of the $\G$-module $\V$.

\section{Linear representations of homogeneous spaces}\label{sec:homo}

By the discussion at the end of the last section, we limit our attention to $\G$-manifolds that are homogeneous spaces of the form $\G/\H$.  Corollary~\ref{cor:orbits} ensures that representations respect this quotient structure. It turns out that we may  further narrow our focus to the following kind of homogeneous spaces:
\begin{definition}[Irreducibly faithfully representable spaces]
  We say that $\G/\H$ is \emph{irreducibly faithfully representable} if, for any closed subgroups $\H_1$, $\H_2$ with
\[
\H \subseteq \H_1 \subseteq \G, \quad \H \subseteq \H_2 \subseteq \G,
\]
and $\G/\H_1$, $\G/\H_2$ both admit faithful representations, then $\H = \H_1 \cap \H_2$ implies that $\H = \H_1$ or $\H = \H_2$.
\end{definition}
Irreducibly faithfully representable homogeneous spaces admit an alternative characterization.

\begin{proposition}[Irreducibly faithfully representability]
A homogeneous space $\G/\H$ is irreducibly faithfully representable if and only if for any faithful representation of the form $\varepsilon = (\varepsilon_1,\dots,  \varepsilon_k): \G/\H \to \V = \V_1 \oplus \dots \oplus \V_k$,  where $\varepsilon_1,\dots,  \varepsilon_k$ are representations of $\G/\H$,  there exists a faithful $\varepsilon_j$ for some $j \in \{1,\dots k\}$. 
\end{proposition}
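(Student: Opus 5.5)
The plan is to reduce both directions to one dictionary between representations of $\G/\H$ and their stabilizers. An equivariant map $\varepsilon:\G/\H\to\V$ is determined by $v\coloneqq\varepsilon(e\H)$, which must satisfy $\H\subseteq\G_v$, and $\varepsilon(g\H)=g\cdot v$. Hence $\varepsilon$ factors through $\G/\H\to\G/\G_v$ followed by the orbit map $\G/\G_v\to\G\cdot v\subseteq\V$.

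\emph{Key lemma.} $\varepsilon$ is injective if and only if $\G_v=\H$. For the embedding property, I would use that for a closed orbit, or whenever the orbit map is an injective immersion that is proper onto its image, the orbit map $\G/\G_v\to\V$ is an embedding. The cleanest route is to take "faithful" to mean that $\varepsilon$ is an embedding. Then for any representation $\varepsilon_j$ with $\varepsilon_j(e\H)=v_j$, the subgroup $\H_j\coloneqq\G_{v_j}\supseteq\H$ is closed, and $\varepsilon_j$ descends to an injective equivariant map $\G/\H_j\to\V_j$. I expect verifying that this descended map is an embedding, so that $\G/\H_j$ admits a faithful representation, to be the main obstacle. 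I would first try the standard fact that an equivariant injective immersion of a homogeneous space whose image is an orbit of a linear algebraic action is an embedding, since orbits of algebraic actions are locally closed. Failing that, I would restrict to the setting the paper evidently intends, in which faithful simply means $\G_v=\H$, and the orbit map is then an immersion.

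\emph{($\Rightarrow$).} Suppose $\varepsilon=(\varepsilon_1,\dots,\varepsilon_k)$ is faithful. Then $\H=\G_v=\bigcap_j \G_{v_j}=\bigcap_j\H_j$, and each $\G/\H_j$ is faithfully representable via the descended $\varepsilon_j$. I would induct on $k$. Set $\H'\coloneqq\H_2\cap\dots\cap\H_k$; then $\G/\H'$ is faithfully representable via $(\varepsilon_2,\dots,\varepsilon_k)$ descended. Since $\H=\H_1\cap\H'$, irreducible faithful representability gives $\H=\H_1$, in which case $\varepsilon_1$ is faithful, or $\H=\H'$, in which case we apply induction to $(\varepsilon_2,\dots,\varepsilon_k)$ as a faithful representation of $\G/\H$.

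\emph{($\Leftarrow$).} Let $\H\subseteq\H_1,\H_2$ with $\H=\H_1\cap\H_2$, and let $\phi_i:\G/\H_i\to\V_i$ be faithful. Compose each $\phi_i$ with the projection $\G/\H\to\G/\H_i$ to obtain representations $\varepsilon_i$ of $\G/\H$. The direct sum $\varepsilon_1\oplus\varepsilon_2$ has stabilizer $\H_1\cap\H_2=\H$ at $e\H$, so it is injective and, by the key lemma, faithful. By hypothesis some $\varepsilon_i$ is faithful, hence injective on $\G/\H$. But $\varepsilon_i$ factors through $\G/\H_i$, so $\H=\H_i$.
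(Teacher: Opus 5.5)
Your argument is correct and follows the paper's proof. Both directions pass through the stabilizers $\H_j=\G_{v_j}\supseteq\H$ with $\H=\bigcap_j\H_j$, and the converse assembles $(\varepsilon_1,\varepsilon_2)$ from the pulled-back faithful representations of $\G/\H_1$ and $\G/\H_2$. Your induction on $k$ spells out what the paper compresses into ``by definition,'' and the embedding issue you flag is the one the paper absorbs into Lemma~\ref{lem:embedding}; the same locally-closed-orbit fact is also what makes the direct sum in your converse an embedding, not only each descended map $\G/\H_j\to\V_j$ in the forward direction.
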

\begin{proof}
Let $\G/\H$ be irreducibly faithfully representable.  If $\varepsilon$ is faithful, then there are closed subgroups $\H_1,\dots,  \H_k$ containing $\H$,  such that each $\varepsilon_j: \G/\H \to \V_j$ factors through some faithful representation $\eta_j: \G/\H_j \to \V_j$,  and $\H =  \H_1 \cap \dots \cap \H_k$.  By definition,  $\varepsilon_j$ must be faithful for some $j \in \{1,\dots, k\}$.  Conversely,  let $\H_1,\H_2$ be two closed subgroups of $\G$ such that $\H = \H_1 \cap \H_2$ and that $\G/\H_i$ admits a faithful representation $\varepsilon_i: \G/\H_i \to \V_i$, $i = 1,2$.
These induce a faithful representation $\varepsilon = (\varepsilon_1,\varepsilon_2): \G/\H = \G/(\H_1 \cap \H_2) \to \V_1 \oplus \V_2$. By assumption,  either $\varepsilon_1$ or $\varepsilon_2$ must be faithful,  so either $\H = \H_1$ or $\H = \H_2$.
\end{proof}

\subsection{Mostow--Palais embeddings of homogeneous spaces}\label{sec:orb}

While we are ultimately interested in faithful representations,  the results in this section hold true more generally for Mostow--Palais embeddings and we will state them as such. In other words, here we study $\G$-equivariant embeddings $\varepsilon: \G/ \H \to \V$ with no further assumption on $\V$ other than that it is a $\G$-module.  Our first observation is that any Mostow--Palais embedding of $\G/ \H$ in $\V$ must be a $\G$-orbit:
\begin{lemma}\label{lem:embedding}
There exists a Mostow--Palais embedding $\varepsilon:  \G/ \H \to \V$ if and only if there exists $v \in \V$ such that $\H = \G_v$.  In this case, we must have $\varepsilon( \H ) = v$.
\end{lemma}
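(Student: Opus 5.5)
The proof splits into the two implications. In both, the key object is the orbit map $g \mapsto g\cdot v$, and the key relation is $\varepsilon(g\H) = g\cdot\varepsilon(\H)$, which follows from equivariance since $g\H = g\cdot \H$ in $\G/\H$.

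For the forward direction, let $\varepsilon:\G/\H\to\V$ be a Mostow--Palais embedding and put $v \coloneqq \varepsilon(\H)$, the image of the identity coset. For $g\in\G$, equivariance gives $g\cdot v = \varepsilon(g\H)$. Since $\varepsilon$ is injective, $g\cdot v = v$ holds if and only if $g\H = \H$, that is, if and only if $g\in\H$. Hence $\G_v = \H$. The same computation gives the final assertion: whatever embedding we start with, the point $\varepsilon(\H)$ has stabilizer exactly $\H$, and $\varepsilon$ is completely determined by it through $\varepsilon(g\H) = g\cdot\varepsilon(\H)$.

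For the converse, suppose $v\in\V$ satisfies $\G_v = \H$, and define $\varepsilon(g\H)\coloneqq g\cdot v$.
\begin{enumerate}[\upshape (a)]
\item The map is well defined, because $g\H = g'\H$ means $g^{-1}g'\in\H=\G_v$, so $g\cdot v = g'\cdot v$.
\item It is injective by the same equivalence read in reverse.
\item It is $\G$-equivariant by construction.
\item It is smooth, because the orbit map $\G\to\V$, $g\mapsto g\cdot v$, is smooth and factors through the submersion $\G\to\G/\H$.
\item It is an immersion. By equivariance, its differential at $g\H$ is conjugate, via the linear map $v\mapsto g\cdot v$ on $\V$ and left translation by $g$ on $\G/\H$, to its differential at $\H$. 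So $\varepsilon$ has constant rank, and an injective constant-rank map is an immersion by the constant rank theorem.
\end{enumerate}
Alternatively, one can compute the kernel of the differential at $\H$ directly: it consists of the $X\in\mathfrak g$ with $X\cdot v=0$, which is exactly the Lie algebra of $\G_v = \H$.

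The main obstacle is the last requirement, that $\varepsilon$ be a homeomorphism onto its image $\G\cdot v$, i.e., that the orbit is an embedded rather than merely an immersed submanifold. This is false for arbitrary Lie group actions; a one-parameter subgroup acting by irrational rotations on $\C^2$ has dense, non-embedded orbits. We would settle it using the standing setting of the paper.
\begin{enumerate}[\upshape (a)]
\item For the groups under consideration ($\SL_n(\F)$, $\SO_n(\F)$, $\Sp_{2n}(\F)$, and their real forms) acting algebraically on $\V$, orbits are locally closed by the Chevalley constructibility theorem, applied to real points where necessary.
\item For compact $\G$, orbits are compact.
\end{enumerate}
In either case, a locally closed orbit of a Lie group action is an embedded submanifold: it is locally compact, so Baire category arguments show $\G/\G_v\to\G\cdot v$ is open. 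The result is then a Mostow--Palais embedding with $\varepsilon(\H)=v$. We would state this tacit regularity of the action (algebraic or compact) explicitly when writing the proof.
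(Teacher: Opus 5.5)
The paper states this lemma as an ``observation'' and gives no proof, so there is no argument of the authors' to compare against. Your proof is correct, and it is the standard orbit-map argument the paper tacitly relies on.

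The forward direction, the construction $g\H\mapsto g\cdot v$, and the immersion step are all fine. For the immersion you may use constant rank by equivariance, or directly $\{X\in\mathfrak g : X\cdot v=0\}=\mathrm{Lie}(\G_v)=\mathfrak h$. Your reading of the last sentence of the lemma is also the right one: $v$ may be taken to be $\varepsilon(\H)$, and then $\varepsilon(g\H)=g\cdot\varepsilon(\H)$. For a prescribed $v\neq 0$ the literal claim fails, since $g\H\mapsto g\cdot 2v$ is another Mostow--Palais embedding.

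What your write-up adds over the paper is the recognition that the ``if'' direction is not formal. Your example is a genuine counterexample to the lemma as stated, where there is ``no further assumption on $\V$ other than that it is a $\G$-module''. Take $\R$ acting on $\C^2$ by $t\cdot(z_1,z_2)=(e^{it}z_1,e^{i\alpha t}z_2)$ with $\alpha$ irrational. Then $v=(1,1)$ has trivial stabilizer. But any equivariant injection $\R\to\C^2$ is $t\mapsto t\cdot w$ with both coordinates of $w$ nonzero, hence a dense winding of a torus. So no Mostow--Palais embedding of $\R/\{0\}$ exists. The regularity hypothesis you add is therefore genuinely needed. It is satisfied in every application the paper makes of the lemma: these go through Lemmas~\ref{lem:min-sum}, \ref{lem:comp-real}, \ref{lem:bijection} and Proposition~\ref{prop:min-sum}, all ultimately for the groups in \eqref{eq:G} acting on finite-dimensional modules.

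Two points to tighten when you write it up.

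First, for the real groups, Chevalley's theorem does not apply verbatim to real points, because the real points of a complex orbit can split into several real orbits. Argue instead as follows. The orbit $\G\cdot v$ is semialgebraic by Tarski--Seidenberg, so $\dim(\overline{\G\cdot v}\setminus\G\cdot v)<\dim\G\cdot v$. Hence some point of the orbit has a neighborhood meeting $\overline{\G\cdot v}$ only inside the orbit, and homogeneity then makes the orbit open in its closure.

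Second, the Baire-category argument that $\G/\G_v\to\G\cdot v$ is open uses that $\G$ is $\sigma$-compact, i.e.\ has countably many components. This is automatic for the groups in \eqref{eq:G}. In the compact case you can skip it altogether: a continuous injection of the compact space $\G/\H$ into the Hausdorff space $\V$ is a homeomorphism onto its image.
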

We next show that Mostow--Palais embeddings respect direct sums.
\begin{lemma}\label{lem:min-sum}
Let $\V_1$ and $\V_2$ be $\G$-modules.  A map $\varepsilon: \G/\H \to \V_1 \oplus \V_2$ is a Mostow--Palais embedding if and only if $\varepsilon = \overline{\varepsilon}_1 \oplus \overline{\varepsilon}_2$ for some Mostow--Palais embeddings $\varepsilon_i: \G/\H_i \to \V_i$, $i =1,2$,  where $\H_1,\H_2 \subseteq \G$ are closed subgroups such that $\H_1 \cap \H_2 = \H$, and $\overline{\varepsilon}_i: \G/\H \to \V_i$ is $\varepsilon_i$ composed with the quotient map $ \G/\H \to  \G/\H_i$, $i = 1,2$. 
\end{lemma}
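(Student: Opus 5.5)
The plan is to reduce everything to Lemma~\ref{lem:embedding}, which identifies a Mostow--Palais embedding of $\G/\H$ with the orbit map of a vector whose stabilizer is exactly $\H$. The whole argument then rests on the fact that the stabilizer of a pair is the intersection of the stabilizers: for $(v_1,v_2) \in \V_1 \oplus \V_2$ with the diagonal action,
\[
\G_{(v_1,v_2)} = \G_{v_1} \cap \G_{v_2}.
\]

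For the forward direction, I start from a Mostow--Palais embedding $\varepsilon: \G/\H \to \V_1 \oplus \V_2$. By Lemma~\ref{lem:embedding}, $\varepsilon(g\H) = g\cdot(v_1,v_2)$, where $(v_1,v_2) \coloneqq \varepsilon(\H)$ has stabilizer $\H$. I set $\H_i \coloneqq \G_{v_i}$, $i = 1,2$. Each $\H_i$ is closed, since it is the stabilizer of a point under a continuous linear action, and it contains $\H$. The displayed identity gives $\H_1 \cap \H_2 = \H$.

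Next I define $\varepsilon_i: \G/\H_i \to \V_i$ by $g\H_i \mapsto g\cdot v_i$. This map is well defined and $\G$-equivariant, and it is exactly the orbit map of $v_i$. By the converse half of Lemma~\ref{lem:embedding}, applied to $\G/\H_i$ with $\H_i = \G_{v_i}$, the orbit map is a Mostow--Palais embedding. The proof of that lemma presumably shows the orbit map $\G/\G_v \to \V$ is an embedding, so I invoke it rather than reprove it. Finally, composing with the quotient $\G/\H \to \G/\H_i$ gives $\overline{\varepsilon}_i(g\H) = g\cdot v_i = p_i(\varepsilon(g\H))$. Hence $\varepsilon = \overline{\varepsilon}_1 \oplus \overline{\varepsilon}_2$.

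For the converse, I am given embeddings $\varepsilon_i$ of $\G/\H_i$ with $\H_1 \cap \H_2 = \H$. Lemma~\ref{lem:embedding} gives $v_i \coloneqq \varepsilon_i(\H_i)$ with $\G_{v_i} = \H_i$. The map $\overline{\varepsilon}_1 \oplus \overline{\varepsilon}_2$ sends $g\H \mapsto g\cdot(v_1,v_2)$, so it is equivariant and is the orbit map of $(v_1,v_2)$. Its stabilizer is $\H_1 \cap \H_2 = \H$, so Lemma~\ref{lem:embedding} again shows this map is a Mostow--Palais embedding of $\G/\H$.

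The main obstacle is topological rather than algebraic. An injective immersion onto an orbit need not be an embedding, since orbits can fail to be embedded submanifolds; the stabilizer identity alone only yields an injective equivariant immersion. I handle this by delegating to Lemma~\ref{lem:embedding}, whose "if" direction must already supply it, for instance by the compactness of $\G$ or by properness of the orbit.
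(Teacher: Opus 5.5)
Your proposal is correct and follows essentially the same route as the paper. In both directions you reduce to Lemma~\ref{lem:embedding} using the identity $\G_{(v_1,v_2)} = \G_{v_1} \cap \G_{v_2}$: in one direction you set $\H_i \coloneqq \G_{v_i}$, and in the other you recognize $\overline{\varepsilon}_1 \oplus \overline{\varepsilon}_2$ as the orbit map of $(v_1,v_2)$. Like the paper, you leave the topological point (that such an orbit map is an embedding and not merely an injective immersion) to Lemma~\ref{lem:embedding}, which is legitimate since that lemma is taken as given.
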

\begin{proof}
Let $(v_1,v_2) \in \V_1 \oplus \V_2$ and $g \in \G$. Then
\begin{equation}\label{eq:orb-cap}
\G_{(v_1,v_2)} = \G_{v_1} \cap \G_{v_2}.
\end{equation}
Let $\varepsilon = \overline{\varepsilon}_1 \oplus \overline{\varepsilon}_2$ as in the statement of the lemma.  By Lemma~\ref{lem:embedding}, there exist $v_1 \in \V_1$ and $v_2 \in \V_2$ such that $\H_1 = \G_{v_1}$ and $\H_2 = \G_{v_2}$.  So $\H = \G_{(v_1,v_2)}$ by \eqref{eq:orb-cap}. Applying Lemma~\ref{lem:embedding} again,  we conclude that $\varepsilon$ is a Mostow--Palais embedding as $\varepsilon(\H) = (\varepsilon_1(\H_1), \varepsilon_2(\H_2)) = (v_1,v_2)$.

For the converse, let $\varepsilon: \G/\H \to \V_1 \oplus \V_2$ be a Mostow--Palais embedding.  By Lemma~\ref{lem:embedding}, there exist $v_1 \in \V_1$ and $v_2 \in \V_2$ such that $\H = \G_{(v_1,v_2)}$. Let $\H_1 = \G_{v_1}$ and $\H_2 = \G_{v_2}$.  Then $\H = \H_1 \cap \H_2$ by \eqref{eq:orb-cap}. Applying Lemma~\ref{lem:embedding} again,  we obtain Mostow--Palais embeddings $\varepsilon_i: \G/\H_i \to \V_i$, $i =1,2$. By construction,  we have $\varepsilon(\H) = (v_1,v_2) = (\varepsilon_1(\H_1),\varepsilon_2(\H_2))$. Hence $\varepsilon = \overline{\varepsilon}_1 \oplus \overline{\varepsilon}_2$. 
\end{proof} 

By repeatedly applying Lemma~\ref{lem:min-sum},  we obtain the following.
\begin{proposition}\label{prop:min-sum}
Let $\G \subseteq \GL_n(\F)$ be a linearly reductive Lie group and let $\V$ be a $\G$-module.  A map $\varepsilon: \G/\H \to \V$ is a Mostow--Palais embedding if and only if $\varepsilon = \overline{\varepsilon}_1 \oplus \dots \oplus \overline{\varepsilon}_k$ where
\begin{enumerate}[\upshape (i)]
\item $\H_1,\dots,\H_k$ are closed subgroups of $\G$ and $\H = \H_1 \cap \dots \cap \H_k $; 

\item $\V_1,\dots,\V_k$ are irreducible $\G$-modules and $\V = \V_1 \oplus \dots \oplus \V_k$;

\item $\varepsilon_i : \G/\H_i \to \V_i$ is a Mostow--Palais embedding, $i=1,\dots, k$;

\item$\overline{\varepsilon}_i: \G/\H \to \V_i$ is $\varepsilon_i$ composed with the quotient map $ \G/\H \to  \G/\H_i$, $i = 1,\dots,k$.
\end{enumerate}
\end{proposition}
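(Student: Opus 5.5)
The plan is to combine complete reducibility of $\V$ with an induction on the number of irreducible summands, applying Lemma~\ref{lem:min-sum} at each step. Since $\G$ is linearly reductive, the decomposition in Proposition~\ref{prop:decomp-irrep} (with \cite[Section~7.3]{Procesi}) gives
\[
\V = \V_1 \oplus \dots \oplus \V_k
\]
with each $\V_i$ an irreducible $\G$-module. This decomposition is fixed once and for all, and the argument inducts on $k$. When $k=1$ the statement is a tautology: take $\H_1=\H$ and $\varepsilon_1=\varepsilon$, so that $\overline{\varepsilon}_1=\varepsilon$.

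\emph{Forward direction.} Suppose $\varepsilon:\G/\H\to\V$ is a Mostow--Palais embedding and $k\ge 2$. Write $\V = \V_1 \oplus \V'$ with $\V' = \V_2\oplus\dots\oplus\V_k$. Lemma~\ref{lem:min-sum} gives closed subgroups $\H_1,\H'$ with $\H_1\cap\H'=\H$, and Mostow--Palais embeddings $\varepsilon_1:\G/\H_1\to\V_1$ and $\varepsilon':\G/\H'\to\V'$, such that $\varepsilon=\overline{\varepsilon}_1\oplus\overline{\varepsilon'}$. The module $\V'$ is a sum of $k-1$ irreducibles, so the induction hypothesis applies to $\varepsilon'$ on the homogeneous space $\G/\H'$. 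It produces closed subgroups $\H_2,\dots,\H_k$ with $\H_2\cap\dots\cap\H_k=\H'$, together with embeddings $\varepsilon_i:\G/\H_i\to\V_i$, such that $\varepsilon'$ is the sum of the $\varepsilon_i$ composed with the quotients $\G/\H'\to\G/\H_i$. Substituting, we get $\H=\H_1\cap\dots\cap\H_k$. Since the quotient maps compose, $\G/\H\to\G/\H'\to\G/\H_i$ equals $\G/\H\to\G/\H_i$, and this identifies the pullback of each $\varepsilon_i$ through $\G/\H'$ with $\overline{\varepsilon}_i$.

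\emph{Converse.} Suppose we are given data (i)--(iv). Set $\H'=\H_2\cap\dots\cap\H_k$, which is closed. By induction, $\varepsilon'$, defined as the sum of the $\varepsilon_i$ pulled back to $\G/\H'$, is a Mostow--Palais embedding into $\V'$. Lemma~\ref{lem:min-sum} then applies with $\H_1\cap\H'=\H$ and shows that $\overline{\varepsilon}_1\oplus\overline{\varepsilon'}=\varepsilon$ is a Mostow--Palais embedding. Alternatively, one can argue directly with Lemma~\ref{lem:embedding}: the point $(v_1,\dots,v_k)$ with $v_i=\varepsilon_i(\H_i)$ has stabilizer $\bigcap_i\G_{v_i}=\bigcap_i\H_i=\H$, which is the $k$-fold version of \eqref{eq:orb-cap}.

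The only real subtlety is bookkeeping. First, the irreducible decomposition must be fixed in advance, and it is reductivity that makes it available; without reductivity one would only get indecomposable summands. Second, the composite quotient maps must be checked to agree with the direct quotients $\G/\H\to\G/\H_i$, which holds because all the subgroups involved contain $\H$. No further obstacle is expected.
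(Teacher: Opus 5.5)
Your proof is correct and is essentially the paper's argument: the paper proves the proposition simply by repeatedly applying Lemma~\ref{lem:min-sum}. Your induction on the number of irreducible summands, with complete reducibility supplied by linear reductivity and the check that the quotient maps compose, is that same argument written out in full.
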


We highlight a potential pitfall here: Suppose $\varepsilon_i: \G/\H_i \to \V_i$ and $\varepsilon'_i: \G/\H'_i \to \V'_i$ are Mostow--Palais embeddings with $\varepsilon_i \cong \varepsilon'_i$, $i = 1,2$. It is possible that $\overline{\varepsilon}_1 \oplus \overline{\varepsilon}_2 \not\cong \overline{\varepsilon}_1' \oplus \overline{\varepsilon}_2'$. In other words,  for $(u_1,u_2), (v_1,v_2) \in \V_1 \oplus \V_2$ satisfying
\begin{equation}\label{eq:stab}
 \G_{u_1} \cong \G_{v_1},\quad \G_{u_2} \cong \G_{v_2}, 
\end{equation}
it is not necessarily true that $\G_{(u_1,u_2)} \cong \G_{(v_1,v_2)}$. We provide an example below
\begin{example}\label{ex:inter}
Let $\G = \SO_n(\R)$ act on $\U_1 = \U_2 = \R^n$ by left multiplication.  Let $\{e_1,\dots,  e_n\}$ be the standard basis of $\R^n$.  Then
\[
\G_{e_1} = \biggl\{\begin{bmatrix}
1 \\ & Q 
\end{bmatrix}: Q \in \SO_{n-1}(\R)  \biggr\} \cong \SO_{n-1}(\R) \cong \G_{e_n} = \biggl\{\begin{bmatrix}
  Q \\ &1
  \end{bmatrix}: Q \in \SO_{n-1}(\R)  \biggr\}.
\]
So  $(e_1,  e_1)$ and $(e_1,  e_n)$ satisfy \eqref{eq:stab}.  However,  a direct calculation shows that
\[
\G_{(e_1,e_1)} = \biggl\{\begin{bmatrix}
  1 \\ & Q
  \end{bmatrix}: Q \in \SO_{n-1}(\R)  \biggr\} \cong \SO_{n-1}(\R) 
  \not\cong 
  \left\lbrace \begin{bmatrix}
    1 \\ & U \\ &&1
    \end{bmatrix}: U \in \SO_{n-2}(\R)  \right\rbrace = \G_{(e_1,e_n)}.
\]
\end{example}
In Section~\ref{sec:orbit}, we will see that \eqref{eq:stab} is equivalent to $\G_{v_1} = A \G_{u_1} A^{-1}$, $\G_{v_2} = C \G_{u_2} C^{-1}$ for some $A, C \in \GL_n(\F)$. In which case, since
\[
\G_{(u_1,u_2)} = \G_{u_1} \cap \G_{u_2},\quad \G_{(v_1,v_2)} = A \G_{u_1} A^{-1} \cap C \G_{u_2} C^{-1},
\]
there is no guarantee that $\G_{(u_1,u_2)} \cong \G_{(v_1,v_2)}$ unless $A = C$.

Incidentally, Example~\ref{ex:inter} also shows that the homogeneous space $\SO_n(\R)/\SO_{n-2}(\R)$ is not irreducibly faithfully representable and that equivalence of representations is not preserved under direct sum. 

Lastly, we show that if $\G_\R$ is a real form of a complex $\G$, then Mostow--Palais embeddings of $\G_\R$-homogeneous spaces into complex $\G_\R$-modules are determined by those of $\G$-homogeneous spaces into $\G$-modules, important for our classification in Section~\ref{sec:red}.

\begin{lemma}[Mostow--Palais embeddings of real forms]\label{lem:comp-real}
Let $\G_\R$ be a real form of a simple complex Lie group $\G$. If $\varepsilon: \G_\R/\H \to \W$ is a Mostow--Palais embedding into a complex $\G_\R$-module $\W$, then $\W$ is a $\G$-module and there exist a closed subgroup $\H' \subseteq \G$ such that $\H = \H' \cap \G_\R$ and a Mostow--Palais embedding $\varepsilon': \G/\H' \to \W$ such that $\varepsilon= \varepsilon' \circ \iota$ where $\iota: \G_\R/\H \to \G/\H'$,  $g \H \mapsto g\H'$, is the natural inclusion.  Moreover, if $\W = \U \otimes_{\R} \C$ for some real $\G_\R$-module $\U$,  then there exist $v_1, v_2 \in \U$ such that $\H = (\G_{\R})_{v_1 + i v_2} = (\G_{\R})_{v_1} \cap (\G_{\R})_{v_2}$. 
\end{lemma}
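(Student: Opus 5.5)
By Lemma~\ref{lem:embedding}, the embedding $\varepsilon$ is determined by the vector $v \coloneqq \varepsilon(\H) \in \W$, and it satisfies $\H = (\G_\R)_v$. So the whole statement reduces to two tasks: extend the $\G_\R$-module structure on $\W$ to a $\G$-module structure, and then compare stabilizers.

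\textbf{Step 1: $\W$ is a $\G$-module.} Differentiate the action to get a real Lie algebra representation $\mathfrak{g}_\R \to \mathfrak{gl}(\W)$, where $\mathfrak{gl}(\W)$ is a complex Lie algebra. Since $\mathfrak{g} = \mathfrak{g}_\R \otimes_\R \C$, this extends $\C$-linearly to a complex representation of $\mathfrak{g}$. Integrate it to the simply connected cover $\widetilde{\G}$. For the action to descend to $\G$, the kernel of $\widetilde{\G} \to \G$ must act trivially. I would argue this by the standard real-form argument:
\begin{itemize}
\item The center is generated by elements $\exp(2\pi i H)$ with $H$ in a Cartan subalgebra, so its action is read off from weights.
\item Choose a Cartan subalgebra of $\mathfrak{g}$ that contains a maximal torus of the maximal compact subgroup of $\G_\R$.
\item The weights of $\W$ are then forced to lie in the character lattice of $\G$, because $\W$ already integrates on $\G_\R$.
\end{itemize}
This is the step I expect to be the main obstacle. $\G_\R$ need not contain the full center of $\G$; for example, $\SL_n(\R)$ does not contain the scalar matrices $\zeta I$ when $n$ is even. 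In this paper $\G$ is one of the concrete matrix groups $\SL_n(\C)$, $\SO_n(\C)$, $\Sp_{2n}(\C)$ and $\W \subseteq \C^{n\times n}$ carries an action by matrix multiplication, as in \eqref{eq:act}. So I would fall back on this concrete setting: the formulas in \eqref{eq:act} make sense verbatim for $A \in \G$, which gives the $\G$-module structure directly.

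\textbf{Step 2: the factorization through $\G/\H'$.} Set $\H' \coloneqq \G_v$, which is closed. Then
\[
\H' \cap \G_\R = (\G_\R)_v = \H.
\]
Lemma~\ref{lem:embedding} applied to $\G$ gives a Mostow--Palais embedding $\varepsilon' \colon \G/\H' \to \W$, $g\H' \mapsto g\cdot v$. The map $\iota$ is well defined and injective precisely because $\H = \H' \cap \G_\R$. Finally, $\varepsilon'(\iota(g\H)) = g\cdot v = \varepsilon(g\H)$, so $\varepsilon = \varepsilon' \circ \iota$.

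\textbf{Step 3: the realification statement.} Suppose $\W = \U \otimes_\R \C$. Write $v = v_1 + i v_2$ with $v_1, v_2 \in \U$. Since $\G_\R$ acts $\R$-linearly on $\U$ and $\C$-linearly on $\W$, we have $g\cdot v = g\cdot v_1 + i\, g\cdot v_2$, and here $g\cdot v_1$ and $g\cdot v_2$ both lie in $\U$. By uniqueness of the real and imaginary parts in $\U \oplus i\U$, $g$ fixes $v$ if and only if it fixes both $v_1$ and $v_2$. Hence
\[
\H = (\G_\R)_{v_1+iv_2} = (\G_\R)_{v_1} \cap (\G_\R)_{v_2},
\]
which is the same intersection of stabilizers as in \eqref{eq:orb-cap}.
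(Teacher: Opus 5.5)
Your Steps 2 and 3 are the paper's argument. The paper also sets $\H' = \G_v$ with $v = \varepsilon(\H)$ and checks $\varepsilon'\circ\iota = \varepsilon$. For $\W = \U \otimes_\R \C$ it cites Lemma~\ref{lem:embedding} and Proposition~\ref{prop:min-sum}, i.e.\ \eqref{eq:orb-cap}, which is your real/imaginary-part computation.

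The gap is Step~1. Falling back on $\W \subseteq \C^{n\times n}$ with an action from \eqref{eq:act} replaces the hypothesis instead of proving the claim. A Mostow--Palais embedding carries no matrix assumption, and the lemma is needed for arbitrary complex $\G_\R$-modules: the reduction in Section~\ref{sec:red} and the equality $\dim \V = d_\MP(\G/\H)$ in Theorem~\ref{thm:min} both rely on that. Even in your restricted setting, ``the formulas make sense for $A \in \G$'' is incomplete, for two reasons:
\begin{enumerate}[(i)]
\item You must show the complex subspace $\W$ is $\G$-stable. It is $\mathfrak{g}_\R$-stable, hence stable under $\mathfrak{g} = \mathfrak{g}_\R + i\,\mathfrak{g}_\R$, hence stable under the connected group $\G$.
\item Formulas such as $QXQ^\ast$ must first be rewritten holomorphically, as $QXQ^{-1}$.
\end{enumerate}

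The missing idea is that the center is no obstruction when $\G$ is simply connected. This covers $\SL_n(\C)$ and $\Sp_{2n}(\C)$, so your $\SL_n(\R)$ example is a red herring.
\begin{itemize}
\item Extend $d\rho\colon \mathfrak{g}_\R \to \mathfrak{gl}(\W)$ $\C$-linearly to $\mathfrak{g}$ and integrate it to a holomorphic $\G$-action. This agrees with the given action on $\G_\R$, since both have the same differential and $\G_\R$ is connected. No weight-lattice argument is needed.
\item For $\SO_n(\C)$, $n \ge 3$, integrate to $\mathrm{Spin}_n(\C)$ instead. The element $-1$ lies in the connected group $\mathrm{Spin}_m$ covering $\SO_m(\R) \subseteq \G_\R$, where $m = \max(p,q) \ge 2$. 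So $-1$ acts trivially, and the action descends to $\SO_n(\C)$.
\end{itemize}

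Connectedness of $\G_\R$ is genuinely required. For $p,q \ge 1$ the paper's $\SO_{p,q}$ has two components. Let $g_{11}$ be the leading $p\times p$ block of $g$.
\begin{itemize}
\item The character $g \mapsto \operatorname{sign}\det(g_{11})$ makes $\C$ a complex $\SO_{p,q}$-module.
\item This module is not the restriction of any $\SO_n(\C)$-module, because a connected perfect group has no nontrivial characters.
\item The map $g\,\SO_{p,q}^0 \mapsto \operatorname{sign}\det(g_{11})$ is a Mostow--Palais embedding of the two-point space $\SO_{p,q}/\SO_{p,q}^0$ that cannot factor as the lemma claims.
\end{itemize}
The paper's own proof does not address any of this. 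It simply asserts that irreducible complex $\G_\R$-modules are exactly irreducible $\G$-modules. So your Step~1 has to be carried out with the Lie-algebra argument above, under the assumption that $\G_\R$ is connected.
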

\begin{proof}
Since $\G_\R$ is simple, $\W$ is a completely reducible $\G_\R$-module. As irreducible complex $\G_\R$-modules are exactly irreducible $\G$-modules, $\W$, when viewed as a $\G$-module, is a direct sum of complex irreducible $\G$-modules. By Lemma~\ref{lem:embedding}, there exists $v \in \W$ such that $\H = (\G_\R)_v$ and $\varepsilon(g\H) = g\cdot v$. Let $\H' \coloneqq \G_v$. Then $\H = \H' \cap \G_\R$. Applying Lemma~\ref{lem:embedding} to the action of $\G$ on $v$ yields
\[
\varepsilon': \G/\H' \to \W \quad  g\H' \mapsto g \cdot v
\]
as a Mostow--Palais embedding. It remains check that for any $g\H \in \G_\R/\H$, $\varepsilon'\bigl(\iota(g\H)\bigr) = \varepsilon'(g\H') = g\cdot v = \varepsilon(g\H)$. 
The case of $\W = \U \otimes_\R \C$ follows from Lemma~\ref{lem:embedding} and Proposition~\ref{prop:min-sum}.
\end{proof}

\subsection{Stabilizers of $\G$-actions}\label{sec:orbit}

The homogeneous spaces of interest to us would all take the form $\G/\H$ where $\G \subseteq \GL_n(\F)$ is a Lie subgroup and $\H$ is a stabilizer of one of three standard actions: equivalence, congruence, or similarity. For any $\GL_n(\mathbb{F})$-module $\V \subseteq \F^{n \times n}$ and $X \in \V$, we have
\begin{equation}\label{eq:stabilizer}
  \G_X = \{g \in \G : g \cdot X = X\} = \GL_n(\F)_X \cap \G.
\end{equation}
We next show that the set of homogeneous spaces $\G/\H$ that has a faithful representation is in one-to-one correspondence with the set of possible stabilizers $\G_X$, $X \in \V \subseteq \F^{n \times n}$, again important for our classification in Section~\ref{sec:red}.
\begin{lemma}\label{lem:bijection}
Define an equivalence relation $\H_1 \sim \H_2$ if $\G/\H_1 \cong \G/\H_2$ and set
\[
\mathcal{E} (\G) \coloneqq \left\lbrace
\H \subseteq \G: \H \text{ closed in } \G \text{ and } \G/\H \text{ is a }\G\text{-submanifold of }\V \right\rbrace/\sim.
\]
Define another equivalence relation $v_1 \sim v_2$ if $\G\cdot v_1 \cong \G\cdot v_2$ and set  $\mathcal{F} (\V) \coloneqq \V /\sim$.  Then for any Mostow--Palais embedding $\varepsilon : \G/ \H \to \V$, the map 
\[
\varphi: \mathcal{E}(\G) \to \mathcal{F} (\V),\quad \varphi([\H]) =  [\varepsilon(\H)]
\]
is well-defined and bijective. In particular, $\varphi$ is independent of the choice of $\varepsilon$. 
\end{lemma}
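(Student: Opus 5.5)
The plan is to reduce everything to Lemma~\ref{lem:embedding}, which identifies Mostow--Palais embeddings $\G/\H \to \V$ with points $v \in \V$ satisfying $\G_v = \H$, via $\varepsilon(g\H) = g\cdot v$. Throughout, $\G\cdot v_1 \cong \G\cdot v_2$ is read as an isomorphism of $\G$-manifolds, i.e.\ a $\G$-equivariant diffeomorphism, and likewise $\G/\H_1 \cong \G/\H_2$ is read $\G$-equivariantly. Since $\varepsilon$ is an embedding with $\varepsilon(\H)=v$, the orbit $\G\cdot v = \varepsilon(\G/\H)$ is $\G$-equivariantly diffeomorphic to $\G/\H = \G/\G_v$. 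The whole argument is then the bookkeeping between these two descriptions.

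\emph{Well-definedness and independence of $\varepsilon$.} Let $\H_1 \sim \H_2$ be representatives of the same class in $\mathcal{E}(\G)$, and let $\varepsilon_i : \G/\H_i \to \V$ be any Mostow--Palais embeddings. Put $v_i = \varepsilon_i(\H_i)$. By Lemma~\ref{lem:embedding}, $\G_{v_i} = \H_i$ and $\varepsilon_i$ is a $\G$-equivariant diffeomorphism $\G/\H_i \cong \G\cdot v_i$. Composing with the given equivariant diffeomorphism $\G/\H_1 \cong \G/\H_2$ gives
\[
\G\cdot v_1 \cong \G/\H_1 \cong \G/\H_2 \cong \G\cdot v_2 ,
\]
so $[v_1]=[v_2]$ in $\mathcal{F}(\V)$. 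The case $\H_1=\H_2$ with two different embeddings gives independence of $\varepsilon$, and the general case gives independence of the representative.

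\emph{Injectivity.} If $[\varepsilon_1(\H_1)] = [\varepsilon_2(\H_2)]$, reverse the same chain: $\G/\H_1 \cong \G\cdot v_1 \cong \G\cdot v_2 \cong \G/\H_2$, so $[\H_1]=[\H_2]$.

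\emph{Surjectivity.} Take any $v\in\V$ and set $\H \coloneqq \G_v$, which is closed as a stabilizer of a continuous action. By Lemma~\ref{lem:embedding}, $g\H\mapsto g\cdot v$ is a Mostow--Palais embedding. Hence $[\H]\in\mathcal{E}(\G)$ and $\varphi([\H])=[v]$.

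The main obstacle is a hidden point in Lemma~\ref{lem:embedding} as used in the surjectivity step. The orbit map $\G/\G_v \to \V$ is always an injective equivariant immersion, but it is an embedding only when the orbit is embedded, i.e.\ locally closed. For linearly reductive or compact $\G$ acting linearly, orbits are locally closed, since orbits of algebraic actions are locally closed. I would state that assumption explicitly, or interpret $\mathcal{F}(\V)$ as the classes of points whose orbits are embedded, so that surjectivity holds. A second caveat: if $\sim$ is taken as a bare diffeomorphism rather than an equivariant one, the chain of isomorphisms above still works verbatim, so the argument is unaffected either way.
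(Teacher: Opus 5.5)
Your proposal is correct and follows essentially the same route as the paper: independence of $\varepsilon$ and injectivity come from the chain $\G/\H_1 \cong \G\cdot v_1 \cong \G\cdot v_2 \cong \G/\H_2$ built from Lemma~\ref{lem:embedding}, and surjectivity is read off directly from Lemma~\ref{lem:embedding}. The paper's proof passes over two points you make explicit: independence of the representative $\H$ of a class, and the fact that the orbit map $\G/\G_v \to \V$ is an embedding only when the orbit is locally closed (automatic for the compact or real algebraic groups in \eqref{eq:G}).
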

\begin{proof}
Suppose $\varepsilon'$ is another Mostow--Palais embedding of $\G/\H$ into $\V$. Let $v \coloneqq \varepsilon(\H) $ and $v' \coloneqq \varepsilon'(\H) $.  Then $\G_{v} = \G_{v'} = \H$ and so $\G\cdot v \cong \G \cdot v' \cong \G/\H$.  Thus $\varphi$ does not depend on the choice of $\varepsilon$.  
The surjectivity of $\varphi$ follows from Lemma~\ref{lem:embedding}. For injectivity, suppose $\varphi([\H_1]) = \varphi([\H_2]) = [v]$.  Then there are Mostow--Palais embeddings $\varepsilon_1: \G/\H_1 \to \V$ and $\varepsilon_2: \G/\H_2 \to \V$ such that
\[
\G/\H_1 \cong \G\cdot \varepsilon_1(\H_1) \cong \G\cdot v \cong  \G \cdot \varepsilon_2(\H_2) \cong \G/\H_2,
\]
implying $[\H_1] = [\H_2]$.
\end{proof}

We will reduce our classification tasks in Sections~\ref{sec:A}--\ref{sec:C} to the case of simple groups. Thus equivalence reduces to left and right multiplications; and as the latter is dual to the former, it suffices to determine the stabilizer for left multiplication. So for our purpose, it suffices to determine the stabilizer $\GL_n(\F)_X$ under left multiplication, congruence and similarity. Surprisingly, they cannot be easily found in standard references; we will describe these stabilizers below for our later use. 

Let $\G_1 \subseteq \GL_p(\F)$ and $\G_2 \subseteq \GL_{n-p}(\F)$ be closed subgroups. We define the subgroup 
\begin{equation}\label{eq:semidirect}
\P(\G_1,  \G_2) \coloneqq \left\lbrace
\begin{bmatrix}
    A_1 & C \\
    0 & A_2
\end{bmatrix} \in \GL_n(\F): A_1 \in \G_1, \; A_2 \in \G_2,\; C \in \F^{p \times (n-p)}
\right\rbrace.
\end{equation}
For a trivial group  $G_1 = \{I_r\}$, we will write $\P(I_r,  \G_2)$, omitting the braces.

\begin{lemma}[Stabilizers of left multiplication]\label{lem:rectangle}
Let $\GL_n(\F)$ act on $\F^{n \times k}$, $k \le n$, by left multiplication
\[
\GL_n(\F) \times \F^{n\times k} \to \F^{n\times k},\quad (A,X) \mapsto AX.
\]
For any $X \in \F^{n \times k}$ with $\rank (X) = r$,  the stabilizer $\GL_n(\F)_X = Q \P\bigl(I_r, \GL_{n-r}(\F)\bigr) Q^{-1}$ for some $Q \in \O_n(\mathbb{R})$ if $\F = \R$ or $Q \in \Un_n$ if $\F = \C$.
\end{lemma}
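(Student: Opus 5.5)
The stabilizer of $X$ under left multiplication is $\{A \in \GL_n(\F) : AX = X\}$. This condition says exactly that $A$ fixes every vector in the column space $\mathcal{C}(X) \subseteq \F^n$, which has dimension $r = \rank(X)$. So the plan is to reduce to a normal form for the subspace $\mathcal{C}(X)$ by a unitary (or orthogonal) change of basis, and then read off the stabilizer directly.

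\emph{Step 1 (normalize the column space).} Choose an orthonormal basis $q_1,\dots,q_r$ of $\mathcal{C}(X)$ with respect to the standard inner product on $\R^n$ (respectively the Hermitian inner product on $\C^n$), for instance by Gram--Schmidt applied to the columns of $X$. Extend it to an orthonormal basis $q_1,\dots,q_n$ of $\F^n$, and let $Q = [q_1,\dots,q_n]$. Then $Q \in \O_n(\R)$ or $Q \in \Un_n$, and $Q$ maps $\operatorname{span}(e_1,\dots,e_r)$ onto $\mathcal{C}(X)$. Equivalently, $Y \coloneqq Q^{-1}X$ has the block form $\bigl[\begin{smallmatrix} Y_1 \\ 0\end{smallmatrix}\bigr]$, where $Y_1 \in \F^{r\times k}$ has rank $r$, i.e., full row rank. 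This step is the standard thin QR decomposition, so nothing delicate happens here.

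\emph{Step 2 (equivariance of stabilizers).} For any $A$, we have $AX = X$ if and only if $(Q^{-1}AQ)Y = Y$. Hence
\[
\GL_n(\F)_X = Q\,\GL_n(\F)_Y\,Q^{-1},
\]
and it suffices to show $\GL_n(\F)_Y = \P\bigl(I_r,\GL_{n-r}(\F)\bigr)$.

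\emph{Step 3 (compute the stabilizer of $Y$).} Write $B = \bigl[\begin{smallmatrix} B_{11} & B_{12}\\ B_{21} & B_{22}\end{smallmatrix}\bigr]$ in blocks of sizes $r$ and $n-r$. Then
\[
BY = \begin{bmatrix} B_{11}Y_1 \\ B_{21}Y_1\end{bmatrix},
\]
so the condition $BY = Y$ becomes $B_{11}Y_1 = Y_1$ and $B_{21}Y_1 = 0$. Because $Y_1$ has full row rank, it has a right inverse, and therefore $B_{11} = I_r$ and $B_{21} = 0$. The blocks $B_{12}$ and $B_{22}$ are otherwise free, subject only to $B$ being invertible. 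Since $\det B = \det B_{22}$, invertibility is equivalent to $B_{22} \in \GL_{n-r}(\F)$. Conversely, every matrix of this block form fixes $Y$. Comparing with \eqref{eq:semidirect}, this is exactly $\P\bigl(I_r,\GL_{n-r}(\F)\bigr)$.

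The only point needing care is Step 1: we must ensure that $Q$ can be taken orthogonal or unitary rather than merely invertible, and orthonormal completion of a basis of $\mathcal{C}(X)$ provides precisely that. The degenerate cases $r = 0$, where the stabilizer is all of $\GL_n(\F)$, and $r = n$, where it is $\{I_n\}$, are covered by the same argument with empty blocks.
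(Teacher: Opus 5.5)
Your proof is correct and follows essentially the same route as the paper: an orthogonal/unitary QR-type normalization of $X$, followed by reading off the block conditions $C_{11}=I_r$ and $C_{21}=0$ on $Q^{-1}AQ$. Your version spells out what the paper calls ``clear'', namely that full row rank of $Y_1$ forces $B_{11}=I_r$ and $B_{21}=0$, and that $\det B=\det B_{22}$. It also avoids the column permutation $\Pi$ that the paper's pivoted QR introduces.
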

\begin{proof}
Over $\R$, by the QR decomposition, there exist $Q \in \O_n(\R)$,  $S \in \R^{r \times (k-r)}$,  a permutation matrix $\Pi \in \R^{k \times k}$, and an upper triangular matrix $R \in \GL_r(\R)$,  such that 
\[
X = Q \begin{bmatrix}
    R & S \\ 0 &0
\end{bmatrix} \Pi^\tp.
\] 
Given $A \in \GL_n(\R)$,  we partition $Q^{-1} A Q \eqqcolon C = \begin{bsmallmatrix}
    C_{11} &C_{12}\\
    C_{21} &C_{22}
\end{bsmallmatrix}$ into blocks $C_{11} \in \R^{r \times r}$, $C_{12} \in \R^{r \times (n-r)}$, $C_{21} \in \R^{(n-r) \times r}$, and $C_{22} \in \R^{(n-r) \times (n-r)}$, from which it is clear that  $A \in \GL_n(\R)_X$ if and only if $C_{11} = I_r$ and $C_{21} = 0$. The same argument holds over $\C$ with $\Un_n$ in place of $\O_n(\R)$.
\end{proof}

Congruence defines an action on both the space of skew-symmetric matrices $\Alt^2(\F^n)$ and that of symmetric matrices $\Sym^2(\F^n)$. We characterize the respective stabilizers in the next two lemmas.
\begin{lemma}[Skew-symmetric stabilizers of congruence]\label{lem:antisymmetric}
Let $\GL_n(\F)$ act on $\Alt^2(\F^n)$ by congruence: 
\[
\GL_n(\F) \times \Alt^2(\F^n) \to \Alt^2(\F^n),\quad (A,  X) \mapsto A X A^\tp.
\]
For any $X \in \Alt^2(\F^n)$ with $\rank(X) = 2r$,  the stabilizer $\GL_n(\F)_X = Q \P\bigl(\Sp_{2r}(\F;\Omega),  \GL_{n-2r}(\F)\bigr) Q^{-1}$ for some $\Omega \in \Alt^2(\F^{2r}) \cap \GL_{2r}(\F)$, and $Q \in \O_n(\mathbb{R})$ if $\F = \R$ or $Q \in \Un_n$ if $\F = \C$.
\end{lemma}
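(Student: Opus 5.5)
Mirroring the proof of Lemma~\ref{lem:rectangle}, the plan is to reduce $X$ by a unitary/orthogonal congruence to a normal form with all nonzero entries in the leading $2r \times 2r$ block, and then read off the stabilizer block by block.

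\emph{Normal form.} Over $\R$, the real normal form of a skew-symmetric matrix gives $Q \in \O_n(\R)$ with $X = Q \diag(\Omega, 0) Q^\tp$. Here $\Omega \in \Alt^2(\R^{2r})$ is block diagonal with blocks $\bigl[\begin{smallmatrix} 0 & \sigma_i \\ -\sigma_i & 0 \end{smallmatrix}\bigr]$ and $\sigma_i > 0$, so $\Omega$ is invertible because $\rank(X) = 2r$. Over $\C$, the Youla decomposition gives $X = Q \diag(\Omega, 0) Q^\tp$ with $Q \in \Un_n$ and the same shape for $\Omega$. Alternatively, we may take $Q$ whose last $n - 2r$ columns form an orthonormal basis of $\ker X^\tp$ (conjugated appropriately); then the leading block of $Q^{-1} X Q^{-\tp}$ is automatically skew and nonsingular. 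This step is the one point needing care, because over $\C$ congruence uses the transpose, not the conjugate transpose. So $Q^{-\tp} = \overline{Q}$ differs from $Q$, and the conjugating element in the final statement must be chosen consistently.

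\emph{Reduction to the normal form.} $A X A^\tp = X$ holds if and only if $C X_0 C^\tp = X_0$, where $X_0 = \diag(\Omega, 0)$ and $C = Q^{-1} A Q$. This uses $Q^\tp = Q^{-1}$ in the real case. In the complex case I will instead take $Q$ real-orthogonal where possible, or absorb the discrepancy into the choice of $\Omega$: the form $\Omega' = Q^{-1} X Q^{-\tp}$ (with $Q$ unitary chosen so that its last columns span $\ker X$) is still skew and has the required block shape. Then $\GL_n(\F)_X = Q\, \GL_n(\F)_{X_0}\, Q^{-1}$.

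\emph{Stabilizer of the normal form.} Partition $C = \bigl[\begin{smallmatrix} C_{11} & C_{12} \\ C_{21} & C_{22} \end{smallmatrix}\bigr]$ with $C_{11}$ of size $2r \times 2r$. The condition $C X_0 C^\tp = X_0$ becomes
\[
C_{11} \Omega C_{11}^\tp = \Omega, \quad C_{11} \Omega C_{21}^\tp = 0, \quad C_{21} \Omega C_{21}^\tp = 0.
\]
The first equation with $\Omega$ invertible forces $C_{11}$ invertible. The second then gives $C_{21} = 0$, and the third is then automatic. Invertibility of $C$ now forces $C_{22} \in \GL_{n-2r}(\F)$, while $C_{12}$ is arbitrary. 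The first condition says $C_{11}^\tp$ preserves $\Omega^{-1}$-type form; equivalently, $C_{11} \in \Sp_{2r}(\F; \Omega')$ for a suitable skew invertible $\Omega'$ (for example $\Omega' = \Omega^{-1}$, using $C \Omega C^\tp = \Omega \iff C^\tp \Omega^{-1} C = \Omega^{-1}$). Note that $\det C_{11} = 1$ holds automatically for symplectic matrices, so the determinant condition built into the paper's definition of $\Sp_{2r}(\F;\Omega)$ costs nothing. Renaming $\Omega'$ as $\Omega$ gives $\GL_n(\F)_{X_0} = \P\bigl(\Sp_{2r}(\F;\Omega), \GL_{n-2r}(\F)\bigr)$, which proves the lemma.
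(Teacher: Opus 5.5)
Your proof is correct and follows the paper's route: bring $X$ to the Youla (or real normal) form $Q\diag(\Omega,0)Q^\tp$, then read off $C_{11}\Omega C_{11}^\tp=\Omega$ and $C_{21}=0$ from the block partition of $C=Q^{-1}AQ$. Your switch to $\Omega^{-1}$, which matches the convention $Q^\tp\Omega Q=\Omega$ in the definition of $\Sp_{2r}(\F;\Omega)$, and your remark that $\det C_{11}=1$ holds automatically are details the paper glosses over. Your worry about the complex case is unnecessary, however: $\GL_n(\F)_{QX_0Q^\tp}=Q\,\GL_n(\F)_{X_0}\,Q^{-1}$ holds for every invertible $Q$ because $(Q^{-1}AQ)^\tp=Q^\tp A^\tp Q^{-\tp}$, so orthogonality of $Q$ is never used and the unitary $Q$ from the Youla decomposition works as is.
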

\begin{proof}
For a positive integer $m$, write
\[
\Omega_{2m} \coloneqq \diag \biggl( 
\underbrace{\begin{bmatrix}
0 & 1 \\
-1 & 0 
\end{bmatrix},\dots,  \begin{bmatrix}
0 & 1 \\
-1 & 0 
\end{bmatrix}}_{m \text{~times}} \biggr) \in \Alt^2(\F^{2m}). 
\] 
By \cite{Thompson1988} and \cite{Youla}, $X$ has the canonical form
\[
X = Q \diag (\Omega, 0_{n-r}) Q^\tp, \quad \Omega =  \diag(\lambda_1 \Omega_{2n_1}, \dots,  \lambda_s \Omega_{2n_s}),
\]
with $\lambda_1, \dots, \lambda_s \in \R \setminus \{0\}$ and $(n_1,\dots, n_s) \in \mathbb{N}^s$ satisfying $2n_1 + \dots + 2n_s = 2r$.  Given $A \in \GL_n(\F)$, we partition $Q^{-1} A Q \eqqcolon C = \begin{bsmallmatrix}
    C_{11} &C_{12}\\
    C_{21} &C_{22}
\end{bsmallmatrix}$ into blocks $C_{11} \in \R^{2r \times 2r}$, $C_{12} \in \R^{2r \times (n-2r)}$, $C_{21} \in \R^{(n-2r) \times 2r}$, and $C_{22} \in \R^{(n-2r) \times (n-2r)}$, from which it is clear that $A \in \GL_n(\F)_X$ if and only if $C_{11} \Omega C_{11}^\tp = \Omega$ and $C_{21} = 0$. 
\end{proof}

\begin{lemma}[Symmetric stabilizers of congruence]\label{lem:symmetric}
Let $\GL_n(\F)$ act on $\Sym^2(\F^n)$ by congruence: 
\[
\GL_n(\F) \times \Sym^2(\F^n) \to \Sym^2(\F^n),\quad (A,  X) \mapsto A X A^\tp.
\]
For any $X \in \Sym^2(\F^n)$ with $\rank(X) = r$,  the stabilizer $\GL_n(\F)_X = Q \P\bigl(\O_r(\F, B),  \GL_{n-r}(\F)\bigr)Q^{-1}$ for some  $B \in \Sym^2(\F^r) \cap \GL_r(\F)$, and $Q \in \O_n(\mathbb{R})$ if  $\F = \R$ or $Q \in \Un_n$ if $\F = \C$.
\end{lemma}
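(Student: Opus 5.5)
We follow the pattern of Lemmas~\ref{lem:rectangle} and \ref{lem:antisymmetric}: first put $X$ into a canonical form under a unitary (or orthogonal) congruence, then compute the stabilizer of that canonical form by block partitioning.

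\emph{Step 1: canonical form.} For $\F = \R$, the spectral theorem gives $Q \in \O_n(\R)$ with $X = Q \diag(D, 0_{n-r}) Q^\tp$. Here $D \in \R^{r\times r}$ is diagonal and invertible, being the diagonal of nonzero eigenvalues. For $\F = \C$, we use the Takagi factorization of a complex symmetric matrix: $X = Q \diag(\Sigma, 0_{n-r}) Q^\tp$ with $Q \in \Un_n$ and $\Sigma$ a positive diagonal $r \times r$ matrix. In both cases we set $B \coloneqq D$ or $\Sigma$, so that $B \in \Sym^2(\F^r)\cap\GL_r(\F)$.

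One point needs care. The congruence $AXA^\tp$ involves the transpose, not the inverse, while the conjugating matrix in the statement appears as $Q(\cdot)Q^{-1}$. For real orthogonal $Q$ we have $Q^\tp = Q^{-1}$, so nothing changes. For unitary $Q$ we instead have $Q^\tp = \overline{Q}^{-1}$, which is the main subtlety. We handle it by writing
\[
A X A^\tp = X \iff (Q^{-1} A Q)\, \diag(B,0)\, (Q^{\tp} A^\tp Q^{-\tp}) = \diag(B,0).
\]
With $C \coloneqq Q^{-1} A Q$, the right-hand factor is exactly $C^\tp$, so the condition becomes $C \diag(B,0) C^\tp = \diag(B,0)$. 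Thus the computation reduces to the canonical form even in the unitary case.

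\emph{Step 2: block computation.} Partition $C = \begin{bsmallmatrix} C_{11} & C_{12}\\ C_{21} & C_{22}\end{bsmallmatrix}$ conformally with $r + (n-r)$. Expanding gives
\[
C\diag(B,0)C^\tp = \begin{bmatrix} C_{11} B C_{11}^\tp & C_{11} B C_{21}^\tp \\ C_{21} B C_{11}^\tp & C_{21} B C_{21}^\tp\end{bmatrix}.
\]
Equating this with $\diag(B,0)$ yields three conditions:
\begin{enumerate}[\upshape (i)]
\item $C_{11} B C_{11}^\tp = B$, which forces $C_{11}$ to be invertible;
\item $C_{21} B C_{11}^\tp = 0$, which together with the invertibility of $B$ and $C_{11}$ gives $C_{21}=0$;
\item $C_{21} B C_{21}^\tp = 0$, which then holds automatically.
\end{enumerate}
With $C_{21} = 0$, invertibility of $C$ forces $C_{22} \in \GL_{n-r}(\F)$, while $C_{12}$ is arbitrary. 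Conversely, every block upper-triangular $C$ with $C_{11}BC_{11}^\tp = B$ satisfies the equation.

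\emph{Step 3: identify the groups.} The condition $C_{11} B C_{11}^\tp = B$ is equivalent to $C_{11}^{\tp} B^{-1} C_{11} = B^{-1}$. Hence $C_{11}$ lies in the orthogonal group of a nondegenerate symmetric form. After renaming $B^{-1}$ as $B$, which is harmless since $B$ is diagonal, this is $\O_r(\F;B)$. Combining Steps~1 and 2 gives
\[
\GL_n(\F)_X = Q\,\P\bigl(\O_r(\F;B),\GL_{n-r}(\F)\bigr)\,Q^{-1}.
\]
The only genuinely delicate point is the transpose-versus-inverse issue for complex unitary $Q$, and it is resolved by the observation in Step~1 that $Q^\tp A^\tp Q^{-\tp} = (Q^{-1}AQ)^\tp$.
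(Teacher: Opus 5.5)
Your proposal is correct and takes essentially the paper's route. The paper also puts $X$ in the canonical form $Q\diag(B,0)Q^\tp$ via the spectral theorem (real case) or the Takagi factorization (complex case), then does the block computation with $C = Q^{-1}AQ$ exactly as in Lemma~\ref{lem:antisymmetric}. Your extra details are what the paper's ``same argument'' leaves implicit: $(Q^{-1}AQ)^\tp = Q^\tp A^\tp Q^{-\tp}$ holds for any invertible $Q$, so unitarity plays no role, and you replace $B$ by $B^{-1}$ to match the convention $Q^\tp B Q = B$.
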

\begin{proof}
Over $\R$, by the spectral decomposition for real symmetric matrices,
\[
X = Q \diag (\lambda_1 I_{n_1},  \dots, \lambda_m  I_m,  0_{n-r})Q^\tp,
\]
where $Q \in \O_n(\R)$, $(\lambda_1,\dots,  \lambda_m) \in (\R\setminus\{0\})^m$, and $(n_1,\dots,  n_m) \in \N^m$ satisfying $n_1 + \dots + n_m = r$. The required characterization of $\GL_n(\R)_X$ follows from the same argument in the proof of Lemma~\ref{lem:antisymmetric}. Over $\C$, the Takagi decomposition for complex symmetric matrices \cite{Takagi1924} yields the required result with $Q\in \Un_n$.
\end{proof}

For positive integers $k,l, k_1,\dots,  k_p$,  we let $\UT(k,l,\F)$ denote the space of $k \times l$ upper triangular Toeplitz matrices over $\F$, and $\BUT(2k,2l,\F)$ the space of $k \times l$ block upper triangular Toeplitz matrices over $\F$ with blocks of the form $\bigl[\begin{smallmatrix}
    a & b \\
    -b & a
    \end{smallmatrix}\bigr]$ for some $a,b\in \R$. In addition, we define
\begin{align*}
\UT_{k_1,\dots,  k_p}(\F) &\coloneqq \bigl\lbrace
(T_{ij})_{i,j=1}^p \in \GL_{k_1 + \dots + k_p}(\F): T_{ij} \in \UT(k_i,  k_j,\F),\;  i,j = 1,\dots,  p
\bigr\rbrace,  \\
\BUT_{2k_1,\dots, 2k_p}(\F) &\coloneqq \bigl\{(T_{ij})_{i,j=1}^p \in \GL_{2k_1 + \dots + 2k_p}(\F): T_{ij} \in \BUT(2k_i,2k_j,\F),\; i,j = 1,\dots, p \bigr\},
\end{align*}
and we drop the parenthetical $\F$ when the choice of field is clear. 
It is easy to see that both $\UT_{k_1,\dots,  k_p}$ and $\BUT_{2k_1,\dots, 2k_p}$ are Lie subgroups.  The next lemma follows immediately from \cite[Theorems~9.1.1 and 12.4.2]{Gohberg_Lancaster_Rodman}.
\begin{lemma}[Stabilizers of similarity]\label{lem:JCF}
Let $\GL_n(\F)$ act on $\F^{n \times n}$ by similarity: 
\[
\GL_n(\F) \times \F^{n\times n} \to \F^{n\times n},\quad (A,X) \mapsto A X A^{-1}.
\]
For any $X \in \F^{n \times n}$, the stabilizer
\[
\GL_n(\F)_X = A \Bigl( \prod_{i=1}^p \UT_{n_{i,1},\dots,  n_{i,s_i}} \times  \prod_{j=1}^q \BUT_{m_{j,1},\dots,m_{j,t_j}} \Bigr) A^{-1}, 
\]
for some $A \in \GL_n(\F)$, positive integers $n_{i,1},\dots, n_{i,s_i}$, $i =1,\dots, p$, and $m_{j,1},\dots,  m_{j,t_j}$, $j =1,\dots, q$, satisfying
\[
n = \sum_{i=1}^p (n_{i,1} + \dots + n_{i,s_i}) + \sum_{j=1}^q (m_{j,1} + \dots + m_{j,t_j}).
\]
Moreover,  when $\F = \C$,  we have $m_{j,1} = \dots = m_{j,t_j} = 0$ for all $j =1,\dots, q$.
\end{lemma}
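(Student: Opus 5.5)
The statement to prove is Lemma~\ref{lem:JCF}, which describes the centralizer of $X$ in $\GL_n(\F)$. The plan is to reduce to a canonical form and then read off the centralizer block by block. First, if $X' = A^{-1} X A$, then $\GL_n(\F)_X = A\, \GL_n(\F)_{X'}\, A^{-1}$. So we may replace $X$ by its Jordan canonical form when $\F = \C$, or by its real Jordan canonical form when $\F = \R$. In the real case, each real eigenvalue contributes ordinary Jordan blocks, and each conjugate pair $a \pm ib$ contributes real Jordan blocks of even size whose $2\times 2$ diagonal entries are $\bigl[\begin{smallmatrix} a & b\\ -b & a\end{smallmatrix}\bigr]$. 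We group the blocks by eigenvalue: $p$ distinct real (or complex, when $\F=\C$) eigenvalues $\lambda_i$ with block sizes $n_{i,1},\dots,n_{i,s_i}$, and $q$ distinct conjugate pairs $\mu_j$ with block sizes $m_{j,1},\dots,m_{j,t_j}$, each $m_{j,\ell}$ even. The stabilizer under similarity is exactly the group of invertible matrices $T$ with $TX' = X'T$.

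Second, split the commutant by generalized eigenspaces. Write $X' = \diag(X_1,\dots,X_{p+q})$, where the summands have pairwise disjoint spectra. A standard Sylvester-equation argument shows that $T_{uv}X_v = X_u T_{uv}$ with $\sigma(X_u)\cap\sigma(X_v)=\emptyset$ forces $T_{uv}=0$. Hence $T$ is block diagonal, and the stabilizer is the product of the invertible commutants of the $X_u$. This gives the product structure in the statement.

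Third, identify each factor. For a single eigenvalue $\lambda$, write $X_u = \lambda I + N$ with $N = \diag(N_{k_1},\dots,N_{k_s})$ nilpotent Jordan blocks, so commuting with $X_u$ is the same as commuting with $N$. Partition $T$ into blocks $T_{ij}$ of size $k_i\times k_j$. The condition is $N_{k_i}T_{ij} = T_{ij}N_{k_j}$. Comparing entries of this shift equation shows that $T_{ij}$ is constant along diagonals, vanishes below the appropriate diagonal, and is upper triangular Toeplitz in the rectangular sense of $\UT(k_i,k_j,\F)$ (padded by zeros on the left or bottom when $k_i\ne k_j$). Intersecting with $\GL$ gives $\UT_{k_1,\dots,k_s}$. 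For a conjugate pair, $X_u$ has the same nilpotent structure but with scalars replaced by the commutative algebra $\mathcal{C}=\{\bigl[\begin{smallmatrix} a&b\\-b&a\end{smallmatrix}\bigr]\}\cong\C$. Commuting with the diagonal $2\times2$ blocks $\bigl[\begin{smallmatrix} a&b\\-b&a\end{smallmatrix}\bigr]$ with $b\neq 0$ forces each $2\times 2$ entry of $T$ to lie in $\mathcal{C}$. The Toeplitz argument then repeats over $\mathcal{C}$ and yields $\BUT_{m_{j,1},\dots,m_{j,t_j}}$. Over $\C$ all eigenvalues are handled by the first case, so $q=0$. The dimension count $n=\sum n_{i,\ell}+\sum m_{j,\ell}$ is just the total block size.

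The main obstacle is the bookkeeping in the rectangular Toeplitz identification for $k_i\neq k_j$ and its real block analogue, together with checking that the paper's $\UT$/$\BUT$ conventions match the zero-padding. This is exactly the content of \cite[Theorems~9.1.1 and 12.4.2]{Gohberg_Lancaster_Rodman}, which I would cite for this step rather than redo it entrywise.
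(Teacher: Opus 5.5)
Your proposal is correct and follows the paper's route: the paper gives no argument beyond citing \cite[Theorems~9.1.1 and 12.4.2]{Gohberg_Lancaster_Rodman}, and your steps (reduction to the (real) Jordan form, block-diagonalization by the Sylvester argument, then the rectangular Toeplitz description of each commutant) are what those theorems contain. One step needs a justification you did not give: for a conjugate pair $a\pm ib$, a matrix commuting with the real Jordan part also commutes with its block-diagonal part $I\otimes\bigl[\begin{smallmatrix} a&b\\-b&a\end{smallmatrix}\bigr]$, because that part is a real polynomial in the block (Jordan--Chevalley decomposition over $\R$).
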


\section{Reduction of the main classification problem}\label{sec:red}

In this brief section, we outline the general arguments used to obtain our classification of $\G$-manifolds that have a faithful representation, i.e., what we promised in item~\ref{it:class} of Section~\ref{sec:new}. We consider all $\G$-manifolds that have transitive $\G$-actions by
\begin{equation}\label{eq:G}
\G \in \{\SL_n(\F), \SO_n(\F),\Sp_n(\F), \SU_n, \SO_{p,q}, \Sp_n: \F = \R \text{ or } \C\}.
\end{equation}
The list contains simple classical groups of types A, B, C, D including all complex groups, all real forms that are split or compact, and $\SO_{p,q}$, which is neither split nor compact when $|p - q| \ge 2$ and $p,q > 0$. The only omissions are four esoteric cases that are also neither compact nor split, namely, $\SO^*_{2n}$, $\SU_{p, q}$, $\Sp_{p, q}$, and $\SU^*_{2n}$, as well as the exceptional groups, which we will leave to future work.

The main effort for this classification will be carried out over the next three sections on a case-by-case basis: Section~\ref{sec:A} for classical groups of type A, Section~\ref{sec:BD} for types B or D,  and Section~\ref{sec:C} for type C. From a high-level perspective, each section follows the same approach below.
\begin{description}
\item[Complex representations of $\G/\H$ for complex groups $\G$] By Proposition~\ref{prop:min-sum}, to classify faithful representations $\varepsilon : \G/\H \to \W \subseteq \C^{n \times n}$, it suffices to consider only irreducible ones, which can be done by determining the stabilizer $\G_X$ for $X \in \W$ by Lemma~\ref{lem:bijection}. In Sections~\ref{sec:slnc},~\ref{sec:sonc}, and~\ref{lem:spnc}, we first list all possible irreducible complex $\G$-modules, then determine all possible $\G_X$'s using results from Section~\ref{sec:orbit}, obtaining a classification.
\item[Real representations of $\G_\R/\H$ for real forms $\G_\R$] By Lemma~\ref{lem:comp-real}, it suffices to classify faithful representations $\varepsilon : \G_\R/\H \to \U \subseteq \R^{n \times n}$ where $\U$ is a real $\G_\R$-module. Any such $\U$ must take the form $\W = \U \otimes_{\R} \C$ for some complex $\G$-module $\W$ subjected to specific constraints depending on both the real form and the type. In Sections~\ref{sec:slnr},~\ref{sec:sun},~\ref{sec:sopq},~\ref{sec:spnr},~\ref{sec:spn}, we first list all possible irreducible real $\G$-modules $\U$, then determine all possible $\G_X$'s using results from Section~\ref{sec:orbit}, obtaining a classification.
\end{description}
We emphasize that the devil is in the details --- the low-level workings in Sections~\ref{sec:A}--\ref{sec:C} differ from one type to the next and cannot be handwaved away. But we do our best to exploit similarities and avoid repetition in Sections~\ref{sec:A}--\ref{sec:C}.

We also need the following observation, which ought to be standard but apparently is not; so we record it below. Recall from Section~\ref{sec:rep} that irreducible $\G$-modules $\W_\kappa$ are indexed by $\kappa \coloneqq (\kappa_1,\dots,  \kappa_m) \in \N^m$ with $m$ as in \eqref{eq:m} \cite[Chapter~3]{Goodman_Wallach}. The value of $\dim\W_{\kappa}$ is given by the Weyl dimension formulae in Proposition~\ref{prop:dim-formula}.
\begin{lemma}[Dimension inequality for irreducible $\G$-modules]\label{lem:dimcut}
Let $\G$ be $\SL_n(\C)$, $\SO_n(\C)$, or $\Sp_n(\C)$. If $\W_\kappa$ is the irreducible $\G$-module indexed by $\kappa = (\kappa_1,\dots,\kappa_s,0,\dots,0) \in \N^m$, then $\dim \W_{\kappa - e_i} < \dim \W_{\kappa}$ for any $i = 1, \dots, s$.  Here $e_i$ is the $i$th standard basis vector in $\F^m$.
\end{lemma}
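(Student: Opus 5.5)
We will argue from the Weyl dimension formulae of Proposition~\ref{prop:dim-formula}, showing that each factor is monotone in $\kappa$ and that at least one factor strictly decreases when $\kappa_i$ drops by one. Each formula is a product of factors of the form
\[
\frac{L(\kappa)}{c} + 1,
\]
where $c > 0$ is a positive integer depending only on the indices, and $L(\kappa)$ is a linear form in $\kappa$ with nonnegative integer coefficients. Each such factor is at least $1$, so the whole product is positive. Since the coefficients are nonnegative, replacing $\kappa$ by $\kappa - e_i$ (which stays in $\N^m$ because $\kappa_i \ge 1$ for $i \le s$) does not increase any factor. So $\dim \W_{\kappa - e_i} \le \dim \W_\kappa$ follows factor by factor, because all factors are positive.

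For strict inequality it suffices to exhibit one factor whose linear form actually contains $\kappa_i$ with a positive coefficient. That factor then strictly decreases, while all the others weakly decrease and stay positive.

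\begin{itemize}
\item For $\mathfrak{sl}_n(\C)$, take the pair $(i, j) = (i, i+1)$. The factor is $\kappa_i + 1$, which becomes $\kappa_i$.
\item For $\mathfrak{so}_{2m+1}(\C)$ and $\mathfrak{sp}_n(\C)$ with $i < m$ (resp.\ $i < n$), the same $\mathfrak{sl}$-type factor $\kappa_i + 1$ is present.
\item For the last index $i = m$ (resp.\ $i = n$) in these two cases, use the second product with $i = j = m$. Its numerator $\sum_{s=m}^m \kappa_s + \cdots$ contains $\kappa_m$.
\item For $\mathfrak{so}_{2m}(\C)$ with $i < m$, use the first product's factor for the pair $(i, i+1)$. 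For $i = m$, use a factor of the second product with $j \le m-1$; its numerator $\sum_{s=i}^m \kappa_s$ includes $\kappa_m$.
\end{itemize}

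The main obstacle is bookkeeping at the boundary indices, especially type $\mathrm{D}$ with $i = m$. Here one must check that the second product is nonempty, i.e.\ $m \ge 2$, and that the chosen factor's denominator $2m - i - j$ is positive. For $m \ge 2$ the pair $(1, 2)$ works, since its denominator $2m - 3 > 0$. Degenerate small ranks, such as $\mathfrak{so}_2$, which is not simple, can be excluded or checked directly.
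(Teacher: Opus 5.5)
The paper states this lemma without proof (it calls it an observation that ``ought to be standard''), so there is no argument of the authors' to compare against. Your factor-by-factor monotonicity argument from the Weyl dimension formula is the natural proof, and it is correct. There are two small bookkeeping corrections.

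First, the $\mathfrak{sl}_n(\C)$ formula as printed in Proposition~\ref{prop:dim-formula} runs over $1\le i<j\le m=n-1$. That range contains no factor involving $\kappa_{n-1}$ at all. Your pair $(n-1,n)$ exists only in the correct range $1\le i<j\le n$. So for $i=n-1$ you are (rightly) using the true Weyl formula rather than the printed one, and you should say so explicitly.

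Second, $\mathfrak{so}_2(\C)$ cannot be ``checked directly''. When $m=1$ both products are empty, so every irreducible $\SO_2(\C)$-module is one-dimensional. The lemma is simply false there and must be excluded. This is harmless, since the paper only applies the lemma for $n\ge 19$.

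If you want to avoid the type-by-type case analysis, and any dependence on the exact printed formulas, use the root-theoretic form. Write $\dim\W_\kappa=\prod_{\alpha>0}\bigl(1+\langle\lambda,\alpha^\vee\rangle/\langle\rho,\alpha^\vee\rangle\bigr)$ with $\lambda=\sum_s\kappa_s\omega_s$. Each $\langle\omega_s,\alpha^\vee\rangle$ is the nonnegative coefficient of $\alpha_s^\vee$ in $\alpha^\vee$. The simple coroot $\alpha_i^\vee$ contributes exactly the factor $1+\kappa_i$, which gives strictness in every type at once.
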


\section{Faithful linear representations of Type~$\mathrm{A}$ spaces}\label{sec:A}

In this section we will find all homogeneous spaces $\G/\H$ that have faithful representations, where $\G$ is either $\SL_n(\C)$ or its real forms $\SL_n(\R)$ and $\SU_n$. 

\subsection{Faithful linear representations of $\SL_n(\C)$-spaces}\label{sec:slnc}

Let $\W_{\kappa}$ be a complex irreducible $\SL_n(\C)$-module where $\kappa = (\kappa_1,\dots,  \kappa_{n-1})\in \N^{n-1}$.  By Proposition~\ref{prop:dim-formula}, we immediately have 
\begin{equation}\label{eq:dimSym}
\dim \W_{\kappa_{n-1},\dots,\kappa_{1}} = \dim \W_{\kappa_1,\dots,  \kappa_{n-1}}.
\end{equation}
\begin{lemma}[Low dimension $\SL_n(\C)$-module]\label{lem:slnc}
Let $n\ge 9$.  A complex irreducible $\SL_n(\C)$-module $\W$ has dimension at most $n^2$ if and only if it is isomorphic to $\C$,  $\C^n$,  $\Alt^2(\C^n)$,  $ \Sym^2(\C^n)$ or $\sl_n(\C)$.
\end{lemma}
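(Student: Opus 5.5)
The plan is to use the Weyl dimension formula of Proposition~\ref{prop:dim-formula}, together with the monotonicity in Lemma~\ref{lem:dimcut} and the symmetry \eqref{eq:dimSym}. Throughout, ``isomorphic to'' is read up to duality. The duals $(\C^n)^*$, $\Alt^2(\C^n)^*$, $\Sym^2(\C^n)^*$ correspond to the reversed weights $\kappa=e_{n-1},e_{n-2},2e_{n-1}$. They arise in $\C^{n\times n}$ through the contragredient actions $X\mapsto XA^{-1}$ and $X\mapsto A^{-\tp}XA^{-1}$.

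For the ``if'' direction, I would simply list the dimensions: $1$, $n$, $\frac12 n(n-1)$, $\frac12 n(n+1)$ and $n^2-1$, all of which are at most $n^2$.

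For the ``only if'' direction, let $\kappa\neq 0$ with $\dim\W_\kappa\le n^2$. Iterating Lemma~\ref{lem:dimcut} gives $\dim\W_{\kappa'}\le\dim\W_\kappa$ whenever $0\le\kappa'\le\kappa$ coordinatewise. Hence it suffices to exhibit a finite list of ``minimal forbidden'' weights $\kappa'$ with $\dim\W_{\kappa'}>n^2$. The key steps then run as follows.
\begin{enumerate}[(i)]
\item If $\kappa_i>0$ for some $3\le i\le n-3$, then $\W_{e_i}=\Alt^i(\C^n)$ has dimension $\binom ni\ge\binom n3>n^2$. This holds because $(n-1)(n-2)>6n$ for $n\ge 9$, and this is exactly where the hypothesis $n\ge 9$ is needed. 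So $\kappa$ is supported on $\{1,2,n-2,n-1\}$.
\item Among weights of total size $2$ on this support, I compute the dimensions by Weyl's formula, or equivalently by the hook-content formula for the corresponding Young diagrams. Up to the symmetry \eqref{eq:dimSym}, these are
\[
\dim\W_{e_1+e_2}=\tfrac13 n(n^2-1),\qquad \dim\W_{2e_2}=\tfrac1{12}n^2(n^2-1),
\]
together with $\dim\W_{e_1+e_{n-2}}=\frac12 n(n+1)(n-2)$, $\dim\W_{e_2+e_{n-2}}$ and $\dim\W_{e_2+e_{n-1}}$. All of these exceed $n^2$ for $n\ge 9$. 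The only survivors are $2e_1$, $2e_{n-1}$ and $e_1+e_{n-1}$, the last being $\sl_n(\C)$.
\item Weights of total size $\ge 3$ must dominate a size-$3$ weight, all of whose size-$2$ sub-weights are survivors. Such a weight is therefore one of $3e_1$, $2e_1+e_{n-1}$, $e_1+2e_{n-1}$ or $3e_{n-1}$. These have dimensions $\binom{n+2}3$ and $\frac12 n(n-1)(n+2)$ (the latter for both mixed weights), all of which exceed $n^2$. So $\kappa$ has size $\le2$.
\end{enumerate}
Collecting the survivors gives exactly $0$, $e_1$, $e_{n-1}$, $e_2$, $e_{n-2}$, $2e_1$, $2e_{n-1}$ and $e_1+e_{n-1}$, which is the stated list up to duality.

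The main obstacle I expect is bookkeeping rather than depth. I must make sure that the list of minimal forbidden weights is complete, and verify each polynomial inequality against $n^2$ uniformly for $n\ge 9$; the tightest such inequality is $\binom n3>n^2$ in step (i). For the size-$2$ cases I would first try the hook-content formula, since it gives closed forms quickly. Then I would check each resulting cubic or quartic against $n^2$ at $n=9$, and argue that the ratio to $n^2$ is increasing in $n$.
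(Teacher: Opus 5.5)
Your proof is correct. It uses the same ingredients as the paper: the Weyl dimension formula, the monotonicity of Lemma~\ref{lem:dimcut}, and the symmetry \eqref{eq:dimSym}. What differs is how the forbidden weights are split into cases.

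The paper uses three shape-based cases:
\begin{enumerate}[(i)]
\item both end coordinates nonzero with sum $\ge 3$;
\item one end coordinate $\ge 3$ and all other coordinates zero;
\item some coordinate in positions $3,\dots,n-3$ nonzero, or $\kappa_2+\kappa_{n-2}\ge 2$.
\end{enumerate}
You instead first exclude every weight with a nonzero coordinate in positions $3,\dots,n-3$. You then enumerate all weights of total size $2$ and $3$ supported on $\{1,2,n-2,n-1\}$ and treat the forbidden ones as minimal forbidden weights.

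This reorganization is more than cosmetic. The paper's three cases do not cover weights with $\kappa_2+\kappa_{n-2}=1$ and $\kappa_1+\kappa_{n-1}\ge 1$. Examples are $e_1+e_2$, of dimension $\tfrac13 n(n^2-1)$, and $e_1+e_{n-2}$, of dimension $\tfrac12 n(n+1)(n-2)$. Your exhaustive size-$2$ enumeration handles exactly these, so your argument is complete where the paper's case split is not.

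Your ``up to duality'' reading also matches what the paper does implicitly: its list of eight admissible tuples includes $e_{n-1}$, $e_{n-2}$ and $2e_{n-1}$.
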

\begin{proof}
Since $\W \cong \W_{\kappa}$ for some $\kappa \in \N^{n-1}$,  it suffices to prove $\dim \W_{\kappa} \le n^2$ if and only if $\kappa$ is one of the following $(n-1)$-tuples: 
\begin{gather*}
(0,\dots,  0),  \quad (1,0,  \dots,  0),  \quad (0,\dots, 0,1), \quad (0,1,0,\dots,  0),  \\ 
(0,\dots,  0,1,0),  \quad (2,0,\dots,  0),  \quad (0,\dots,0,2),  \quad (1,0,\dots, 0,1). 
\end{gather*}
If $\kappa$ is one of these $(n-1)$-tuples, then $\dim \W_\kappa \le n^2$ by a direct calculation using Proposition~\ref{prop:dim-formula}.  To show that $\dim \W_{\kappa} > n^2$,  we split into three cases:
\begin{enumerate}[(i)]
\item If $\kappa = (\kappa_1,0, \dots, 0,\kappa_{n-1})$ where $\kappa_1,\kappa_{n-1} \ge 1$ and $\kappa_1 + \kappa_{n-1} \ge 3$,  then by applying Lemma~\ref{lem:dimcut} and \eqref{eq:dimSym} recursively, we get
\[
\dim \W_\kappa \ge \dim \W_{2,0,\dots,0,1} = \frac{n(n-1)(n+2)}{2} > n^2.
\]
\item If $\kappa = (\kappa_1,0,\dots,\kappa_{n-1})$ where either $\kappa_1 \ge 3$ and $\kappa_{n-1} = 0 $,  or  $\kappa_1 = 0$ and $\kappa_{n-1} \ge 3$, then Lemma~\ref{lem:dimcut} and \eqref{eq:dimSym} yield
\[\dim \W_\kappa \ge \dim \W_{3,0,\dots,0} = \binom{n+2}{3} > n^2.\]
\item If $\kappa = (\kappa_1,\dots,\kappa_{n-1})$ where either $\kappa_i \ge 1$ for some $3 \le i \le n-3$, or $\kappa_2 + \kappa_{n-2} \ge 2$ and $\kappa_i = 0$ for all $3 \le i \le n-3$,  then Lemma~\ref{lem:dimcut} gives
\[
\dim \W_\kappa \ge 
\begin{cases}
\dim \W_{e_i} = \binom{n}{i}  \quad &\text{if $\kappa_i \ge 1$ for some $3 \le i \le n-3$},    \\
\dim \W_{2e_2} = \frac{n^2(n+1)(n-1)}{12}  \quad &\text{if $\kappa_3 = \dots = \kappa_{n-3} = 0$ and $\kappa_2 \ge 2$}, \\
\dim \W_{2e_{n-2}} = \frac{n^2(n+1)(n-1)}{12}  \quad &\text{if $\kappa_3 = \dots = \kappa_{n-3} = 0$ and $\kappa_{n-2} \ge 2$}, \\
\dim \W_{e_2 + e_{n-2}} = \frac{n^2(n+1)(n-3)}{4}
\quad &\text{if $\kappa_3 = \dots = \kappa_{n-3} = 0$ and $\kappa_2 = \kappa_{n-2} = 1$}.
\end{cases} 
\]
Since $n \ge 9$,  we have $\dim \W_\kappa > n^2$.  \qedhere
\end{enumerate}
\end{proof}
Combining Lemma~\ref{lem:slnc} with the general results of Section~\ref{sec:man},  we obtain the following, where the notation $\P(\G_1,\G_2)$ is as in \eqref{eq:semidirect}.
\begin{theorem}[Classification of faithful representations of $\SL_n(\C)$-spaces]\label{thm:slnc-classification}
Let $n \ge 9$. Let $\H \subsetneq \SL_n(\C)$ be a closed subgroup and $\W$ be a complex $\SL_n(\C)$-module of dimension at most $n^2$.  The homogeneous space $\SL_n(\C)/\H$ admits a faithful representation in $\W$ if and only if there are integers $0 \le b \le n$; $0 \le c \le 2$;  $0 \le d, e \le 1$ such that $\bigl(  \frac{c + d }{2} + e - 1  \bigr) n^2 + \bigl( b - \frac{c}{2}  + \frac{d}{2} \bigr) n - e \le 0$ and
\begin{align}
\W &\cong \C^{n \times b} \oplus \Alt^2(\C^n)^{\oplus c} \oplus \Sym^2(\C^n)^{\oplus d} \oplus \sl_n(\C)^{\oplus e},  \label{eq:slnc-sum:1}\\
\H &= \H_1  \cap \biggl( \bigcap_{i=1}^c \H_{2,i} \biggr) \cap \biggl( \bigcap_{j=1}^d \H_{3,j} \biggr) \cap \biggl( \bigcap_{k=1}^e \H_{4,k} \biggr) , \label{eq:slnc-sum:2}
\end{align}
where
\begin{align*}
\H_1 &= Q \P\bigl(I_r, \SL_{n-r}(\C)\bigr) Q^{\ast},  &\H_{2,i} &= U_i \P\bigl(\Sp_{2r_i}(\C;\Omega_i), \SL_{n-2r_i}(\C)\bigr) U_i^{\ast}, \\
\H_{3,j} &= V_j \S\Bigl( \P\bigl(\O_{s_j}(\C;B_j), \GL_{n-s_j}(\C)\bigr) \Bigr) V_j^{\ast}, &\H_{4,k} &= A_k \S\Bigl( \prod_{u=1}^p \UT_{n_{k,u,1},\dots,  n_{k,u,t_u}} \Bigr) A_k^{-1}, 
\end{align*}
for some integers $0 \le r \le b$;  $0 \le r_i \le n/2$;  $0 \le s_j \le n$;  $1 \le n_{k,u,1},\dots,  n_{k,u,t_u} \le n$ such that 
$\sum_{u=1}^p (n_{k,u,1} + \dots + n_{k,u,t_u}) = n$,  and matrices $Q,U_i , V_j \in \Un_n$,  $\Omega_i \in \Alt^2(\R^{2r_i}) \cap \GL_{2r_i}(\R)$,  $B_j \in \Sym^2(\R^{r_i}) \cap \GL_{r_j}(\R)$,  $A_k \in \GL_n(\C)$,  with $1 \le i \le c$,  $1 \le j \le d$, and $1 \le k \le e$.
\end{theorem}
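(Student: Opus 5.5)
The plan is to combine Lemma~\ref{lem:slnc} with the decomposition results of Section~\ref{sec:orb}. Let $\W$ be an $\SL_n(\C)$-module with $\dim\W\le n^2$. Since $\SL_n(\C)$ is linearly reductive, $\W$ splits as a direct sum of irreducibles. Each summand has dimension at most $n^2$, so by Lemma~\ref{lem:slnc} it is one of $\C$, $\C^n$, its dual, $\Alt^2(\C^n)$ and its dual, $\Sym^2(\C^n)$ and its dual, or $\sl_n(\C)$.

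First I would remove the redundant summands. Trivial summands $\C$ have stabilizer all of $\G$, so they do not affect $\H$ and can be discarded. Composing with the outer automorphism $A\mapsto A^{-\tp}$ identifies each dual module with the original module. It also replaces each stabilizer by a conjugate of the same form, so the stated parametrization can absorb it (a point to check against the unitary conjugators allowed in the statement).

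Next I would count how many copies can appear. With $b$ copies of $\C^n$, $c$ of $\Alt^2$, $d$ of $\Sym^2$ and $e$ of $\sl_n$, the condition $\dim\W\le n^2$ reads
\[
bn+c\tfrac{n(n-1)}{2}+d\tfrac{n(n+1)}{2}+e(n^2-1)\le n^2 .
\]
Rearranged, this is exactly the displayed inequality in the theorem, and it forces $c\le 2$ and $d,e\le 1$. The $b$ copies of $\C^n$ combine into $\C^{n\times b}$ under left multiplication, and the other summands are acted on by congruence or similarity. This gives \eqref{eq:slnc-sum:1}.

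By Lemma~\ref{lem:embedding}, $\H$ must be the stabilizer $\G_v$ of some $v=(X,Y_1,\dots,Y_c,Z_1,\dots,Z_d,W_1,\dots,W_e)\in\W$. Applying \eqref{eq:orb-cap} repeatedly, equivalently Proposition~\ref{prop:min-sum}, gives $\H$ as the intersection of the individual stabilizers, which is \eqref{eq:slnc-sum:2}. Each of these stabilizers is found from \eqref{eq:stabilizer} as $\GL_n(\C)_\bullet\cap\SL_n(\C)$:
\begin{itemize}
\item Lemma~\ref{lem:rectangle} applied to $X\in\C^{n\times b}$ of rank $r\le b$ gives $\H_1$. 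Intersecting $\P(I_r,\GL_{n-r})$ with $\det=1$ gives $\P(I_r,\SL_{n-r})$.
\item Lemma~\ref{lem:antisymmetric} gives $\H_{2,i}$. Symplectic matrices have determinant $1$, so the determinant condition falls on the $\GL_{n-2r_i}$ block.
\item Lemma~\ref{lem:symmetric} gives $\H_{3,j}$, where the $\S(\cdot)$ must be kept because orthogonal matrices can have determinant $-1$.
\item Lemma~\ref{lem:JCF} over $\C$ gives $\H_{4,k}$, with no $\BUT$ factors. Adding a scalar to $W_k$ to make it traceless does not change its centralizer.
\end{itemize}

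Conversely, take any data as in the statement. Pick matrices realizing the canonical forms used in the proofs of those lemmas, such as $Q\,\diag(R,0)$, $U_i\diag(\Omega_i,0)U_i^\tp$, and a matrix with the prescribed Jordan structure. Their tuple $v$ then has $\G_v=\H$. Lemma~\ref{lem:embedding} turns this into a Mostow--Palais embedding $g\H\mapsto g\cdot v$ into $\W\subseteq\C^{n\times n}$ with matrix actions. By Definition~\ref{def:faith} this is a faithful representation.

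I expect the main obstacle to be the bookkeeping, not any deep step. Three points need care: making the dualization step rigorous, checking that every canonical form, including every admissible Jordan type, is actually realized inside the traceless module, and matching the dimension count exactly to the stated quadratic inequality. Properness of $\H$ only excludes $\W=0$.
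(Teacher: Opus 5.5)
Your route is the paper's own. You decompose $\W$ via Proposition~\ref{prop:min-sum} and Lemma~\ref{lem:slnc}, and your rearrangement of $\dim\W\le n^2$ into the displayed inequality is exact. You then write $\H=\G_v$ as an intersection of summand stabilizers, read these off from Lemmas~\ref{lem:rectangle}--\ref{lem:JCF} intersected with $\SL_n(\C)$, and realize the canonical forms for the converse. Your determinant bookkeeping and the traceless shift for $\sl_n(\C)$ are correct. The genuine gap is the dualization step you flagged, and it cannot be closed.

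\textbf{Why the dualization step fails.} The twist $\theta(A)=A^{-\tp}$ acts on the whole group at once.
\begin{itemize}
\item It can turn $\W$ into $\W^*$, but it cannot turn a mixed module such as $\C^n\oplus(\C^n)^*$ into $\C^{n\times 2}$.
\item It does not send stabilizers to conjugates of the same shape. $\G/\H$ and $\G/\theta(\H)$ are $\G$-equivariantly diffeomorphic only if $\H$ and $\theta(\H)$ are $\G$-conjugate, and for the outer automorphism $\theta$ this generally fails.
\end{itemize}

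\textbf{Counterexamples.}
\begin{itemize}
\item Let $\W=\C^{1\times n}\cong(\C^n)^*$ with $(A,X)\mapsto XA^{-1}$, which is one of the actions in \eqref{eq:act}, and let $v=e_1^\tp$. Then $\H=\G_v=\{A\in\SL_n(\C): e_1^\tp A=e_1^\tp\}$, and $g\H\mapsto e_1^\tp g^{-1}$ is a faithful representation with $\dim\W=n$. Since $\dim\W=n$, the statement would force $(b,c,d,e)=(1,0,0,0)$. But $(\C^n)^*\not\cong\C^n$. Moreover this $\H$ fixes no nonzero vector of $\C^n$, whereas every $Q\P(I_1,\SL_{n-1}(\C))Q^\ast$ fixes $Qe_1$.
\item Take $\W=\C^n\oplus(\C^n)^*$ and $v=(e_1,e_1^\tp)$. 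This gives $\H=\diag(1,\SL_{n-1}(\C))$, which is not of the stated form for any admissible $(b,c,d,e)$. To see this, compute the $\H$-fixed points in each listed module and their stabilizers.
\item Your trivial-summand step has a milder version of the same problem. For $\W=\C^n\oplus\C$, discarding $\C$ gives the right $\H$, but $\W$ itself still violates \eqref{eq:slnc-sum:1}.
\end{itemize}

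So the ``only if'' direction is false as written. The paper's proof has the same hole and does not flag it. It relies on Lemma~\ref{lem:slnc}'s claim that every irreducible of dimension at most $n^2$ is isomorphic to $\C$, $\C^n$, $\Alt^2(\C^n)$, $\Sym^2(\C^n)$ or $\sl_n(\C)$. Yet the highest weights listed in that lemma's own proof include the duals $(0,\dots,0,1)$, $(0,\dots,0,1,0)$ and $(0,\dots,0,2)$.

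Your argument does go through for a corrected statement, in either of two forms:
\begin{itemize}
\item Restrict $\W$ to sums of the listed modules with the listed actions, allowing trivial summands and ignoring them.
\item Enlarge the list by the dual summands. Their stabilizers are $\theta(\H_1)$, $\theta(\H_{2,i})$, $\theta(\H_{3,j})$, which are block lower triangular after a unitary change of basis, and mixed intersections of these with the original ones must also be allowed. $\sl_n(\C)$ is self-dual and needs no change.
\end{itemize}
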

\begin{proof}
If $\W$ and $\H$ satisfy \eqref{eq:slnc-sum:1} and \eqref{eq:slnc-sum:2} respectively,  then Proposition~\ref{prop:min-sum} and \eqref{eq:stabilizer} collectively imply that $\SL_n(\C)/\H$ has a faithful representation in $\W$.  For the converse, we apply Proposition~\ref{prop:min-sum} and Lemma~\ref{lem:slnc} to conclude that $\W$ must take the form in \eqref{eq:slnc-sum:1}.  The constraints on $b,c,d,e$ follows from the assumption that $\dim \W \le n^2$. Proposition~\ref{prop:min-sum} also shows that $\H$ is the intersection of stabilizers of  $\SL_n(\C)$ acting on  $\C^{n\times b}$ (via left multiplication),  $\Alt^2(\C^n)$ and  $\Sym^2(\C^n)$ (via congruence), and  $\sl_n(\C)$ (via the adjoint action, which reduces to similarity).  Thus \eqref{eq:slnc-sum:2} follows from Lemmas~\ref{lem:rectangle}--\ref{lem:JCF} and \eqref{eq:stabilizer}.
\end{proof}

\subsection{Faithful linear representations of $\SL_n(\R)$-spaces}\label{sec:slnr}

Every complex irreducible $\SL_n(\C)$-module can be written as $\U \otimes \C$ for some real irreducible $\SL_n(\R)$-module \cite[Proposition~26.23]{Fulton_Harris}. This yields the following corollary of Lemma~\ref{lem:slnc}.
\begin{corollary}[Low dimension $\SL_n(\R)$-modules]\label{cor:slnr}
Let $n\ge 9$.  A real irreducible $\SL_n(\R)$-module $\U_{\kappa}$ has dimension at most $n^2$ if and only if it is isomorphic to $\R$,  $\R^n$,  $\Alt^2(\R^n)$,  $\Sym^2(\R^n)$, or $\sl_n(\R)$.
\end{corollary}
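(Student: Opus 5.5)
The plan is to pass between real and complex modules by complexification, which preserves dimension, and then read off the answer from Lemma~\ref{lem:slnc}. Let $\U$ be a real irreducible $\SL_n(\R)$-module. Its complexification $\U \otimes_\R \C$ is a complex $\SL_n(\R)$-module of the same dimension ($\dim_\C \U\otimes\C = \dim_\R \U$). Since $\SL_n(\R)$ is a real form of $\SL_n(\C)$, this is also a complex $\SL_n(\C)$-module. The same extension argument appears in the proof of Lemma~\ref{lem:comp-real}: restrict to the Lie algebra, complexify it to $\sl_n(\C)$, and use simple connectivity of $\SL_n(\C)$.

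The first step is to show that $\U\otimes\C$ is irreducible. A real irreducible module of a real form can complexify to a sum $\W\oplus\overline{\W}$ only when $\U$ is of complex (or quaternionic) type. Because $\SL_n(\R)$ is split, every complex irreducible module is defined over $\R$; this is the content of the cited \cite[Proposition~26.23]{Fulton_Harris}. Concretely, each highest weight module $\W_\kappa$ arises as a Schur functor applied to $\C^n$ and therefore has the real form $\U_\kappa$ obtained from the same Schur functor applied to $\R^n$. Consequently $\End_{\SL_n(\R)}(\U)=\R$, so $\U\otimes\C$ is irreducible and isomorphic to some $\W_\kappa$. Distinct real irreducibles give distinct $\kappa$, since a real form of an absolutely irreducible module is unique up to isomorphism.

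The second step is to apply Lemma~\ref{lem:slnc}. We have $\dim_\R \U \le n^2$ if and only if $\dim_\C \W_\kappa \le n^2$, which for $n\ge 9$ holds if and only if $\W_\kappa$ is one of $\C$, $\C^n$, $\Alt^2(\C^n)$, $\Sym^2(\C^n)$, $\sl_n(\C)$, or a dual of one of these. Their real forms are $\R$, $\R^n$, $\Alt^2(\R^n)$, $\Sym^2(\R^n)$, $\sl_n(\R)$, and the duals. For the dual modules (those with $\kappa=(0,\dots,0,1)$, $(0,\dots,1,0)$, $(0,\dots,0,2)$), the map $X\mapsto X^{-\tp}$ twists the action, so up to isomorphism of the underlying matrix spaces they are the same spaces as in the statement, with action through $A^{-\tp}$. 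I expect this identification to be the main point needing care, and I would state it as matching the paper's convention of treating right multiplication as dual to left. The converse direction is immediate, since each listed module is irreducible of dimension at most $n^2$.
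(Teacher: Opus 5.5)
Your proof is correct and follows the paper's route: complexify, use that every complex irreducible module of the split form $\SL_n(\R)$ is defined over $\R$ \cite[Proposition~26.23]{Fulton_Harris}, and read off the list from Lemma~\ref{lem:slnc}. Two of your steps make explicit what the paper's one-line deduction leaves implicit: the check that $\End_{\SL_n(\R)}(\U)=\R$, so that $\U\otimes\C$ stays irreducible, and the remark that the duals $(\R^n)^*$, $\Alt^2(\R^n)^*$, $\Sym^2(\R^n)^*$ are covered by the stated list only up to the $A\mapsto A^{-\tp}$ twist.
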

Applying the same arguments in the proof of Theorem~\ref{thm:slnc-classification} verbatim,  we arrive at the required classification.
\begin{proposition}[Classification of faithful representations of $\SL_n(\R)$-spaces]\label{prop:slnr-classification}
Let $n \ge 9$. Let $\H \subsetneq \SL_n(\R)$ be a closed subgroup and $\U$ be a real $\SL_n(\R)$-module of dimension at most $n^2$.  The homogeneous space $\SL_n(\R)/\H$ admits a faithful representation in $\U$ if and only if there are integers $0 \le b \le n$; $0 \le c \le 2$;  $0 \le d,e \le 1$ such that $\bigl(  \frac{c + d }{2} + e - 1  \bigr) n^2 + \bigl( b - \frac{c}{2}  + \frac{d}{2} \bigr) n - e \le 0$ and
\begin{align*}
\U &\cong \R^{n \times b} \oplus \Alt^2(\R^n)^{\oplus c} \oplus \Sym^2(\R^n)^{\oplus d} \oplus \sl_n(\R)^{\oplus e},  \\
\H &= \H_1  \cap \biggl( \bigcap_{i=1}^c \H_{2,i} \biggr) \cap \biggl( \bigcap_{j=1}^d \H_{3,j} \biggr) \cap \biggl( \bigcap_{k=1}^e \H_{4,k} \biggr),
\end{align*}
where
\begin{align*}
\H_1 &= Q \P\bigl(I_r , \SL_{n-r}(\R)\bigr) Q^{\ast}, \qquad
\H_{2,i} = U_i \P\bigl(\Sp_{2r_i}(\R;\Omega_i), \SL_{n-2r_i}(\R)\bigr) U_i^{\ast},   \\
\H_{3,j} &= V_j \S\Bigl( \P\bigl(\O_{s_j}(\R;B_j), \GL_{n-s_j}(\R)\bigr) \Bigr) V_j^{\ast},  \\ 
\H_{4,k} &= A_k \S\Bigl( \prod_{u=1}^p \UT_{n_{k,u,1},\dots,  n_{k,u,t_u}} \times \prod_{v=1}^q \BUT_{m_{k,v,1},\dots, m_{k,v,l_v}} \Bigr) A_k^{-1},
\end{align*}
for some integers $0 \le r \le b$; $0 \le r_i \le n/2$; $0 \le s_j \le n$; $1 \le n_{k,u,1},\dots, n_{k,u,t_u},  m_{k,v,1},\dots, m_{k,v,l_v} \le n$ such that $\sum_{u=1}^p (n_{k,u,1} + \dots +  n_{k,u,t_u})  + \sum_{v=1}^q (m_{k,v,1} + \dots + m_{k,v,l_v}) = n$ and matrices $Q, U_i, V_j \in \O_n(\R)$,  $\Omega_i \in \Alt^2(\R^{2r_i}) \cap \GL_{2r_i}(\R)$,  $B_j \in \Sym^2(\R^{r_i}) \cap \GL_{r_j}(\R)$,  $A_k \in \GL_n(\R)$ with  $1 \le i \le c$,  $1 \le j \le d$, and $1 \le k \le e$.
\end{proposition}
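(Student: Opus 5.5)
The argument follows the proof of Theorem~\ref{thm:slnc-classification} step for step, with Corollary~\ref{cor:slnr} in place of Lemma~\ref{lem:slnc} and real canonical forms in place of complex ones.

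\emph{Sufficiency.} Suppose $\U$ and $\H$ have the stated forms. Each $\H_1$, $\H_{2,i}$, $\H_{3,j}$, $\H_{4,k}$ is, by \eqref{eq:stabilizer} together with Lemmas~\ref{lem:rectangle}, \ref{lem:antisymmetric}, \ref{lem:symmetric}, and \ref{lem:JCF} over $\F=\R$, the $\SL_n(\R)$-stabilizer of a suitable point. These points lie in $\R^{n\times b}$ (rank $r$, left multiplication), in $\Alt^2(\R^n)$ or $\Sym^2(\R^n)$ (congruence), or in $\sl_n(\R)$ (adjoint action, i.e.\ similarity). Intersecting with $\SL_n(\R)$ produces the $\S(\cdot)$ and the $\SL_{n-r}$ factors. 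Lemma~\ref{lem:embedding} then gives a Mostow--Palais embedding of each $\SL_n(\R)/\H_\bullet$. Proposition~\ref{prop:min-sum} assembles these into an embedding of $\SL_n(\R)/\H$ into the direct sum, which is a faithful representation since every summand is a space of matrices with a matrix-multiplication action.

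\emph{Necessity.} Take a faithful $\varepsilon:\SL_n(\R)/\H\to\U$. Since $\SL_n(\R)$ is reductive, Proposition~\ref{prop:min-sum} decomposes $\U$ into real irreducibles, and $\H$ is the intersection of the corresponding stabilizers. Because $\dim\U\le n^2$, each irreducible summand has dimension at most $n^2$, so Corollary~\ref{cor:slnr} restricts it to $\R$, $\R^n$, $\Alt^2(\R^n)$, $\Sym^2(\R^n)$, or $\sl_n(\R)$. Trivial summands contribute the stabilizer $\SL_n(\R)$ and can be dropped. Grouping the copies of $\R^n$ into $\R^{n\times b}$ and counting dimensions,
\[
bn+\tfrac{c}{2}n(n-1)+\tfrac{d}{2}n(n+1)+e(n^2-1)\le n^2,
\]
which is exactly the stated inequality. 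For $n\ge 9$ it forces $c\le2$, $d,e\le1$, and $b\le n$.

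Finally, read off each stabilizer. The points of the $b$ copies of $\R^n$ combine into one matrix in $\R^{n\times b}$, whose stabilizer is given by Lemma~\ref{lem:rectangle}. The congruence stabilizers come from Lemmas~\ref{lem:antisymmetric} and \ref{lem:symmetric}, and the similarity stabilizer from Lemma~\ref{lem:JCF}; the real Jordan form accounts for the $\BUT$ blocks. Each is intersected with $\SL_n(\R)$ via \eqref{eq:stabilizer}.

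\emph{Main obstacle.} The key point is that Corollary~\ref{cor:slnr} classifies \emph{real} irreducibles, so real modules of complex type (a complex irreducible viewed as real, of doubled dimension) might seem to escape it. The citation \cite[Proposition~26.23]{Fulton_Harris} rules these out: every complex irreducible of $\SL_n(\C)$ has a real form, so every real irreducible $\SL_n(\R)$-module is absolutely irreducible and appears in the list. The one remaining detail to check is that the adjoint action on $\sl_n(\R)$ is genuinely the similarity action of Lemma~\ref{lem:JCF} restricted to traceless matrices. This holds because adding a scalar matrix does not change the similarity stabilizer, so the stabilizers from Lemma~\ref{lem:JCF} apply unchanged.
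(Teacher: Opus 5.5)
Your proposal is correct and is essentially the paper's proof, which just reruns the argument of Theorem~\ref{thm:slnc-classification} verbatim with Corollary~\ref{cor:slnr} in place of Lemma~\ref{lem:slnc}. Your remarks on absolute irreducibility of real $\SL_n(\R)$-irreducibles, on merging the $b$ copies of $\R^n$ into one point of $\R^{n\times b}$, and on the similarity stabilizer of a traceless matrix make explicit details the paper leaves implicit.

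One caveat is inherited from the paper rather than introduced by you. The proof of Lemma~\ref{lem:slnc} also produces the weights $(0,\dots,0,1)$, $(0,\dots,0,1,0)$, $(0,\dots,0,2)$, which over $\R$ give the duals $(\R^n)^*$, $\Alt^2(\R^n)^*$, $\Sym^2(\R^n)^*$. These are not isomorphic to the listed modules, and their stabilizers are the images of the listed ones under $A\mapsto A^{-\tp}$. So the statement covers them, like the trivial summands you discard, only up to that identification.
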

\subsection{Faithful linear representations of $\SU_n$-spaces}\label{sec:sun}
Unlike the case of $\SL_n(\R)$,  only some irreducible complex $\SU_n$-modules can be obtained from the real ones.  We reproduce \cite[Proposition~26.24]{Fulton_Harris} for easy reference:
\begin{lemma}\label{lem:su-real} 
Let $\W_\kappa$ be an irreducible complex representation of $\SL_n(\C)$ with $\kappa = (\kappa_1,\dots,  \kappa_{n-1}) \in \N^{n-1}$.  There exists an irreducible real $\SU_n$-module $\U$ such that $\W_\kappa = \U \otimes \C$ if and only if $\kappa_i = \kappa_{n-i}$ for all $i=1,\dots, n-1$ and one of the following holds:
\begin{enumerate}[(a)]
    \item $n \equiv 1 \pmod 2$,
    \item $n \equiv 0 \pmod 4$,
    \item $n \equiv 2 \pmod 4$ and $\kappa_{n/2}  \equiv 0 \pmod 2$.
\end{enumerate}
\end{lemma}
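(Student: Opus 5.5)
The plan is to reduce the statement to the Frobenius--Schur criterion for the compact group $\SU_n$ and then compute a sign from the highest weight.

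\emph{Step 1: reduce to an invariant symmetric form.} $\W_\kappa = \U \otimes \C$ for a real $\SU_n$-module $\U$ exactly when $\W_\kappa$ carries an $\SU_n$-invariant conjugate-linear involution $\sigma$ (a real structure). Any such $\U$ is automatically irreducible, since a proper submodule of $\U$ would complexify to a proper submodule of $\W_\kappa$. Since $\SU_n$ is compact, $\W_\kappa$ has an invariant Hermitian inner product $\langle\cdot,\cdot\rangle$.
\begin{itemize}
\item Given $\sigma$, the form $\beta(v,w)=\langle v,\sigma w\rangle$ is a nondegenerate invariant complex bilinear form. It is symmetric after averaging/polar adjustment of $\sigma$.
\item Conversely, given a nondegenerate invariant symmetric bilinear form, the standard polar-decomposition argument produces an invariant $\sigma$ with $\sigma^2=1$. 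An invariant skew form would instead give $\sigma^2=-1$, i.e.\ a quaternionic structure.
\end{itemize}
By Schur's lemma, $\W_\kappa$ carries at most one invariant bilinear form up to scalar, and that form is either symmetric or skew. So the condition becomes: $\W_\kappa\cong\W_\kappa^\ast$, and the invariant form is symmetric.

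\emph{Step 2: self-duality.} The lowest weight of $\W_\kappa$ is $w_0\kappa$. Hence $\W_\kappa^\ast\cong\W_{-w_0\kappa}$. For $\SL_n(\C)$, $-w_0$ reverses the Dynkin diagram, so $\W_\kappa^\ast\cong\W_{\kappa_{n-1},\dots,\kappa_1}$. Therefore $\W_\kappa$ is self-dual if and only if $\kappa_i=\kappa_{n-i}$ for all $i$. This gives the first condition.

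\emph{Step 3: symmetric versus skew (the main obstacle).} I would use the standard fact that, for a self-dual irreducible module $\W_\lambda$, the invariant form is symmetric or skew according to the sign $(-1)^{\langle\lambda,2\rho^\vee\rangle}$. Here $2\rho^\vee$ is the sum of the positive coroots. Equivalently, this is the scalar by which the central element $\exp(\pi i\, 2\rho^\vee)$ acts. The fact can be proved as follows:
\begin{itemize}
\item pair the highest weight vector with the lowest weight vector;
\item compare the form on these two vectors using a representative of $w_0$ in the principal $\SL_2$;
\item reduce to the $\SL_2$ case, where $\Sym^k(\C^2)$ has a symmetric or skew invariant form according to the parity of $k$.
\end{itemize}
This is the step I would most likely cite or sketch rather than fully prove.

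\emph{Step 4: the computation.} For $\SL_n$, the $i$th fundamental weight pairs with $2\rho^\vee$ to $i(n-i)$. So
\[
\langle\kappa,2\rho^\vee\rangle=\sum_{i=1}^{n-1}\kappa_i\, i(n-i).
\]
Under $\kappa_i=\kappa_{n-i}$, the terms for $i$ and $n-i$ are equal whenever $i\ne n-i$, so they contribute an even amount. This leaves only the middle term when $n$ is even:
\begin{itemize}
\item If $n$ is odd, there is no middle term, so the sum is even and case (a) always holds.
\item If $n$ is even, the middle term is $\kappa_{n/2}(n/2)^2$. When $n\equiv 0\pmod 4$, $n/2$ is even, so this term is even and case (b) always holds. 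When $n\equiv 2\pmod 4$, $(n/2)^2$ is odd, so the sum is even precisely when $\kappa_{n/2}$ is even, which is case (c).
\end{itemize}
Combining Steps 1--4 yields the lemma.
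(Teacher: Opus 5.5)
Your proof is correct. Note that the paper gives no argument of its own for this lemma: it simply quotes Fulton--Harris, Proposition~26.24. So your proposal supplies a self-contained proof where the paper only cites a reference.

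\textbf{Your route.} You use three steps.
\begin{enumerate}
\item A real structure exists if and only if there is a nondegenerate invariant \emph{symmetric} bilinear form, by Schur's lemma and compactness of $\SU_n$.
\item The module is self-dual if and only if $\kappa_i=\kappa_{n-i}$, since $-w_0$ reverses the Dynkin diagram.
\item The symmetry sign is $(-1)^{\langle\kappa,2\rho^\vee\rangle}$, obtained from the principal $\SL_2$ acting on the highest and lowest weight vectors.
\end{enumerate}
This is the uniform Frobenius--Schur-type criterion, and it works for any simple compact group. It also explains the mod-$4$ pattern: the sign is the scalar by which the central element $\exp(\pi i\,2\rho^\vee)=(-1)^{n-1}I_n\in\SU_n$ acts. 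Under $\kappa_i=\kappa_{n-i}$, the parity of $\sum_i\kappa_i\,i(n-i)$ collapses to that of $\kappa_{n/2}(n/2)^2$.

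I checked $\langle\omega_i,2\rho^\vee\rangle=i(n-i)$ and the parity bookkeeping; both are right. Your sketch of Step~3 is also sound. The principal $\SL_2$-submodule generated by the highest weight vector has dimension $\langle\kappa,2\rho^\vee\rangle+1$. It contains the one-dimensional lowest $h$-eigenspace, which is spanned by the $w_0\kappa=-\kappa$ weight vector. So the invariant form restricts nontrivially to a copy of $\mathsf{S}^k(\C^2)$ with $k=\langle\kappa,2\rho^\vee\rangle$.

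\textbf{What each approach buys.} The citation is economical, and your argument is conceptual and type-independent. A more hands-on alternative, specific to $\SL_n$, avoids the principal $\SL_2$ altogether:
\begin{itemize}
\item $\Lambda^i\C^n\otimes\Lambda^{n-i}\C^n\cong\Lambda^i\C^n\otimes(\Lambda^i\C^n)^\ast$ carries a symmetric invariant pairing.
\item The wedge pairing $\Lambda^{n/2}\C^n\times\Lambda^{n/2}\C^n\to\Lambda^n\C^n\cong\C$ is symmetric if and only if $n/2$ is even.
\item One then propagates these signs through Cartan components of tensor products, where the signs multiply.
\end{itemize}

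\textbf{A small tightening in Step~1.} No averaging or polar adjustment of $\sigma$ is needed. The invariant Hermitian product on the irreducible $\W_\kappa$ is unique up to a positive scalar, so any invariant real structure $\sigma$ is automatically antiunitary. Hence $\langle v,\sigma w\rangle$ is already symmetric. Equivalently, you can just complexify an $\SU_n$-invariant inner product on $\U$.
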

In particular,  if $\W \le n^2$ and $\W = \U \otimes \C$ for some real $\SU_n$-module $\U$,  then it follows from Lemmas~\ref{lem:slnc} and \ref{lem:su-real} that $\U$ is isomorphic to either $\R$ or $\mathfrak{su}_n$.
\begin{proposition}[Classification of faithful representations of $\SU_n$-spaces]\label{prop:sun-classification}
Let $n \ge 9$. Let $\H \subsetneq \SU_n$ be a closed subgroup and $\U$ be a real $\SU_n$-module such that $\dim \U \le n^2$.  Then $\SU_n/\H$ admits a faithful representation in $\U$ if and only if 
\[
\U \cong \mathfrak{su}_n,\quad \H =Q \S (\Un_{k_1} \times \dots \times \Un_{k_d} ) Q^\ast,
\] 
where $Q\in \SU_n$,  $1 \le k_1,\dots,  k_d \le n$, and $k_1 + \dots + k_d = n$.
\end{proposition}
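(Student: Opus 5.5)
The plan follows the template of Theorem~\ref{thm:slnc-classification}. By Proposition~\ref{prop:min-sum}, a faithful representation of $\SU_n/\H$ in $\U$ splits as a direct sum of Mostow--Palais embeddings $\SU_n/\H_i \to \U_i$ into irreducible real $\SU_n$-modules $\U_i$, with $\H = \bigcap_i \H_i$. So the first step is to list the irreducible real $\SU_n$-modules of real dimension at most $n^2$.

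\textbf{Step 1: the candidate modules.} Such a module is either of real type, with $\U_i \otimes \C$ irreducible, or of complex or quaternionic type, where $\U_i \otimes \C = \W \oplus \overline{\W}$ with $\dim_\C \W = \tfrac12 \dim_\R \U_i$. In the real-type case, Lemma~\ref{lem:su-real} together with Lemma~\ref{lem:slnc} leaves only $\R$ and $\su_n$, as already noted after Lemma~\ref{lem:su-real}. In the other case, $\W$ has complex dimension at most $n^2/2$, so by Lemma~\ref{lem:slnc} it is $\C^n$, $\Alt^2(\C^n)$, or $\Sym^2(\C^n)$ (or their duals); the real dimensions are then $2n$, $n(n-1)$, and $n(n+1)$.

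\textbf{Step 2: the dimension constraint.} The trivial module $\R$ contributes nothing, since its stabilizer is all of $\SU_n$. Because $\su_n$ has dimension $n^2-1$, it can appear at most once, and only alongside trivial summands. The other combinations are $\R^{2n}$ sums or one copy of $\Alt^2$ plus a little more, and these do not obviously break the bound $n^2$. I will have to deal with them, and this is the main obstacle. The statement as written lists only $\su_n$, which suggests the authors intend that only real forms $\U$ with $\U\otimes\C$ irreducible count, following the remark after Lemma~\ref{lem:su-real}. Under that reading the modules $\C^n$, $\Alt^2$, and $\Sym^2$ are excluded, and I will adopt it. Otherwise I would check whether, for instance, the stabilizer $\SU_{n-1}$ of $\C^n$ yields additional spaces. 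The first thing I would try is to show that such spaces are absent from the intended class, appealing to the convention that $\U \otimes_\R \C$ should be an irreducible $\W_\kappa$ as in Lemma~\ref{lem:comp-real}.

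\textbf{Step 3: stabilizers in $\su_n$.} With $\U = \su_n$ and $\SU_n$ acting by conjugation, Lemma~\ref{lem:embedding} says that $\H = (\SU_n)_X$ for some $X \in \su_n$. I would use the unitary spectral theorem instead of Lemma~\ref{lem:JCF}. Write $X = Q\diag(i\lambda_1 I_{k_1},\dots,i\lambda_d I_{k_d})Q^\ast$ with $Q\in\SU_n$ (rescaling a column of a unitary matrix to fix the determinant) and distinct real $\lambda_j$. A unitary matrix commuting with this diagonal matrix must preserve its eigenspaces, so the stabilizer is $Q\,\S(\Un_{k_1}\times\dots\times\Un_{k_d})Q^\ast$. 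Every such partition is realized by choosing distinct $\lambda_j$ with $\sum k_j\lambda_j=0$. Properness $\H\ne\SU_n$ forces $d\ge 2$, i.e.\ $X\ne 0$; this is consistent with the statement's range $1\le k_j\le n$. The converse direction then follows directly from Lemma~\ref{lem:embedding}, since the orbit map $gH\mapsto gXg^\ast$ is an embedding, $\SU_n$ being compact.
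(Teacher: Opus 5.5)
Your template is the same as the paper's: Proposition~\ref{prop:min-sum}, then Lemmas~\ref{lem:su-real} and \ref{lem:slnc} to list the admissible irreducible summands, then the unitary spectral theorem on $\su_n$. Your Step~3 is exactly the paper's stabilizer computation.

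The genuine gap is Step~2. The statement quantifies over \emph{all} real $\SU_n$-modules with $\dim_\R\U\le n^2$, so you cannot discard the complex-type summands by reinterpreting the hypothesis. Lemma~\ref{lem:comp-real} imposes no convention that $\U\otimes_\R\C$ be irreducible. Nor can the gap be closed, because your own Step~1 produces counterexamples:
\begin{itemize}
\item Let $\U=\C^n$, regarded as a real module of dimension $2n\le n^2$, with $\SU_n$ acting by left multiplication. The stabilizer of $e_1$ is $\{\diag(1,A): A\in\SU_{n-1}\}$, so $\SU_n/\SU_{n-1}\cong S^{2n-1}$ has a faithful representation in $\U$. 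Yet $\U\not\cong\su_n$, and $S^{2n-1}$ is odd-dimensional, whereas every $\SU_n/\S(\Un_{k_1}\times\dots\times\Un_{k_d})$ has even dimension $2\sum_{i<j}k_ik_j$.
\item Let $\U=\Alt^2(\C^n)$ under congruence, of real dimension $n(n-1)\le n^2$, and take $X=J_n$ with $n$ even. This gives $\SU_n/\Sp_n$. Its isotropy group has finite center, unlike $\S(\Un_{k_1}\times\dots\times\Un_{k_d})$ for $d\ge 2$.
\end{itemize}
These are precisely the $\SU_n$-rows $\C^{n\times k}$ and $\Alt^2(\C^n)$ of Table~\ref{tab:min}. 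For instance, they realize the compact Stiefel manifolds $\Stiefel_k(\C^n)$ in Table~\ref{tab:mat}.

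The paper's own proof has the same hole. It asserts that, by Lemma~\ref{lem:su-real}, $\U$ is a sum of trivial modules or the adjoint module. But that lemma only detects irreducible real modules whose complexification stays irreducible. The realifications of $\C^n$ and $\Alt^2(\C^n)$, whose complexifications split as $\W\oplus\overline{\W}$, are simply missed. This makes the proposition inconsistent with Theorem~\ref{thm:mat}, so neither argument establishes the statement as written.

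Two versions are actually provable:
\begin{itemize}
\item The statement under an explicit extra hypothesis that every irreducible summand of $\U$ has irreducible complexification. Here your Step~3 finishes the job.
\item An enlarged classification that also admits $\C^{n\times k}$ with $2nk\le n^2$, and $\Alt^2(\C^n)$. The isotropy groups come from intersecting the stabilizers of Lemmas~\ref{lem:rectangle} and \ref{lem:antisymmetric} with $\SU_n$.
\end{itemize}
Even the restricted version needs two small fixes, which the paper also misses. First, $\su_n\oplus\R\cong\mathfrak{u}_n$ has dimension $n^2$ and carries the same faithful representations, so the conclusion should read ``$\U\cong\su_n$ up to one trivial summand''. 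Second, $d\ge 2$ must be imposed, as you noticed, since $d=1$ gives $\H=\SU_n$.
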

\begin{proof}
By Lemma~\ref{lem:su-real}, $\U$ is either a direct sum of trivial modules or the module of adjoint representation.  If $\U \cong \R^{\oplus b}$ for some $ b \in \{1,\dots, n\}$,  then $\SU_n/\H$ is a single point since $\SU_n$ acts trivially on $\U$. In this case $\H = \SU_n$.  If $\U \cong \mathfrak{su}_n$,  then by the spectral theorem,  we may write any $X \in \mathfrak{su}_n$ as $X =i Q \diag (\lambda_1 I_{k_1},  \dots,  \lambda_d I_{k_d}) Q^\ast$,  where $Q\in \SU_n$,  $k_1 + \dots + k_d = n$, and $\lambda_1,  \dots,  \lambda_d \in \R$ are distinct with $\lambda_1 + \dots + \lambda_d = 0$.  A direct calculation shows that $(\SU_n)_X = Q \S (\Un_{k_1} \times \dots \times \Un_{k_d} ) Q^\ast$. This together with Lemma~\ref{lem:embedding} completes the proof.  
\end{proof} 

\section{Faithful linear representations of Type~$\mathrm{B}$ and $\mathrm{D}$ spaces}\label{sec:BD}

In this section we will find all homogeneous spaces $\G/\H$ that have faithful representations, where $\G = \SO_n(\C)$ or  its real form $\SO_{p,n-p}$,  $p =0,1, \dots, n$. 

\subsection{Faithful linear representations of $\SO_n(\C)$-spaces}\label{sec:sonc}

We have the following analogue of Lemma~\ref{lem:slnc} for $\SO_n(\C)$.
\begin{lemma}[Low dimension $\SO_n(\C)$-modules]\label{lem:sonc}
Let $n \ge 19$.  Let $\W$ be an irreducible complex $\SO_n(\C)$-module.  The dimension of $\W$ is at most $n^2$ if and only if $\W$ is isomorphic to $\C$,  $\C^n$,  $\Alt^2(\C^n) $, or $\Sym_\oh^2(\C^n) $. 
\end{lemma}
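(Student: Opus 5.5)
We mirror the proof of Lemma~\ref{lem:slnc}. Write $\W \cong \W_\kappa$ with $\kappa = (\kappa_1,\dots,\kappa_m) \in \N^m$ and $m = \lfloor n/2 \rfloor$ as in \eqref{eq:m}. In the standard labelling, $\C$, $\C^n$, $\Alt^2(\C^n)$ and $\Sym^2_\oh(\C^n)$ correspond to $\kappa = 0$, $e_1$, $e_2$ and $2e_1$. For $n \ge 19$ we have $m \ge 9$, so $e_2$ is a genuine fundamental weight and not one of the spin weights $e_{m-1}, e_m$. For these four choices of $\kappa$, the Weyl dimension formula (Proposition~\ref{prop:dim-formula}) gives $1$, $n$, $\binom n2$ and $\frac{(n+2)(n-1)}{2}$, all of which are at most $n^2$. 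This proves the ``if'' direction.

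For the converse, we show $\dim \W_\kappa > n^2$ for every other $\kappa$. We repeatedly use Lemma~\ref{lem:dimcut}: lowering any positive entry of $\kappa$ strictly decreases the dimension. So it suffices to check a finite list of \emph{minimal} weights outside the allowed set; every other $\kappa$ dominates one of them entrywise. We split into cases as follows.

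\textbf{(i)} If $\kappa_i \ge 1$ for some $3 \le i \le m-2$, then $\dim \W_\kappa \ge \dim \W_{e_i} = \binom{n}{i} \ge \binom n3 > n^2$, since $n \ge 19$.

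\textbf{(ii)} If $\kappa_{m-1} \ge 1$ or $\kappa_m \ge 1$, then $\dim \W_\kappa \ge 2^{m-1}$, the dimension of a half-spin or spin module. This exceeds $n^2$ once $m \ge 9$; this is exactly where the hypothesis $n \ge 19$ enters, since $2^{8} = 256 < 361$ but $2^{9} = 512 > 19^2$, and we will double-check the boundary cases $n = 19, 20$.

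\textbf{(iii)} Otherwise $\kappa$ is supported on $\{1,2\}$ and is not one of the four listed weights. Then $\kappa$ dominates one of $3e_1$, $e_1 + e_2$ or $2e_2$. Their dimensions are, respectively,
\begin{itemize}
\item $\binom{n+2}{3} - n$ for the traceless symmetric cubes;
\item $\frac{n(n-2)(n+2)}{3}$ for the hook $(2,1)$ traceless part;
\item the dimension of the Weyl-like tensor module, of order $n^4/12$.
\end{itemize}
Each is cubic or quartic in $n$, so each exceeds $n^2$ for $n \ge 19$ (indeed for much smaller $n$).

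We expect the main obstacle to be getting the Weyl dimension formula for $\so_n(\C)$ to produce these closed forms cleanly in both parities $n = 2m+1$ and $n = 2m$. In particular, the spin-type bound in (ii) requires showing that among all $\kappa$ with $\kappa_{m-1} + \kappa_m \ge 1$ the minimum is attained at a single spin weight; Lemma~\ref{lem:dimcut} handles this. To avoid the messy general product, we will evaluate the formula only at the handful of minimal weights $e_i$, $e_{m-1}$, $e_m$, $3e_1$, $e_1+e_2$, $2e_2$. We then compare each with $n^2$ by elementary polynomial inequalities, treating small exceptional overlaps such as $i = m-2$ directly.
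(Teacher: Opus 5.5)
Your proposal is correct and takes essentially the paper's route: Lemma~\ref{lem:dimcut} reduces everything to a few minimal weights, the Weyl formula is evaluated there, and $n\ge 19$ is used only to dispose of the spin-type weights. Your case (iii), with $3e_1$, $e_1+e_2$, $2e_2$, is exactly the part the paper leaves as ``similar to Lemma~\ref{lem:slnc}'', and your allowed list $0,e_1,e_2,2e_1$ is the right one.

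One slip needs repair in (ii): the uniform bound $2^{m-1}$ fails at $n=19$, since $2^8<19^2$. For $n=2m+1$, the weight $e_{m-1}$ gives $\Alt^{m-1}(\C^n)$, which the argument of (i) already covers. Only $e_m$ is spin there, of dimension $2^m$. So the correct bound is $2^{\lfloor (n-1)/2\rfloor}$, which equals $512$ at $n=19,20$ and exceeds $n^2$ for all $n\ge 19$.
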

\begin{proof}
Since $\W$ is irreducible,  $\W \cong \W_{\kappa}$ for some $\kappa = (\kappa_1,\dots,  \kappa_m) \in \N^m$, $m \coloneqq \lfloor n/2 \rfloor$.  It suffices to show that $\dim \W_{\kappa} \le n^2$ if and only if  
\begin{enumerate}[(a)]
\item $n = 2m$ and $\kappa \in \N^m$ is one of the following $m$-tuples: 
\[
(0,\dots,  0),  \; (1,0,  \dots,  0),  \; (0,\dots, 0,1,0,0),  \; (0,1,0,\dots,  0),  \; (0,\dots,  0,1,0,0,0),  \;(2,0,\dots,  0);
\]
\item $n = 2m+1$ and $\kappa \in \N^m$ is one of the following $m$-tuples: 
\[
(0,\dots,  0),  \; (1,0,  \dots,  0),  \; (0,\dots, 0,1,0),  \; (0,1,0,\dots,  0),  \; (0,\dots,  0,1,0,0),  \;(2,0,\dots,  0).   
\]
\end{enumerate} 
By Proposition~\ref{prop:dim-formula},  we verify that $\dim \W_\kappa \le n^2$ when $\kappa$ is one of the $m$-tuples above. It remains to show the converse. Let $\dim \W_\kappa \le n^2$.  We first show that $\kappa_m  = 0$.  If $\kappa_m  \ge 1$,  then Lemma~\ref{lem:dimcut} together with the assumption that $n \ge 19$ leads to  a contradiction: 
\[ 
\dim \W_\kappa \ge \dim \W_{0,\dots,0,1} = 2^m > n^2.
\]
Next we observe that $\kappa_{m-1} = 0$ when $n = 2m$; otherwise Lemma~\ref{lem:dimcut} gives
\[
\dim \W_\kappa \ge \dim \W_{0,\dots,0,1,0}  = 2^{m-1} > n^2.
\] 
Thus we obtain 
\[
\kappa = \begin{cases}
    (\kappa_1,\dots,\kappa_{m-2},0,0) &\text{if } n = 2m,  \\
    (\kappa_1,\dots,\kappa_{m-1},0) &\text{if } n = 2m+1.
\end{cases}
\]
The rest of the proof is similar to that of Lemma~\ref{lem:slnc}, using Proposition~\ref{prop:dim-formula}.
\end{proof}

\begin{theorem}[Classification of faithful representations of $\SO_n(\C)$-spaces]\label{thm:sonc-classification}
Let $n \ge 19$. Let $\H \subsetneq \SO_n(\C)$ be a closed subgroup and $\W$ be a complex $\SO_n(\C)$-module of dimension at most $n^2$.  The homogeneous space $\SO_n(\C)/\H$ admits a faithful representation in $\W$ if and only if there exist integers $0\le b \le n$, $0\le c,d \le 2$ satisfying $(\frac{c+d}{2} - 1) n^2 + (b + \frac{d-c}{2})n - d \le 0$ and 
\[
\W \cong \C^{n \times b}\oplus \Alt^2(\C^n)^{\oplus c} \oplus \Sym_\oh^2(\C^n)^{\oplus d},  \quad
\H =\SO_n(\C) \cap  \H_1 \cap \biggl(\bigcap_{i=1}^c \H_{2,i} \biggr) \cap \biggl(\bigcap_{j=1}^d \H_{3,j} \biggr), 
\]
where 
\begin{align*}
\H_1 &= Q \P\bigl(I_r, \SL_{n-r}(\C)\bigr)Q^\ast, \quad
\H_{2,i} = U_i \P\bigl(\Sp_{2r_i}(\C; \Omega_i),  \SL_{n-2r_i}(\C)\bigr) U_i^\ast,   \\
\H_{3,j} &= V_j \P\bigl(\O_{s_j}(\C; B_j),  \GL_{n-s_j}(\C)\bigr) V_j^\ast, 
\end{align*}  
for some integers $0 \le r \le b$,  $0 \le r_i \le n/2$,  $0 \le s_j \le n$ and matrices $Q, U_i, V_j \in \Un_n$,  $\Omega_i \in \Alt^2(\R^{2r_i}) \cap \GL_{2r_i}(\R)$,  $B_j \in \S^2(\R^{s_j}) \cap \GL_{s_j}(\R)$ with $1 \le i \le c$ and $1 \le j \le d$.
\end{theorem}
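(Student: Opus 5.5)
The plan follows the proof of Theorem~\ref{thm:slnc-classification}, with Lemma~\ref{lem:sonc} in place of Lemma~\ref{lem:slnc}.

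\emph{Sufficiency.} Suppose $\W$ and $\H$ have the stated form. Each $\H_1$, $\H_{2,i}$, $\H_{3,j}$ is, by Lemmas~\ref{lem:rectangle}, \ref{lem:antisymmetric}, and~\ref{lem:symmetric}, the $\GL_n(\C)$-stabilizer of a suitable element, up to the determinant-one normalization built into $\P(I_r,\SL_{n-r}(\C))$ and $\P(\Sp_{2r_i}(\C;\Omega_i),\SL_{n-2r_i}(\C))$, which is immaterial after intersecting with $\SO_n(\C)$. The elements are: a matrix $X_1\in\C^{n\times b}$ under left multiplication, $X_{2,i}\in\Alt^2(\C^n)$ under congruence, and $X_{3,j}\in\Sym^2(\C^n)$ under congruence.

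For $Q\in\SO_n(\C)$ we have $Q^\tp=Q^{-1}$, so congruence agrees with conjugation. Hence the adjoint action on $\Alt^2(\C^n)=\so_n(\C)$ and the natural action on $\Sym^2_\oh(\C^n)$ are both the congruence action. The trace is invariant, so $X_{3,j}$ may be replaced by its traceless part $X_{3,j}-\frac{\tr X_{3,j}}{n}I_n$. This does not change its stabilizer inside $\SO_n(\C)$, because $I_n$ is fixed by $\O_n(\C)$.

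By \eqref{eq:stabilizer}, the $\SO_n(\C)$-stabilizer of each component is $\SO_n(\C)$ intersected with the corresponding $\GL_n(\C)$-stabilizer. Proposition~\ref{prop:min-sum}, or directly \eqref{eq:orb-cap} together with Lemma~\ref{lem:embedding}, then shows that the stabilizer of the tuple is exactly $\H$. Consequently $g\H\mapsto g\cdot(X_1,X_{2,\bullet},X_{3,\bullet})$ is a faithful representation.

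\emph{Necessity.} Let $\varepsilon:\SO_n(\C)/\H\to\W$ be faithful. Since $\SO_n(\C)$ is reductive, Proposition~\ref{prop:min-sum} decomposes $\W$ into irreducibles, each of dimension at most $n^2$. Lemma~\ref{lem:sonc} says each summand is one of $\C$, $\C^n$, $\Alt^2(\C^n)$, $\Sym^2_\oh(\C^n)$. Trivial summands contribute stabilizer $\SO_n(\C)$ and can be dropped, or absorbed, without changing $\H$. The number $b$ of copies of $\C^n$ is collected into $\C^{n\times b}$, using the identification $\C^{n\times b}\subseteq\C^{n\times n}$.

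Computing $\dim\W$ gives
\[
bn+c\tfrac{n(n-1)}{2}+d\tfrac{(n+2)(n-1)}{2}\le n^2 .
\]
Rearranged, this is exactly the stated inequality, and it forces $c,d\le2$. Proposition~\ref{prop:min-sum} writes $\H$ as the intersection of the component stabilizers. The block $\C^{n\times b}$ gives a single stabilizer by Lemma~\ref{lem:rectangle} applied to the whole $n\times b$ matrix. The other blocks are handled by Lemmas~\ref{lem:antisymmetric} and~\ref{lem:symmetric}, each intersected with $\SO_n(\C)$ via \eqref{eq:stabilizer}.

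\emph{Main obstacle.} Two points need care. First, the $\SO_n(\C)$-action on $\Sym^2_\oh$ and $\Alt^2$ must be identified with congruence, so that the congruence-stabilizer lemmas apply rather than the similarity lemma. Second, the normal-form parameters must be brought into the stated shape: the lemmas give $Q\in\Un_n$ with real canonical $\Omega_i$ and $B_j$, obtained from the Youla and Takagi forms.

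Strictly speaking, Lemma~\ref{lem:sonc} only lists modules up to isomorphism. So I will also check that each listed module, as a matrix space, carries the expected action. For the two $2$-form modules this is the conjugation-equals-congruence observation above.
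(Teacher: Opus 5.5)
Your proposal is correct and follows the paper's route: Proposition~\ref{prop:min-sum} together with Lemma~\ref{lem:sonc} gives the module decomposition, the dimension count gives the constraints on $b,c,d$, and Lemmas~\ref{lem:rectangle}--\ref{lem:symmetric} with \eqref{eq:stabilizer} identify the stabilizers. Your extra remarks on congruence agreeing with conjugation for $\SO_n(\C)$, replacing $X_{3,j}$ by its traceless part, and discarding trivial summands only make explicit details the paper leaves implicit.
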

\begin{proof}
By Proposition~\ref{prop:min-sum} and Lemma~\ref{lem:sonc},  $\W$ and $\H$ take the forms above.  The constraints on the integers $b$, $c$, and $d$ follow from $\dim \W \le n^2$.  To determine $\H_1,  \H_{2,i}$ and $\H_{3,j}$,  we compute the stabilizers of matrices in $\C^{n\times b}$,  $\Alt^2(\C^n)$, and $\Sym_\oh^2(\C^n)$ under left multiplication and congruence by $\GL_n(\C)$, using Lemmas~\ref{lem:rectangle}--\ref{lem:symmetric}.  
\end{proof}

\subsection{Faithful linear representations of $\SO_{p,n-p}$-spaces}\label{sec:sopq}

A complex module $\W$ of a real Lie group $\G$ is \emph{real} if $\W = \U \otimes \C$ for some real module $\U$,  and  is  \emph{quaternionic} if it is contained in a quaternionic module \cite[Section~26.3]{Fulton_Harris}.

\begin{lemma}[Low dimension $\SO_{p,n-p}$-module]\label{lem:sopq}
Let $n \ge 19$ and $0 \le p \le n$.  Let $\U$ be an irreducible real $\SO_{p,n-p}$-module.  The dimension of $\U$ is at most $n^2$ if and only if $\U$ isomorphic to $\R$,  $\R^n$,  $\Alt^2(\R^n;I_{p,n-p})$, or $\Sym^2_\oh(\R^n;I_{p,n-p})$.
\end{lemma}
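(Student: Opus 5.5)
The approach is to complexify and reduce to Lemma~\ref{lem:sonc}. Let $\U$ be an irreducible real $\SO_{p,n-p}$-module with $\dim_\R \U \le n^2$, and set $\W \coloneqq \U \otimes_\R \C$. This is a complex $\SO_{p,n-p}$-module with $\dim_\C \W = \dim_\R \U \le n^2$. Since $\SO_{p,n-p}$ is a real form of $\SO_n(\C)$, complex $\SO_{p,n-p}$-modules are the same as complex $\SO_n(\C)$-modules (more precisely, $\mathfrak{so}_n(\C)$-modules, as in the proof of Lemma~\ref{lem:comp-real}). We then invoke the standard dichotomy for an irreducible real module (\cite[Section~26.3]{Fulton_Harris}). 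Either $\W$ is irreducible, meaning $\U$ is of real type. Or $\W \cong \W' \oplus \overline{\W'}$ for some irreducible complex $\W'$ that is not real, meaning $\U$ is of complex or quaternionic type, in which case $\U$ is $\W'$ regarded as a real space. In the second case $\dim_\C \W' = \tfrac12 \dim_\R \U \le n^2/2$.

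\textbf{Real type.} Here $\W$ is an irreducible $\SO_n(\C)$-module of dimension at most $n^2$, so Lemma~\ref{lem:sonc} (which needs $n \ge 19$) forces $\W$ to be one of $\C$, $\C^n$, $\Alt^2(\C^n)$, $\Sym^2_\oh(\C^n)$. Each of these has an obvious real form for $\SO_{p,n-p} = \SO_n(\R; I_{p,n-p})$:
\begin{itemize}
\item $\R$ and $\R^n$;
\item $\Alt^2(\R^n; I_{p,n-p}) = \so_{p,n-p}$, the adjoint module;
\item $\Sym^2_\oh(\R^n; I_{p,n-p})$.
\end{itemize}
The last is the traceless part of the space of $I_{p,n-p}$-self-adjoint matrices under similarity. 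Congruence on symmetric matrices corresponds to similarity on $X \mapsto I_{p,n-p}X$, since $Q^{-\tp} = I_{p,n-p} Q I_{p,n-p}$ for $Q\in\SO_{p,n-p}$. I check that each of these real forms complexifies to the corresponding complex module, which Table~\ref{tab:vec} confirms by matching dimensions. A real form of a real-type irreducible module is unique up to isomorphism: two real forms $\U_1,\U_2$ with $\U_1\otimes\C\cong\U_2\otimes\C$ irreducible give $\U_1\cong\U_2$. So $\U$ is one of the four listed modules. For the converse I only need to confirm that each listed module is irreducible over $\R$ with dimension at most $n^2$, which is immediate from Table~\ref{tab:vec} and from the irreducibility of the complexifications.

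\textbf{Complex or quaternionic type.} Here $\W'$ is an irreducible $\SO_n(\C)$-module of dimension at most $n^2/2 \le n^2$, so it is again one of the four modules above. Each of them is self-conjugate and real: it is the complexification of an explicit real module. Hence $\W' \cong \overline{\W'}$ and $\W'$ is of real type, which contradicts the assumption that $\W'$ is not real. So this case cannot occur.

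The main obstacle I expect is the bookkeeping in the dichotomy. I have to be careful that $\mathfrak{so}_{p,n-p}$-irreducibility and $\mathfrak{so}_n(\C)$-irreducibility of $\W$ agree. This holds because $\mathfrak{so}_{p,n-p}\otimes\C = \mathfrak{so}_n(\C)$ and $\SO_{p,n-p}$ acts through its identity component up to finite index. I also have to make sure Lemma~\ref{lem:sonc} is applied at the level of Lie algebra modules. Disconnectedness of $\SO_{p,n-p}$ could a priori split a module into two components permuted by the component group. Even so, each summand would still be one of the four small modules, and I will argue that the listed real modules are already irreducible under the identity component.
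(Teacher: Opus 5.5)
Your argument is correct and is essentially the paper's proof: complexify, apply Lemma~\ref{lem:sonc}, and use the real/complex/quaternionic trichotomy, excluding the non-real types because $\C$, $\C^n$, $\Alt^2(\C^n)$, $\Sym^2_\oh(\C^n)$ are complexifications of the four listed real modules (your uniqueness-of-real-forms step just makes explicit the paper's closing ``thus $\U$ is one of''). The one thing to change is the disconnectedness patch in your last paragraph, which cannot succeed for the full group: for $0<p<n$ the character $\operatorname{sgn}\colon \SO_{p,n-p}\to \SO_{p,n-p}/\SO_{p,n-p}^{0}\cong\{\pm 1\}$ gives irreducible modules such as $\R^n\otimes\operatorname{sgn}\not\cong\R^n$ of dimension $n$, so the lemma (like the paper's proof, which ignores this) must be read for $\mathfrak{so}_{p,n-p}$, equivalently for the identity component, and under that reading your main argument is already complete.
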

\begin{proof}
Let $\U_0 \in \{ \R , \R^n,  \Alt^2(\R^n,I_{p,n-p}), \Sym^2_\oh(\R^n,I_{p,n-p}) \}$.  We first verify that 
\[
\W_0 \coloneqq \U_0 \otimes \C \in \{ \C,  \C^n,  \Alt^2(\C^n), \Sym^2_\oh(\C^n) \},
\]
showing that $\C$,  $\C^n$,  $\Alt^2(\C^n)$, $\Sym^2_\oh(\C^n)$ are real in the sense of the comment preceding this lemma. This is clearly the case if $\U_0 = \R$ or $\R^n$. For $\U_0 = \Alt^2(\R^n,I_{p,n-p})$, consider the complex linear map
\[
\varphi: \W_0 = \Alt^2(\R^n,I_{p,n-p}) \oplus i \Alt^2(\R^n,I_{p,n-p}) \to \Alt^2(\C^n),\quad \varphi(X_1,  i X_2) = I_{p,n-p}(X_1 + i X_2).
\]
Since $(X_1 + i X_2)^\tp I_{p,n-p}= -I_{p,n-p}(X_1 + i X_2)$,  $\varphi$ is well-defined.  A direct calculation verifies that $\varphi$ is bijective and $\SO_n(\C)$-equivariant.  Thus $\W_0 \cong \Alt^2(\C^n)$.  The proof for $\U_0 = \Sym^2_\oh(\R^n,I_{p,n-p})$ is nearly identical and thus omitted.  

Next,  let $\U$ be an irreducible real $\SO_{p,n-p}$-module of dimension at most $n^2$.  By Lemma~\ref{lem:sonc},  $\W \coloneqq \U \otimes \C$ is isomorphic to a direct sum of $\C$,  $\C^n$,  $\Alt^2(\C^n)$, and $\Sym^2_\oh(\C^n)$.  By Schur's Lemma,  the algebra $\End_{\SO_{p,n-p}} (\U)$ consisting of $\SO_{p,n-p}$-equivariant endomorphisms of $\U$ is a division algebra over $\R$.  Therefore  $\End_{\SO_{p,n-p}} (\U) \cong \R$, $\C$, or $\mathbb{H}$, and, accordingly,  $\W$ is either irreducible, splits into the direct sum of two conjugate modules, or splits into the direct sum of two isomorphic modules. Since $\C$,  $\C^n$,  $\Alt^2(\C^n)$, and $\Sym^2_\oh(\C^n)$ are all self-conjugate modules, we rule out $\End_{\SO_{p,n-p}} (\U) \cong \C$. So $\End_{\SO_{p,n-p}} (\U) \cong \R$ or $\mathbb{H}$ and  
\[
\W = \C^{\oplus a} \oplus (\C^n)^{\oplus b} \oplus \Alt^2(\C^n)^{\oplus c} \oplus \Sym^2_\oh(\C^n)^{\oplus d},
\]
where either $a + b + c + d = 1$ or one of $a,b,c,d$ is $2$ and the others are $0$. Suppose the latter holds. Then $\W = \W_1^{\oplus 2}$ for some $\W_1 \in \{ \C,  \C^n,  \Alt^2(\C^n), \Sym^2_\oh(\C^n) \} $, implying that $\End_{\SO_{p,n-p}} (\U) \cong \mathbb{H}$. But this in turn implies that $\W_1$ is quaternionic, contradicting the fact that the modules $\C$,  $\C^n$,  $\Alt^2(\C^n)$, $\Sym^2_\oh(\C^n)$ are all real. We conclude that $\W$ is one of $\C$,  $\C^n$,  $\Alt^2(\C^n)$,  or $\Sym^2_\oh(\C^n) $ and thus $\U$ is one of $\R$,  $\R^n$,  $\Alt^2(\R^n;I_{p,n-p})$, or $\Sym^2_\oh(\R^n;I_{p,n-p})$.
\end{proof}
In conjunction with Lemma~\ref{lem:sopq}, we repeat our argument in the proof of Theorem~\ref{thm:sonc-classification}, noting that the unitary matrices $P$,  $Q_i$,  $R_j$ therein are now orthogonal, which brings us to the following classification.
\begin{proposition}[Classification of faithful representations of $\SO_{p,n-p}$-spaces]\label{prop:sopq-classification}
Let $n \ge 19$. Let $\H \subsetneq \SO_{p,n-p}$ be a closed subgroup and $\U$ be a real $\SO_{p,n-p}$-module of dimension at most $n^2$.  The homogeneous space $\SO_{p,n-p}/\H$ admits a faithful representation in $\U$ if and only if there exist integers $0\le b \le n$; $ 0\le c,d \le 2$ satisfying $(\frac{c+d}{2} - 1) n^2 + (b + \frac{d-c}{2})n - d \le 0$ and 
\[
\U \cong \R^{n \times b}\oplus \Alt^2(\R^n;I_{p,n-p})^{\oplus c} \oplus \Sym_\oh^2(\R^n;I_{p,n-p})^{\oplus d},  \quad
\H =\SO_{p,n-p}  \cap  \H_1 \cap \biggl(\bigcap_{i=1}^c \H_{2,i} \biggr) \cap \biggl(\bigcap_{j=1}^d \H_{3,j}\biggr),  
\]
where
\begin{align*}
\H_1 &= Q \P\bigl(I_r, \SL_{n-r}(\R)\bigr) Q^\tp, 
\quad \H_{2,i} = U_i \P\bigl(\Sp_{2r_i}(\R; \Omega_i),  \SL_{n-2r_i}(\R)\bigr) U_i^\tp ,    \\
\H_{3,j} &= V_j \P\bigl(\O_{s_j}(\R; B_j),  \GL_{n-s_j}(\R)\bigr) V_j^\tp,
\end{align*}  
for some integers $0 \le r \le b$,  $0 \le r_i \le n/2$,  $0 \le s_j \le n$ and matrices $Q,U_i,V_j \in \O_n(\R)$,  $\Omega_i  \in \Alt^2(\R^{2r_i}) \cap \GL_{2r_i}(\R)$,  $B_j \in \S^2(\R^{s_j}) \cap \GL_{s_j}(\R)$,  with $1 \le i \le c$ and $1 \le j \le d$.
\end{proposition}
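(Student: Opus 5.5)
The plan mirrors the proof of Theorem~\ref{thm:sonc-classification}, with Lemma~\ref{lem:sopq} taking the place of Lemma~\ref{lem:sonc} and orthogonal matrices replacing unitary ones.

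\textbf{Step 1 (sufficiency).} Suppose $\U$ and $\H$ have the stated form. Choose matrices $X_1\in\R^{n\times b}$, $Y_i\in\Alt^2(\R^n;I_{p,n-p})$ and $Z_j\in\Sym^2_\oh(\R^n;I_{p,n-p})$ whose $\GL_n(\R)$-stabilizers are $\H_1$, $\H_{2,i}$ and $\H_{3,j}$. By \eqref{eq:stabilizer}, the $\SO_{p,n-p}$-stabilizer of each is its $\GL_n(\R)$-stabilizer intersected with $\SO_{p,n-p}$. By \eqref{eq:orb-cap}, the stabilizer of the tuple $(X_1,Y_1,\dots,Z_d)$ is then $\H$. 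Lemma~\ref{lem:embedding} now supplies the embedding $g\H\mapsto g\cdot(X_1,Y_1,\dots,Z_d)$.

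\textbf{Step 2 (necessity).} Conversely, suppose $\SO_{p,n-p}/\H$ embeds into $\U$. Since $\SO_{p,n-p}$ is reductive, Proposition~\ref{prop:min-sum} decomposes $\U$ into irreducibles and writes $\H$ as the intersection of their stabilizers. By Lemma~\ref{lem:sopq}, each irreducible summand is one of $\R$, $\R^n$, $\Alt^2(\R^n;I_{p,n-p})$, or $\Sym^2_\oh(\R^n;I_{p,n-p})$. Trivial summands have stabilizer all of $\SO_{p,n-p}$, so they can be dropped or absorbed.

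The constraints on $b,c,d$ come from $bn+c\,n(n-1)/2+d\,(n+2)(n-1)/2\le n^2$. Multiplying out gives the stated quadratic inequality, and it forces $c,d\le 2$ and $b\le n$.

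\textbf{Step 3 (stabilizers).} $\SO_{p,n-p}$ acts on $\R^{n\times b}$ by left multiplication, so Lemma~\ref{lem:rectangle} applies directly. The main point needing care is that $\SO_{p,n-p}$ acts on $\Alt^2(\R^n;I_{p,n-p})$ by similarity $X\mapsto QXQ^{-1}$, not by congruence. For $Q\in\SO_{p,n-p}$ we have $Q^{-1}=I_{p,n-p}Q^\tp I_{p,n-p}$. Hence the map $X\mapsto XI_{p,n-p}$, the real analogue of the map $\varphi$ in Lemma~\ref{lem:sopq}, intertwines this similarity action with congruence on $\Alt^2(\R^n)$. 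The same map intertwines the action on $\Sym^2_\oh(\R^n;I_{p,n-p})$ with congruence on $\Sym^2(\R^n)$.

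Stabilizers therefore become $\GL_n(\R)$-congruence stabilizers intersected with $\SO_{p,n-p}$, which Lemmas~\ref{lem:antisymmetric} and \ref{lem:symmetric} describe with real orthogonal $Q,U_i,V_j$. Intersecting everything with $\SO_{p,n-p}$ lets one replace $\GL_{n-r}(\R)$ by $\SL_{n-r}(\R)$ without changing $\H$, which recovers the stated form.
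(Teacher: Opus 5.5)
Your route is the paper's: Proposition~\ref{prop:min-sum} and Lemma~\ref{lem:sopq} fix the shape of $\U$, and Lemmas~\ref{lem:rectangle}--\ref{lem:symmetric} with orthogonal conjugators give the stabilizers. Your intertwiner $X\mapsto XI_{p,n-p}$, which turns the similarity action of $\SO_{p,n-p}$ into congruence, makes precise what the paper only summarizes as ``repeat the argument.''

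One step of your sufficiency argument fails as written, however. You cannot in general pick $Z_j\in\Sym^2_\oh(\R^n;I_{p,n-p})$ so that the $\GL_n(\R)$-congruence stabilizer of $Z_jI_{p,n-p}$ is exactly $\H_{3,j}$. The image of $\Sym^2_\oh(\R^n;I_{p,n-p})$ under $X\mapsto XI_{p,n-p}$ is only the hyperplane $\{Y\in\Sym^2(\R^n):\tr(YI_{p,n-p})=0\}$. This hyperplane is not $\GL_n(\R)$-stable, and the matrices realizing a prescribed $\H_{3,j}$ need not lie in it. For example, take $p=n$, $V_j=I_n$, $s_j=n$, $B_j=I_n$, so that $\H_{3,j}=\O_n(\R)$. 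The only symmetric matrices with congruence stabilizer $\O_n(\R)$ are nonzero multiples of $I_n$, and none of them is traceless.

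The fix uses only that $\H$ is intersected with $\SO_{p,n-p}$, just as in your $\SL$-versus-$\GL$ remark. Take $Y_j=V_j\diag(B_j^{-1},0)V_j^\tp$, whose $\GL_n(\R)$-congruence stabilizer is $\H_{3,j}$. Set $Z_j'=Y_jI_{p,n-p}\in\Sym^2(\R^n;I_{p,n-p})$ and $Z_j=Z_j'-\tfrac1n\tr(Z_j')I_n$. Then $Z_j\in\Sym^2_\oh(\R^n;I_{p,n-p})$. Subtracting a multiple of $I_n$ does not change similarity stabilizers, so $(\SO_{p,n-p})_{Z_j}=(\SO_{p,n-p})_{Z_j'}=\SO_{p,n-p}\cap\H_{3,j}$. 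Equivalently, $\SO_{p,n-p}$ fixes $I_{p,n-p}$ under congruence, so shifting $Y_j$ by multiples of $I_{p,n-p}$ leaves its $\SO_{p,n-p}$-stabilizer unchanged. Step~1 should therefore be phrased in terms of $\SO_{p,n-p}$-stabilizers throughout, not $\GL_n(\R)$-stabilizers.

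Separately, ``dropping'' trivial summands in Step~2 does not give the stated form of $\U$, which contains no copy of $\R$. The conclusion about $\U$ holds only up to trivial summands; the paper's statement and proof share this imprecision.
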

Proposition~\ref{prop:sopq-classification} includes the important special case $\SO_n(\R) = \SO_{n,0}$; but in this case there is some simplification due to the following:  For any $Q \in \O_n(\R)$ and any subgroup $\G \subseteq \GL_n(\R)$,  it holds that $Q \G Q^\tp \cap \SO_n(\R) =Q \bigl( \G \cap \SO_{n}(\R) \bigr)Q^\tp$.
\begin{corollary}[Classification of faithful representations of $\SO_n(\R)$-spaces]\label{cor:sonr-classification}
Let $n \ge 19$. Let $\H \subsetneq \SO_n(\R)$ be a closed subgroup and $\U$ be a real $\SO_n(\R)$-module of dimension at most $n^2$.  The homogeneous space $\SO_n(\R)/\H$  admits a faithful representation in $\U$ if and only if there exist nonnegative integers $b \le n$; $c,d \le 2$ satisfying $(\frac{c+d}{2} - 1) n^2 + (b + \frac{d-c}{2})n - d \le 0$, and 
\[
\U \cong \R^{n \times b}\oplus \Alt^2(\R^n)^{\oplus c} \oplus \Sym_\oh^2(\R^n)^{\oplus d},  \quad
\H = \H_1 \cap \biggl(\bigcap_{i=1}^c \H_{2,i}\biggr) \cap \biggl(\bigcap_{j=1}^d \H_{3,j}\biggr),
\]
where
\begin{align*}   
\H_1 &= Q \bigl( \{I_r\} \times \SO_{n-r}(\R) \bigr) Q^\tp, \quad
\H_{2,i} = U_i \bigl( \Sp_{2r_i}(\R; \Omega_i) \cap \SO_{2r_i}(\R)  \times  \SO_{n-2r_i}(\R) \bigr) U_i^\tp,   \\
\H_{3,j} &= V_j \bigl(  \S(\O_{s_j}(\R; B_j) \times \O_{n-s_j}(\R)) \bigr) V_j^\tp,
\end{align*}  
for some integers $0 \le r \le b$,  $0 \le r_i \le n/2$,  $0 \le s_j \le n$ and matrices $Q, U_i, V_j \in \O_n(\R)$,  $\Omega_i \in \Alt^2(\R^{2r_i}) \cap \GL_{2r_i}(\R)$, $B_j \in \S^2(\R^{s_j}) \cap \GL_{s_j}(\R)$, with  $1 \le i \le c$ and $1 \le j \le d$.
\end{corollary}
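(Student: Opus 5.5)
The plan is to specialize Proposition~\ref{prop:sopq-classification} to $p = n$, i.e.\ $\SO_{n,0} = \SO_n(\R)$. There $I_{n,0} = I_n$, so $\Alt^2(\R^n;I_{n,0}) = \Alt^2(\R^n)$ and $\Sym^2_\oh(\R^n;I_{n,0}) = \Sym^2_\oh(\R^n)$. The module $\U$ and the numerical constraints on $b,c,d$ then transfer verbatim. All that remains is to compute each intersection $\SO_n(\R) \cap \H_1$, $\SO_n(\R)\cap\H_{2,i}$, $\SO_n(\R)\cap \H_{3,j}$ explicitly. Since $\H = \SO_n(\R)\cap \H_1 \cap \bigcap_i \H_{2,i} \cap \bigcap_j \H_{3,j}$, we may distribute the intersection with $\SO_n(\R)$ over each factor and recover the stated form of $\H$.

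To compute these intersections, we use the observation preceding the corollary: for $Q \in \O_n(\R)$, $Q\G Q^\tp \cap \SO_n(\R) = Q(\G \cap \SO_n(\R))Q^\tp$. This holds because conjugation by an orthogonal matrix preserves $\SO_n(\R)$. It therefore suffices to intersect the block-upper-triangular groups $\P(\G_1,\G_2)$ with $\SO_n(\R)$.

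The key elementary step is that an orthogonal block upper triangular matrix $\bigl[\begin{smallmatrix} A_1 & C\\ 0 & A_2\end{smallmatrix}\bigr]$ is block diagonal with orthogonal diagonal blocks. Indeed, its first block column has unit-norm columns forming $A_1$, so $A_1 \in \O$. Orthogonality of the columns then forces $A_1^\tp C = 0$, hence $C=0$, and so $A_2 \in \O$. Applying this gives the three cases:
\begin{itemize}
\item $\P(I_r,\SL_{n-r}(\R))\cap \SO_n(\R) = \{I_r\}\times \SO_{n-r}(\R)$;
\item $\P(\Sp_{2r_i}(\R;\Omega_i),\SL_{n-2r_i}(\R))\cap\SO_n(\R) = (\Sp_{2r_i}(\R;\Omega_i)\cap \O_{2r_i}(\R))\times \SO_{n-2r_i}(\R)$;
\item $\P(\O_{s_j}(\R;B_j),\GL_{n-s_j}(\R))\cap \SO_n(\R) = \S(\O_{s_j}(\R;B_j)\cap\O_{s_j}(\R)\times \O_{n-s_j}(\R))$.
\end{itemize}

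Two small points need care. In the second case, the determinant condition must be checked. Symplectic matrices have determinant $1$, so the first factor lies in $\SO_{2r_i}(\R)$. Since $\det A_2 = 1$ is already imposed by $\SL$, the product is $(\Sp_{2r_i}(\R;\Omega_i)\cap \SO_{2r_i}(\R)) \times \SO_{n-2r_i}(\R)$, matching the statement. In the third case, the statement writes $\O_{s_j}(\R;B_j)$ for what is really $\O_{s_j}(\R;B_j)\cap \O_{s_j}(\R)$. I would handle this by noting that we can choose $B_j$ diagonal: Lemma~\ref{lem:symmetric} produces $B_j = \diag(\lambda_1 I_{n_1},\dots)$ via the spectral decomposition. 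Then $\O_{s_j}(\R;B_j)\cap \O_{s_j}(\R) = \O_{n_1}(\R)\times\cdots$, which can be read as the intended group. I would state this convention explicitly.

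The main obstacle is not conceptual but bookkeeping. One must ensure the $\SO_n(\R)$ intersection, taken once globally in Proposition~\ref{prop:sopq-classification}, may be absorbed into each factor. This holds since $\SO_n(\R)\cap\bigcap_k \H_k = \bigcap_k (\SO_n(\R)\cap \H_k)$. One must also handle the interpretive issue with $B_j$ described above.
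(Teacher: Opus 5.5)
Your proposal is correct and follows the paper's own route: specialize Proposition~\ref{prop:sopq-classification} to $p=n$ and use $Q\G Q^\tp\cap\SO_n(\R)=Q(\G\cap\SO_n(\R))Q^\tp$ for $Q\in\O_n(\R)$, with your observation that an orthogonal block upper triangular matrix is block diagonal supplying the computation the paper leaves implicit. One simplification: you do not need to reinterpret $\O_{s_j}(\R;B_j)$ or choose $B_j$ diagonal, because $\H_1\subseteq\SO_n(\R)$ always (it equals $\SO_n(\R)$ when $r=0$), so intersecting with $\H_1$ already cuts $V_j\S(\O_{s_j}(\R;B_j)\times\O_{n-s_j}(\R))V_j^\tp$ down to $V_j\S\bigl((\O_{s_j}(\R;B_j)\cap\O_{s_j}(\R))\times\O_{n-s_j}(\R)\bigr)V_j^\tp$, and the statement holds exactly as written.
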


\section{Faithful linear representations of Type~$\mathrm{C}$ spaces}\label{sec:C}

In this section we will find all homogeneous spaces $\G/\H$ that have faithful representations, where $\G$ is the complex,  real, or compact symplectic group.  We begin by observing that for any $Q \in \GL_{2n}(\F)$ and $\G \subseteq \GL_{2n}(\F)$,
\[
Q\G Q^{-1} \cap \Sp_{2n}(\F;\Omega) = \G \cap \Sp_{2n}(\F; Q^\tp \Omega Q).
\]
As a reminder of the notations set in Section~\ref{sec:gp}, we write $J_{2n} \coloneqq \begin{bsmallmatrix}
0 & I_n \\
-I_n & 0
\end{bsmallmatrix}$ and
\begin{align*}
\Sym^2_\oh(\C^{2n};J_{2n}) &= \biggl\{\begin{bmatrix}
  X &D\\
  C &X^\tp
\end{bmatrix} \in \C^{2n \times 2n} : C,D \in \Alt^2(\C^n), \, X \in \sl_n(\C)\biggr\},  \\
\Alt^2(\C^{2n}; J_{2n}) &= \biggl\{\begin{bmatrix}
  X &D\\
  C & -X^\tp
\end{bmatrix} \in \C^{2n \times 2n} : C,D \in \Sym^2(\C^n)\biggr\}.
\end{align*}

\subsection{Faithful linear representations of $\Sp_{2n}(\C)$-spaces}\label{sec:spnc}

By inductively applying Lemma~\ref{lem:dimcut} and Proposition~\ref{prop:dim-formula},  we obtain the following characterization of irreducible $\Sp_{2n}(\C)$-modules.
\begin{lemma}[Low dimension $\Sp_{2n}(\C)$-modules]\label{lem:spnc}
Let $n \ge 5$.  An irreducible complex $\Sp_{2n}(\C)$-module $\W$ has dimension at most $4n^2$ if and only if it is isomorphic to $\C$,  $\C^{2n}$,  $\Sym^2_\oh(\C^{2n};J_{2n})$, or $\Alt^2(\C^{2n}; J_{2n})$. 
\end{lemma}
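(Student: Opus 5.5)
The plan follows the template of Lemmas~\ref{lem:slnc} and \ref{lem:sonc}. Since $\W \cong \W_\kappa$ for some $\kappa = (\kappa_1,\dots,\kappa_n) \in \N^n$, it suffices to show that $\dim \W_\kappa \le 4n^2$ if and only if
\[
\kappa \in \{(0,\dots,0),\ (1,0,\dots,0),\ (0,1,0,\dots,0),\ (2,0,\dots,0)\}.
\]
These weights give dimensions $1$, $2n$, $(n-1)(2n+1)$ and $2n^2+n$ respectively, and they correspond to $\C$, $\C^{2n}$, $\Sym^2_\oh(\C^{2n};J_{2n})$ and $\Alt^2(\C^{2n};J_{2n}) \cong \sp_{2n}(\C) \cong \Sym^2(\C^{2n})$. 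The ``if'' direction is then a direct evaluation of the $\sp_n(\C)$ case of Proposition~\ref{prop:dim-formula}. Alternatively, one can note that the second fundamental module is the traceless part of $\Alt^2(\C^{2n})$, which is identified with $\Sym^2_\oh(\C^{2n};J_{2n})$ via $X \mapsto X J_{2n}$. Similarly, $\W_{2e_1} = \Sym^2(\C^{2n})$ is identified with $\Alt^2(\C^{2n};J_{2n})$. Both identifications are checked by the same kind of equivariance computation as in the proof of Lemma~\ref{lem:sopq}.

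For the ``only if'' direction, suppose $\kappa$ is not one of the four tuples. By repeatedly applying Lemma~\ref{lem:dimcut} (decreasing coordinates one at a time while staying outside the list), I will reduce to a handful of minimal offenders. I then show each offender has dimension exceeding $4n^2$. The cases are as follows.
\begin{enumerate}[(i)]
\item $\kappa_i \ge 1$ for some $3 \le i \le n$. Then $\dim \W_\kappa \ge \dim \W_{e_i} = \binom{2n}{i} - \binom{2n}{i-2}$. This is the dimension of the primitive part of $\Alt^i(\C^{2n})$, computable from Proposition~\ref{prop:dim-formula}. I will show it is minimized over $3 \le i \le n$ at $i = 3$ or at $i = n$. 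For $i=3$ the value is $\frac{2n(2n-1)(2n-2)}{6} - 2n$, which is cubic in $n$ and exceeds $4n^2$ for $n \ge 5$. For $i = n$ the value is the Catalan-type number $\frac{2}{n+2}\binom{2n+1}{n}$, which grows exponentially in $n$.
\item $\kappa_i = 0$ for $i \ge 3$ and $\kappa_1 \ge 3$. Then $\dim \W_\kappa \ge \dim \W_{3e_1} = \binom{2n+2}{3}$, which exceeds $4n^2$.
\item $\kappa_i = 0$ for $i \ge 3$ and $\kappa_2 \ge 2$. Then $\dim \W_\kappa \ge \dim \W_{2e_2}$, which is quartic in $n$.
\item $\kappa_i = 0$ for $i \ge 3$ and $\kappa_1, \kappa_2 \ge 1$. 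Then $\dim \W_\kappa \ge \dim \W_{e_1+e_2}$. This is a cubic in $n$ with leading term $\frac{8}{3}n^3$, obtained from the formula or from $\C^{2n} \otimes \W_{e_2} = \W_{e_1+e_2} \oplus \W_{e_3} \oplus \C^{2n}$, and it exceeds $4n^2$ for $n \ge 5$.
\end{enumerate}
These cases exhaust all $\kappa$ outside the list.

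I expect the main obstacle to be case (i) for intermediate $i$. Lemma~\ref{lem:dimcut} only lets us decrease coordinates and does not move weight between positions, so it does not by itself relate $\W_{e_i}$ to $\W_{e_3}$. My first approach is to use the explicit formula $\binom{2n}{i} - \binom{2n}{i-2}$ and show directly that this quantity is at least $\binom{2n}{3} - 2n$ for $3 \le i \le n$. Writing it as $\binom{2n}{i}\bigl(1 - \frac{i(i-1)}{(2n-i+1)(2n-i+2)}\bigr)$, the ratio $i(i-1)/\bigl((2n-i+1)(2n-i+2)\bigr)$ is at most $\frac{n}{n+2}$. The bound $\binom{2n}{i} \ge \binom{2n}{3}$ then gives a lower bound of order $\binom{2n}{3}/n \sim n^2$, which is too weak when $i$ is near $n$. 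So I would handle $i \le n-1$ by this estimate, since there the ratio is bounded away from $1$ and $\binom{2n}{i}$ is large. I would treat $i = n$ separately via the Catalan expression. Finally, for the borderline small values ($n = 5, 6$), I would check the inequalities numerically.
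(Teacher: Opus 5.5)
Your proposal is correct and follows the paper's route. The paper only says the lemma follows by inductively applying Lemma~\ref{lem:dimcut} together with the Weyl dimension formula of Proposition~\ref{prop:dim-formula}, as in Lemma~\ref{lem:slnc}; your four-case reduction to $\W_{e_i}$ ($3\le i\le n$), $\W_{3e_1}$, $\W_{2e_2}$ and $\W_{e_1+e_2}$ supplies the details the paper omits.

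One small repair in case (i): your phrase ``bounded away from $1$'' is not uniform in $n$. A cleaner argument uses the identity $\binom{2n}{i}-\binom{2n}{i-2}=\frac{n+1-i}{n+1}\binom{2n+2}{i}$, which shows the consecutive ratios decrease in $i$, so the minimum over $3\le i\le n$ is $\min\bigl(\frac{2n(n-2)(2n+1)}{3},\,\frac{1}{n+1}\binom{2n+2}{n}\bigr)$, and both values exceed $4n^2$ for $n\ge 5$.
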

The same argument used in the proofs of Theorems~\ref{thm:slnc-classification} and \ref{thm:sonc-classification}, in conjunction with Lemma~\ref{lem:spnc},   gives us the following classification.
\begin{theorem}[Classification of faithful representations of $\Sp_n(\C)$-spaces]\label{thm:spnc-classification}
Let $n \ge 5$. Let $\H \subsetneq \Sp_{2n}(\C)$ be a closed subgroup and $\W$ be a complex $\Sp_{2n}(\C)$-module of dimension at most $4n^2$.  Then the homogeneous space $\Sp_{2n}(\C)/\H$ admits a faithful representation in $\W$ if and only if there are nonnegative integers $b \le 2n$; $c,d \le 1$ satisfying $(c+d-2)n^2 + (b - \frac{c-d}{2})n - \frac{c}{2} \le 0$ and
\[
\W = \C^{2n \times b}\oplus \Sym^2_\oh(\C^{2n},J_{2n})^{\oplus c} \oplus \Alt^2(\C^{2n},J_{2n})^{\oplus d},\quad \H = \Sp_{2n}(\C) \cap \H_1 \cap \biggl( \bigcap_{i=1}^c \H_{2,i} \biggr) \cap  \biggl( \bigcap_{j=1}^d \H_{3,d} \biggr),
\]
where 
\begin{align*}
\H_1 &= Q \P\bigl(I_r,  \GL_{2n-r}(\C)\bigr) Q^\ast,\quad \H_{2,i} = U_i \P\bigl(\Sp_{2r_i}(\C;\Omega_i),  \GL_{2(n-r_i)}(\C)\bigr)U_i^\ast \\
\H_{3,j} &= V_j \P\bigl(\O_{s_j}(\C,B_j), \GL_{2n - s_j}(\C)\bigr) V_j^\ast ,
\end{align*} 
for some nonnegative integers $r \le b$,  $r_i \le n$,  $s_j \le 2n$ and matrices $Q, U_i, V_j \in \Un_n$,  $\Omega_i \in \Alt^2(\R^{2r_i}) \cap \GL_{2r_i}(\R)$,  $B_j \in \S^2(\R^{s_j}) \cap \GL_{s_j}(\R)$, with  $1 \le i \le c$ and $1 \le j \le d$.
\end{theorem}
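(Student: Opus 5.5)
The plan follows the template of Theorems~\ref{thm:slnc-classification} and \ref{thm:sonc-classification}. For the \emph{if} direction, suppose $\W$ and $\H$ have the stated form. Each summand $\C^{2n}$ (left multiplication), $\Sym^2_\oh(\C^{2n};J_{2n})$ and $\Alt^2(\C^{2n};J_{2n})$ (the action $X \mapsto AXA^{-1}$ for $A \in \Sp_{2n}(\C)$) is an $\Sp_{2n}(\C)$-module. Each listed $\H_1$, $\H_{2,i}$, $\H_{3,j}$, intersected with $\Sp_{2n}(\C)$, is realized as a stabilizer $\G_X$ of a suitable $X$ by \eqref{eq:stabilizer}. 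Proposition~\ref{prop:min-sum} then assembles these into a Mostow--Palais embedding of $\Sp_{2n}(\C)/\H$ into $\W$, and by Definition~\ref{def:rep} it is a faithful representation.

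For the \emph{only if} direction, take a faithful $\varepsilon:\Sp_{2n}(\C)/\H\to\W$ with $\dim\W\le 4n^2$.
\begin{enumerate}[(1)]
\item Since $\Sp_{2n}(\C)$ is linearly reductive, Proposition~\ref{prop:min-sum} splits $\W$ into irreducibles $\W_1\oplus\dots\oplus\W_k$ with $\H=\bigcap \H_i$, where $\H_i$ is a stabilizer in $\W_i$.
\item Each $\W_i$ has dimension at most $4n^2$, so by Lemma~\ref{lem:spnc} it is $\C$, $\C^{2n}$, $\Sym^2_\oh(\C^{2n};J_{2n})$ or $\Alt^2(\C^{2n};J_{2n})$. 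Trivial summands have stabilizer all of $\Sp_{2n}(\C)$, so they can be discarded.
\item Group the copies of $\C^{2n}$ into $\C^{2n\times b}$. Then count dimensions using Table~\ref{tab:vec}: $2nb$, $(n-1)(2n+1)$ and $2n^2+n$. The condition $\dim\W\le 4n^2$ gives $b\le 2n$, $c,d\le 1$, and after rearranging, the stated inequality.
\end{enumerate}

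It remains to compute stabilizers.
\begin{enumerate}[(1)]
\item On $\C^{2n\times b}$ the action is left multiplication, so Lemma~\ref{lem:rectangle} gives $\H_1$, intersected with $\Sp_{2n}(\C)$.
\item For the other two modules, transport the $J_{2n}$-twisted action to ordinary congruence. The map $X\mapsto J_{2n}X$ (or $XJ_{2n}^{-1}$) sends $\Alt^2(\C^{2n};J_{2n})$ isomorphically onto $\Sym^2(\C^{2n})$, and $\Sym^2_\oh(\C^{2n};J_{2n})$ onto a codimension-one subspace of $\Alt^2(\C^{2n})$. These maps intertwine $A X A^{-1}$ with $A^{-\tp}YA^{-1}$, using $A^\tp J A = J$.
\item Because congruence by $A^{-\tp}$ on the transported module and congruence by $A$ on the original have stabilizers differing only by inverse transpose, the stabilizer for $\Sp_{2n}(\C)$ is $\Sp_{2n}(\C)\cap\GL_{2n}(\C)_Y$ by \eqref{eq:stabilizer}. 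Lemmas~\ref{lem:antisymmetric} and \ref{lem:symmetric} then give the forms $\H_{2,i}$ and $\H_{3,j}$, with $U_i, V_j$ unitary from the Youla/Takagi normal forms.
\end{enumerate}

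The main obstacle is this last step. One must check that the intertwining and the $\P(\cdot,\cdot)$ block forms remain compatible with the inverse-transpose twist, since $\P(\G_1,\G_2)^{-\tp}$ is lower block triangular, and absorb that discrepancy into the choice of the unitary conjugating matrix. I would handle it by reordering the basis with a permutation matrix folded into $U_i$ or $V_j$. One must also confirm that the trace-zero restriction on $\Sym^2_\oh$ does not change the stabilizer. It does not, because the action fixes the trace. Finally, $\GL_{2n-r}$ versus $\SL$ in the lower block is harmless after intersecting with $\Sp_{2n}(\C)$.
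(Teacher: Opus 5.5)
Your architecture is the paper's own: its proof just reruns the template of Theorems~\ref{thm:slnc-classification} and \ref{thm:sonc-classification} with Lemma~\ref{lem:spnc}. However, two of your steps fail as written.

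\textbf{The dimension count does not give $c\le 1$.} Since $\dim\Sym^2_\oh(\C^{2n};J_{2n})=2n^2-n-1$, two copies have dimension $4n^2-2n-2\le 4n^2$. The triples $(b,c,d)=(0,2,0)$ and $(1,2,0)$ satisfy both $\dim\W\le 4n^2$ and the displayed inequality, whose left side is then $-n-1$, resp.\ $-1$. Only $d\le 1$ (and $b\le 2n$) is forced. In fact the ``only if'' direction with $c\le 1$ is false. Take $\W=\Sym^2_\oh(\C^{2n};J_{2n})^{\oplus 2}$ and $\H$ the stabilizer of a pair $(X_1,X_2)$ with $X_1\neq 0$. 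The orbit map is then a faithful representation by Lemma~\ref{lem:embedding}, yet $\W$ is not of the stated form, by uniqueness of multiplicities of irreducible summands. The slip is in the statement itself. The template argument, applied honestly, yields $c\le 2$, with the inequality doing the cutting as in Theorem~\ref{thm:sonc-classification}. That corrected statement is what your argument actually proves.

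\textbf{The transport of the similarity action is wrong as written.} With $Y=J_{2n}X$ the action becomes $Y\mapsto A^{-\tp}YA^{-1}$, so $\Sp_{2n}(\C)_X=\Sp_{2n}(\C)\cap(\GL_{2n}(\C)_Y)^{-\tp}$. This is in general not $\Sp_{2n}(\C)\cap\GL_{2n}(\C)_Y$. For $Y=e_1e_1^\tp$ the true stabilizer is cut out by $Ae_{n+1}=\pm e_{n+1}$, while $\Sp_{2n}(\C)\cap\GL_{2n}(\C)_Y$ is cut out by $Ae_1=\pm e_1$.

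A permutation cannot repair this. $\P(\G_1,\G_2)^{-\tp}$ is lower block triangular, and the block swap turns it into $\P(\G_2^{-\tp},\G_1^{-\tp})$. Its only proper nonzero invariant subspace carries the full $\GL$-action, so it is not in general conjugate to $\P(\Sp_{2r}(\C;\Omega),\GL_{2n-2r}(\C))$ or $\P(\O_s(\C;B),\GL_{2n-s}(\C))$.

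The missing identity is $A^\tp=J_{2n}A^{-1}J_{2n}^{-1}$ for $A\in\Sp_{2n}(\C)$. It gives $(AXA^{-1})J_{2n}^{-1}=A(XJ_{2n}^{-1})A^\tp$. So the other intertwiner you mention, $Y=XJ_{2n}^{-1}$, carries ordinary congruence, and $\Sp_{2n}(\C)_X=\Sp_{2n}(\C)\cap\GL_{2n}(\C)_Y$ exactly, by \eqref{eq:stabilizer}. Lemmas~\ref{lem:antisymmetric} and \ref{lem:symmetric} then give $\H_{2,i}$ and $\H_{3,j}$ with no reordering. Equivalently, $A^{-\tp}=J_{2n}AJ_{2n}^{-1}$ on $\Sp_{2n}(\C)$, so the twist is conjugation by the unitary $J_{2n}$, which can be absorbed into $U_i$ or $V_j$.

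For the ``if'' direction you also need each $\Sp_{2n}(\C)\cap\GL_{2n}(\C)_Y$ with $Y$ skew to arise from some $Y$ with $\tr(YJ_{2n})=0$. This holds because $J_{2n}$ is fixed by $\Sp_{2n}(\C)$-congruence, so removing its component does not change the stabilizer in $\Sp_{2n}(\C)$. Your remark that ``the action fixes the trace'' does not establish this.
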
 

\subsection{Faithful linear representations of $\Sp_{2n}(\R)$-spaces}\label{sec:spnr}

Every complex $\Sp_n(\R)$-module $\W$ can be written as $\W = \U \otimes \C$ for some real $\Sp_{2n}(\R)$-module \cite[Proposition~26.23]{Fulton_Harris}.  This yields the following classification.

\begin{proposition}[Classification of faithful representations of $\Sp_{2n}(\R)$-spaces]\label{prop:spnr-classification}
Let $n \ge 5$. Let $\H \subsetneq \Sp_{2n}(\R)$ be a closed subgroup and $\U$ be a real $\Sp_{2n}(\R)$-module of dimension at most $4n^2$. Then the homogeneous space $\Sp_{2n}(\R)/\H$ admits a faithful representation in $\U$ if and only if there are nonnegative integers $b \le 2n$; $c,d \le 1$ satisfying $(c+d-2)n^2 + (b - \frac{c-d}{2})n - \frac{c}{2} \le 0$ and
\[
\U \cong \R^{2n \times b}\oplus \Sym^2_\oh(\R^{2n},J_{2n})^{\oplus c} \oplus \Alt^2(\R^{2n},J_{2n})^{\oplus d},\quad \H = \Sp_{2n}(\R) \cap \H_1 \cap \biggl( \bigcap_{i=1}^c \H_{2,i} \biggr) \cap  \biggl( \bigcap_{j=1}^d \H_{3,d} \biggr),
\]
where 
\begin{align*}
\H_1 &= Q \P\bigl(I_r,  \GL_{2n-r}(\R)\bigr) Q^\ast,\quad \H_{2,i} = U_i \P\bigl(\Sp_{2r_i}(\R;\Omega_i),  \GL_{2(n-r_i)}(\R)\bigr)U_i^\ast \\
\H_{3,j} &= V_j \P\bigl(\O_{s_j}(\R,B_j), \GL_{2n - s_j}(\R)\bigr) V_j^\ast ,
\end{align*} 
for some nonnegative integers $r \le b$,  $r_i \le n$,  $s_j \le 2n$ and matrices $Q, U_i, V_j \in \O_n(\R)$,  $\Omega_i \in \Alt^2(\R^{2r_i}) \cap \GL_{2r_i}(\R)$,  $B_j \in \S^2(\R^{s_j}) \cap \GL_{s_j}(\R)$, with  $1 \le i \le c$ and $1 \le j \le d$.
\end{proposition}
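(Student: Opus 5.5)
The plan is to follow the template of Theorem~\ref{thm:slnc-classification} and Proposition~\ref{prop:slnr-classification}, transporting the complex classification of Theorem~\ref{thm:spnc-classification} to the split real form $\Sp_{2n}(\R)$.

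\emph{Step one: irreducible real modules.} I first classify the irreducible real $\Sp_{2n}(\R)$-modules $\U$ with $\dim_\R \U \le 4n^2$. Because $\Sp_{2n}(\R)$ is split, \cite[Proposition~26.23]{Fulton_Harris} says every irreducible complex $\Sp_{2n}(\C)$-module is real, i.e.\ of the form $\U_0 \otimes \C$. I will argue exactly as in the proof of Lemma~\ref{lem:sopq}:
\begin{itemize}
\item $\End_{\Sp_{2n}(\R)}(\U)$ is $\R$, $\C$, or $\mathbb{H}$.
\item The $\C$ case is excluded because all modules in Lemma~\ref{lem:spnc} are self-conjugate.
\item The $\mathbb{H}$ case is excluded because these modules are real rather than quaternionic.
\end{itemize}
Hence $\U \otimes \C$ is itself irreducible of dimension at most $4n^2$, and Lemma~\ref{lem:spnc} leaves only $\R$, $\R^{2n}$, $\Sym^2_\oh(\R^{2n};J_{2n})$, and $\Alt^2(\R^{2n};J_{2n})$. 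I will check directly, by the block descriptions of these spaces, that their complexifications are the complex modules listed in Lemma~\ref{lem:spnc}.

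\emph{Step two: decomposition and dimension count.} By Proposition~\ref{prop:min-sum}, any faithful representation into $\U$ splits as a direct sum of irreducible pieces. Each piece has stabilizer $\H_\bullet$, and $\H$ is the intersection of these stabilizers. Hence $\U$ has the displayed form, with the trivial summands discarded since they do not change stabilizers. The numerical constraint on $b,c,d$ comes from
\[
2nb + c\bigl(2n^2-n-1\bigr) + d\bigl(2n^2+n\bigr) \le 4n^2.
\]
Dividing by $2n$ gives the stated inequality, and I would also record why $c,d \le 1$.

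\emph{Step three: identifying the stabilizers.} By \eqref{eq:stabilizer}, the stabilizer of a point $X$ is $\GL_{2n}(\R)_X \cap \Sp_{2n}(\R)$. The task is then to match each summand with the right action.
\begin{itemize}
\item On $\R^{2n \times b}$ the action is left multiplication, so Lemma~\ref{lem:rectangle} gives $\H_1$.
\item For the other two summands, I convert the similarity action on $\Sym^2_\oh(\R^{2n};J_{2n})$ and $\Alt^2(\R^{2n};J_{2n})$ into congruence. The map $X \mapsto J_{2n}X$ sends $\{X : X^\tp J = JX\}$ to skew-symmetric matrices and $\{X : X^\tp J = -JX\}$ to symmetric matrices.
\item Under this map, $AXA^{-1}$ becomes $A^{-\tp}(JX)A^{-1}$, using $A^\tp J A = J$. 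This is congruence by $A^{-\tp}$, which again ranges over $\Sp_{2n}(\R)$.
\end{itemize}
Lemmas~\ref{lem:antisymmetric} and \ref{lem:symmetric} then give $\H_{2,i}$ and $\H_{3,j}$, after intersecting with $\Sp_{2n}(\R)$ and transposing (inverse-transpose preserves the parabolic shape up to relabeling the conjugating orthogonal matrix). The converse direction is Proposition~\ref{prop:min-sum} together with Lemma~\ref{lem:embedding}.

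\emph{Main obstacle.} I expect the hardest part to be the bookkeeping in step three. The congruence identification must be shown to be $\Sp_{2n}(\R)$-equivariant, and the trace-zero constraint has to be tracked. For $\Sym^2_\oh(\R^{2n};J_{2n})$, the trace of $X$ should correspond to a Pfaffian-type quantity, namely $\tr(J^{-1}Y)$ for $Y = JX$. I plan to show this restriction does not alter the possible stabilizer shapes beyond excluding a scalar, so that the full list in Lemma~\ref{lem:antisymmetric} still occurs after intersecting with $\Sp_{2n}(\R)$.
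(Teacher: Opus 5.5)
Your route is the paper's own. The paper justifies this proposition only by citing Fulton--Harris for the realness of all complex irreducibles of the split form, then reusing the argument of Theorems~\ref{thm:slnc-classification}, \ref{thm:sonc-classification} and \ref{thm:spnc-classification} (Lemma~\ref{lem:spnc}, Proposition~\ref{prop:min-sum}, Lemmas~\ref{lem:rectangle}--\ref{lem:symmetric}). Your Step one, which rules out complex and quaternionic type, is more careful than that.

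\textbf{The step that fails.} You say you will ``record why $c,d\le 1$.'' The dimension count gives $d\le 1$, but only $c\le 2$. Since $\dim \Sym^2_\oh(\R^{2n};J_{2n}) = 2n^2-n-1$, the triples $(b,c,d)=(0,2,0)$ and $(1,2,0)$ satisfy $2nb+c(2n^2-n-1)+d(2n^2+n)\le 4n^2$. This is equivalent to the displayed inequality, which is the bound divided by $2$, not by $2n$. These cases genuinely occur. Take nonzero $X_1,X_2\in\Sym^2_\oh(\R^{2n};J_{2n})$ and $\H=\Sp_{2n}(\R)_{X_1}\cap\Sp_{2n}(\R)_{X_2}$. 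The orbit map $g\H\mapsto(gX_1g^{-1},gX_2g^{-1})$ is a faithful representation into $\U=\Sym^2_\oh(\R^{2n};J_{2n})^{\oplus 2}$, which has dimension $4n^2-2n-2\le 4n^2$. By Krull--Schmidt, this $\U$ is not isomorphic to any module in the list with $c\le 1$. So the ``only if'' direction is false as stated, and no argument can produce $c\le 1$; the same slip occurs in Theorem~\ref{thm:spnc-classification}. What your argument does prove is the corrected statement: $c\le 2$, $d\le 1$, and up to two stabilizers $\H_{2,1},\H_{2,2}$. This matches the range $0\le c\le 2$ for $\Alt^2$ in Theorems~\ref{thm:slnc-classification} and \ref{thm:sonc-classification}. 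Likewise, ``discarding'' trivial summands is legitimate only if the statement is read up to trivial summands, as the paper tacitly does.

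\textbf{Two smaller points in Step three.} First, your parenthetical claim that inverse-transpose preserves the parabolic shape is false in $\GL_{2n}(\R)$. The group $\P(\Sp_{2r}(\R;\Omega),\GL_{2n-2r}(\R))^{-\tp}$ is block lower triangular. It stabilizes a $(2n-2r)$-dimensional subspace and acts symplectically on the quotient rather than on the subspace. Your conclusion survives only because $A^{-\tp}=J_{2n}AJ_{2n}^{-1}$ for $A\in\Sp_{2n}(\R)$. This gives $\Sp_{2n}(\R)_X=J_{2n}^{-1}\bigl(\Sp_{2n}(\R)\cap\GL_{2n}(\R)_{J_{2n}X}\bigr)J_{2n}$, and $J_{2n}^{-1}Q$ is still orthogonal. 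It is cleaner to use $X\mapsto XJ_{2n}^{-1}$. From $A^\tp J_{2n}A=J_{2n}$ one gets $A^{-1}J_{2n}^{-1}=J_{2n}^{-1}A^\tp$, so $(AXA^{-1})J_{2n}^{-1}=A(XJ_{2n}^{-1})A^\tp$. Similarity by $A$ thus becomes congruence by $A$ itself, and $\Sp_{2n}(\R)_X=\Sp_{2n}(\R)\cap\GL_{2n}(\R)_{XJ_{2n}^{-1}}$ with no transposing.

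Second, the trace condition needs no Pfaffian-type analysis. Since $I_{2n}\in\Sym^2(\R^{2n};J_{2n})$, and $X$ and $X+tI_{2n}$ have the same similarity stabilizer, every stabilizer arising from $\Sym^2(\R^{2n};J_{2n})$ already arises from its traceless part.
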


\subsection{Faithful linear representations of $\Sp_{2n}$-spaces}\label{sec:spn}

We will use the following fact for modules of $\Sp_{2n} = \Sp_{2n}(\C) \cap \SU_{2n}$, reproduced from \cite[Proposition~26.25]{Fulton_Harris} for easy reference.
\begin{lemma}\label{lem:spn}
    Let $\kappa = (\kappa_1,\dots,\kappa_n) \in \N^{n}$.  The complex irreducible $\Sp_n$-module $\W_\kappa$ can be written as $\W_{\kappa} = \U \otimes \C$ for some real module if and only if $\kappa_i$ is even for all odd $i \in \{1, \dots, n \}$.
\end{lemma}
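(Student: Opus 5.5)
The plan is to reduce the question of whether $\W_\kappa$ is real to a Frobenius--Schur type computation for the compact group $\Sp_{2n}$, following \cite[Section~26.3]{Fulton_Harris}. Every irreducible complex $\Sp_{2n}$-module is self-dual, since $-1$ lies in the Weyl group of type C. So $\W_\kappa$ carries a nonzero invariant bilinear form $\beta_\kappa$, unique up to scalar. By the standard trichotomy (the same Schur's lemma argument used in the proof of Lemma~\ref{lem:sopq}), $\W_\kappa = \U \otimes \C$ for a real module $\U$ exactly when $\beta_\kappa$ is symmetric. Otherwise $\beta_\kappa$ is skew and $\W_\kappa$ is quaternionic. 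The whole proof therefore reduces to deciding the symmetry of $\beta_\kappa$.

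First I would compute $\beta_\kappa$ on the fundamental modules. The fundamental modules $\W_{e_i}$ are the primitive parts of $\Alt^i(\C^{2n})$, on which the symplectic form $J_{2n}$ induces the pairing
\[
\langle v_1\wedge\dots\wedge v_i,\, w_1\wedge\dots\wedge w_i\rangle = \det\bigl(J_{2n}(v_a,w_b)\bigr).
\]
This pairing has symmetry sign $(-1)^i$: it is symmetric for even $i$ and skew for odd $i$. Next, $\W_\kappa$ sits inside $\bigotimes_i \Sym^{\kappa_i}\W_{e_i}$ as the Cartan component, with multiplicity one. Hence $\beta_\kappa$ is the restriction of the tensor product of the induced forms, and its sign is
\[
\prod_i (-1)^{i\kappa_i}=(-1)^{\sum_{i \text{ odd}}\kappa_i}.
\]
Equivalently, this sign is the scalar by which the central element $-I_{2n}$ acts on $\W_\kappa$. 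Uniqueness of $\beta_\kappa$ up to scalar makes the computation legitimate.

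With the sign in hand, the ``if'' direction of the lemma is immediate. If every odd-indexed $\kappa_i$ is even, then $\sum_{i\text{ odd}}\kappa_i$ is even, so $\beta_\kappa$ is symmetric and $\W_\kappa$ is real.

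The main obstacle is the ``only if'' direction as worded. The computation above only yields that $\sum_{i\text{ odd}}\kappa_i$ is even, which is the form of \cite[Proposition~26.25]{Fulton_Harris}. Upgrading this to the statement that each odd-indexed $\kappa_i$ is even individually needs an extra argument. My first attempt would be to test the smallest ambiguous weight $\kappa=e_1+e_3$. By the sign computation, its form is symmetric, so I expect this weight to be a counterexample to the individual-parity reading. If so, I would record the lemma with the parity of $\sum_{i\text{ odd}}\kappa_i$ and check that it does not affect later use. In the application below, Lemma~\ref{lem:spnc} bounds the dimension, so only $\kappa\in\{0,e_1,e_2,2e_1\}$ occur. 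For these weights at most one odd index is nonzero, and the two conditions coincide. This yields that the only real modules of dimension at most $4n^2$ are $\R$, $\su$-type adjoint $\sp_{2n}$, and the real form of $\W_{e_2}$, while $\C^{2n}$ is quaternionic.
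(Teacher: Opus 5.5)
Your argument is the standard Frobenius--Schur computation: invariant forms on the primitive parts $\W_{e_i}\subseteq \Alt^i(\C^{2n})$ with symmetry sign $(-1)^i$, carried over to the Cartan component of $\bigotimes_i \Sym^{\kappa_i}\W_{e_i}$. This is the argument behind the Fulton--Harris result that the paper cites instead of giving a proof. It correctly shows that $\W_\kappa$ is real if and only if $\sum_{i\ \mathrm{odd}}\kappa_i$ is even, i.e.\ if and only if $-I_{2n}$ acts trivially, and so it gives the ``if'' direction of the statement.

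One step should be made explicit. The restricted form on $\W_\kappa$, and on $\W_{e_i}$ inside $\Alt^i(\C^{2n})$, is nonzero because the ambient form is nondegenerate and the summand occurs with multiplicity one, so it can only pair with itself. Uniqueness up to scalar then fixes the symmetry of the restricted form, but uniqueness alone does not show it is nonzero.

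You should not hedge on the ``only if'' direction, because your own computation already settles it. Take any $n\ge 3$ and $\kappa=e_1+e_3$. The form on $\W_{e_1+e_3}$ has sign $(-1)^{1+3}=+1$; equivalently, $-I_{2n}$ acts by $(-1)(-1)=1$ on $\C^{2n}\otimes\W_{e_3}$, hence on its Cartan component. So $\W_{e_1+e_3}$ is the complexification of a real module even though $\kappa_1=1$ is odd. The lemma as worded is therefore false for every $n\ge 3$ and cannot be proved. The correct statement replaces the entrywise condition by the parity of $\kappa_1+\kappa_3+\cdots$; the paper has strengthened the parity-of-the-sum criterion into an entrywise one that does not hold.

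Your check of the downstream use is right: only $\kappa\in\{0,e_1,e_2,2e_1\}$ arise there, and for these the two conditions agree. You are also right to list $\R$ as real. The paper's next sentence says $\C$ has no real form, which contradicts even its own lemma at $\kappa=0$.

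One caution on your final sentence. That $\C^{2n}$ is quaternionic does not remove it from the irreducible real $\Sp_{2n}$-modules of dimension at most $4n^2$. Its realification $\R^{4n}$ is an irreducible real module of dimension $4n$, with complexification $\C^{2n}\oplus\C^{2n}$. It is the module used for the quaternionic Stiefel manifolds in Table~\ref{tab:mat}.
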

So for  $\W \cong \C$ or $\C^{2n}$, there is no real $\Sp_{2n}$-module $\U$ such that $\W = \U \otimes \C$.  However,  we have  
\[
\Sym^2_\oh(\C^{2n};J_{2n}) = \bigl( \Sym_\oh^2(\C^{2n}; J_{2n}) \cap \su_{2n} \bigr) \otimes \C,  \quad
\Alt^2(\C^{2n}; J_{2n}) = \mathfrak{sp}_{2n} \otimes \C,
\]
where
\begin{align*}
\Sym_\oh^2(\C^{2n}; J_{2n}) \cap \su_{2n} &= \biggl\{\begin{bmatrix}
    C &D\\
    -D^* &C^\tp
\end{bmatrix} \in \C^{2n \times 2n} : D \in \Alt^2(\C^{n}),\, C \in \su_n\biggr\}, \\
\mathfrak{sp}_{2n} = \Alt^2(\C^{2n}; J_{2n}) \cap \mathfrak{su}_{2n} &=  \biggl\{\begin{bmatrix}
    C &D\\
    -D^* & -C^\tp
\end{bmatrix} \in \C^{2n \times 2n}: D \in \Sym^2(\C^{n}),\, C \in \mathfrak{u}_n \biggr\},
\end{align*}
which are real vector spaces of dimensions $2n^2-n-1$ and $2n^2+n$  respectively.  This leads us to the following classification.
\begin{proposition}[Classification of faithful representations of $\Sp_{2n}$-spaces]\label{prop:spn-classification}
Let $n \ge 5$. Let $\H \subsetneq \Sp_{2n}$ be a closed subgroup and $\U$ be a real $\Sp_{2n}$-module.  Then $\Sp_{2n} / \H$ admits a faithful representation in $\U$ if and only if there is a pair $(c,d) \in  \{(0,1), (1,0), (1,1)\}$ such that 
\[
\U \cong \bigl(\Sym^2_\oh(\C^n;J_{2n}) \cap \su_{2n}\bigr)^{\oplus c} \oplus \sp_{2n}^{\oplus d},  \qquad  \H = \Sp_{2n} \cap \H_c \cap \H_d,
\]
where 
\begin{align*}
\H_c &= \begin{cases} 
Q \P ( \Sp_{2r}(\C;\Omega), \GL_{2(n-r)}(\C) ) Q^\ast \quad &\text{if~} c = 1, \\
\GL_{2n}(\C) \quad &\text{if~} c = 0,
\end{cases} \\
\H_d &= 
\begin{cases}
U \P\bigl(\O_s(\C;B), \GL_{2n-s}(\C)\bigr) U^\ast \quad &\text{if~} d = 1, \\
\GL_{2n}(\C) \quad &\text{if~} d = 0,
\end{cases}
\end{align*}
for some nonnegative integers $r \le n$,  $s \le 2n$ and matrices $Q, U\in \Un_{2n}$,  $\Omega \in \Alt^2(\R^{2r}) \cap \GL_{2r}(\R)$,  $B \in \S^2(\R^{s}) \cap \GL_{s}(\R)$.
\end{proposition}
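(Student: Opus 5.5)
The plan follows the proofs of Propositions~\ref{prop:sun-classification} and \ref{prop:sopq-classification}: first list the small real irreducible $\Sp_{2n}$-modules, then split $\U$ into irreducibles with Proposition~\ref{prop:min-sum}, and finally compute each stabilizer using Section~\ref{sec:orbit}.

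\textbf{Step 1: the admissible real modules.} I will read the statement, like the other classifications, as implicitly assuming $\dim_\R \U$ is small, on the order of $4n^2$, as in Theorem~\ref{thm:spnc-classification}. Let $\U$ be an irreducible real $\Sp_{2n}$-module. Then $\U\otimes\C$ is either irreducible or a sum $\W\oplus\overline{\W}$. In both cases its complex summands are among $\C$, $\C^{2n}$, $\Sym^2_\oh(\C^{2n};J_{2n})$ and $\Alt^2(\C^{2n};J_{2n})$ by Lemma~\ref{lem:spnc}.
\begin{enumerate}[(a)]
\item By Lemma~\ref{lem:spn}, the two modules $\Sym^2_\oh(\C^{2n};J_{2n})$ and $\Alt^2(\C^{2n};J_{2n})$ are real. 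Their real forms are $\Sym^2_\oh(\C^{2n};J_{2n})\cap\su_{2n}$ and $\sp_{2n}$, as displayed before the statement.
\item $\C$ itself gives only the trivial module $\R$. A trivial summand contributes nothing to stabilizers, so it can be discarded.
\item $\C^{2n}$ is quaternionic, and its realification has dimension $4n$. I will need to treat it explicitly. Since $\Sp_{2n}$ acts transitively on the unit sphere $S^{4n-1}\subset\C^{2n}$, the stabilizers it produces are $\Sp_{2n-2}$-conjugates. That would contradict the stated list, so either the intended size bound excludes this summand or the statement implicitly omits it. I will check which, and most likely record it as a caveat mirroring the remark after Lemma~\ref{lem:spn}.
\end{enumerate}
The dimension bound then allows at most one copy of each of the two real adjoint-type modules. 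This gives exactly the three pairs $(c,d)\in\{(0,1),(1,0),(1,1)\}$; $(0,0)$ is excluded because $\H\subsetneq\Sp_{2n}$.

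\textbf{Step 2: decomposition.} By Proposition~\ref{prop:min-sum}, a Mostow--Palais embedding into $\U=\U_c\oplus\U_d$ is the direct sum of embeddings of $\Sp_{2n}/\H_c'$ and $\Sp_{2n}/\H_d'$, and $\H=\H_c'\cap\H_d'$. By Lemma~\ref{lem:embedding}, each factor is the stabilizer of a single vector $X$. Conversely, any such intersection of stabilizers is realized as the stabilizer of the pair $(X_c,X_d)$.

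\textbf{Step 3: stabilizers.} Take $X$ in $\sp_{2n}$ or in the $\su_{2n}$-form of $\Sym^2_\oh(\C^{2n};J_{2n})$. The action is the restriction of the adjoint action, $X\mapsto AXA^{-1}$ with $A\in\Sp_{2n}\subseteq\SU_{2n}$. Since $A^{-1}=J_{2n}^{-1}A^\tp J_{2n}$, this equals $A(XJ_{2n}^{-1})A^\tp J_{2n}$. So the action becomes congruence on $XJ_{2n}^{-1}$.
\begin{enumerate}[(a)]
\item For $X\in\Sym^2_\oh$, the matrix $XJ_{2n}^{-1}$ is skew-symmetric. Lemma~\ref{lem:antisymmetric} then gives $\GL_{2n}(\C)_{XJ^{-1}}=Q\P(\Sp_{2r}(\C;\Omega),\GL_{2n-2r}(\C))Q^\ast$ with $Q$ unitary.
\item For $X\in\sp_{2n}$, the matrix $XJ_{2n}^{-1}$ is symmetric. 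Lemma~\ref{lem:symmetric} then gives $U\P(\O_s(\C;B),\GL_{2n-s}(\C))U^\ast$.
\end{enumerate}
Intersecting with $\Sp_{2n}$ via \eqref{eq:stabilizer} yields $\H_c$ and $\H_d$. Conversely, every choice of $\Omega$, $B$, $Q$, $U$ must come from some $X$ in the real form. This is where I would check that the relevant Takagi/Youla normal forms can be attained by $\su_{2n}$-type elements.

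\textbf{Main obstacle.} The delicate point is Step 1: justifying that the quaternionic $\C^{2n}$ and the trivial module do not alter the list. The secondary difficulty is realizability in Step 3, i.e., the converse direction. I expect both to be routine once handled explicitly.
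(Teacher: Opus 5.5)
Your outline is the paper's own argument: Lemmas~\ref{lem:spnc} and~\ref{lem:spn} to select the modules, Proposition~\ref{prop:min-sum} to split, and $AXA^{-1}=A(XJ_{2n}^{-1})A^\tp J_{2n}$ to reduce to Lemmas~\ref{lem:antisymmetric} and~\ref{lem:symmetric}. However, the two points you defer as routine are exactly where it breaks, and neither can be closed, because the proposition as written is false in both directions.

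\textbf{Step~1(c).} Because $\C^{2n}$ is quaternionic, its realification is an irreducible real $\Sp_{2n}$-module of dimension $4n$, with complexification $\C^{2n}\oplus\C^{2n}$. Lemma~\ref{lem:spn} only says that $\C^{2n}$ itself is not of the form $\U\otimes\C$. Discarding it on that ground, as the paragraph preceding the proposition does, is a non sequitur. The orbit map $Q\mapsto Qe_1$ is a faithful representation of $\Sp_{2n}/\Sp_{2n-2}\cong S^{4n-1}$ in $\U=\C^{2n}$; the paper itself lists such quaternionic Stiefel representations in Tables~\ref{tab:min} and~\ref{tab:mat}. No dimension bound excludes it, since $4n<2n^2-n-1$. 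Neither this $\U$ nor this $\H$ has the stated form. By your Step~3, every $\Sp_{2n}\cap\H_c\cap\H_d$ is the centralizer in $\Sp_{2n}$ of two matrices. If it contains $\Sp_{2n-2}$, Schur's lemma forces both matrices to be block diagonal for $\operatorname{span}(e_1,e_{n+1})\oplus\operatorname{span}(e_1,e_{n+1})^\perp$ and scalar on the second block. Then the group also contains $\diag(-1,1,\dots,1,-1,1,\dots,1)\in\Sp_{2n}$ (with $-1$ in positions $1$ and $n+1$), which moves $e_1$. Likewise $\C=\R\otimes\C$ is real, so trivial summands can be added to $\U$ without affecting $\H$, contradicting ``$\U\cong\cdots$''.

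\textbf{The converse in Step~3.} This fails too, because the Youla/Takagi normal forms are not attained by $\su_{2n}$-type elements in general. Put $\U_c\coloneqq\Sym^2_\oh(\C^{2n};J_{2n})\cap\su_{2n}$. With arbitrary $Q\in\Un_{2n}$ and $\Omega$, your identity shows that $\Sp_{2n}\cap\H_c$ runs over the centralizers of arbitrary elements of $\Sym^2(\C^{2n};J_{2n})=\C I_{2n}\oplus(\U_c\otimes\C)$. These are common stabilizers of \emph{pairs} of vectors in $\U_c$, as the last clause of Lemma~\ref{lem:comp-real} predicts. Similarly, $\Sp_{2n}\cap\H_d$ runs over centralizers of elements of $\sp_{2n}\otimes\C$. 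By Lemma~\ref{lem:embedding}, a faithful representation in $\U_c$ (resp.\ $\sp_{2n}$) requires $\H$ to be the stabilizer of a single vector. Such a stabilizer contains a maximal torus, since every element is $\Sp_{2n}$-conjugate to a diagonal matrix. But two generic elements of $\U_c$ (for $n\ge 4$), or of $\sp_{2n}$, have common centralizer $\{\pm I_{2n}\}$. So $(c,d)=(1,0)$ with $r=n$, or $(0,1)$ with $s=2n$, and suitable parameters gives $\H=\{\pm I_{2n}\}$, which is not such a stabilizer. For $\U_c$ this is also clear from $\dim\Sp_{2n}/\{\pm I_{2n}\}=2n^2+n>2n^2-n-1=\dim\U_c$.

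\textbf{The multiplicity count.} Your claim of ``at most one copy of each'' does not follow from a $4n^2$ bound, since $\dim\U_c^{\oplus 2}=4n^2-2n-2\le 4n^2$.

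What your plan can actually establish is a corrected classification, not the proposition as stated. That corrected version would allow $\C^{2n}$ and trivial summands, and restrict the parameters $Q,\Omega,U,B$ to those coming from the real forms.
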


\section{Minimal faithful linear representations}\label{sec:min}

In this section, we will determine explicit faithful representations of various common, and some uncommon, $\G$-manifolds. Indeed, a few of these esoteric manifolds, like the indefinite Grassmannian or the last five flag manifolds in Table~\ref{tab:set}, have not appeared before in the literature. We are thus obliged to give a proper definition here, with the common ones included alongside for comparison and completeness. With this in mind, we define them as abstract manifolds in Table~\ref{tab:set}. Their alternative descriptions as homogeneous spaces have already appeared in Table~\ref{tab:homo}; and we will represent them as submanifolds of $\F^{n \times n}$ in Table~\ref{tab:mat}.

\begin{table}[htb]
\footnotesize
\tabulinesep=0.2ex
\begin{tabu}{@{}c|c|c|c|c}
  & manifold & notation & elements & form\\
\hline 
\multirow{12}{*}{\rotatebox[origin=c]{90}{Grassmann}} 
& real  &$\Gr(k,\R^n)$ &$k$-planes & \\
& complex &$\Gr(k,\C^n)$ &$k$-planes & \\
& quaternionic &$\Gr(k,\mathbb{H}^n)$ & right $k$-planes & \\ 
& real symplectic &$\Gr_{\Sp}(2k,\R^{2n})$ & symplectic $2k$-planes &symplectic\\
& complex symplectic &$\Gr_{\Sp}(2k,\C^{2n})$ & symplectic $2k$-planes &symplectic\\
& complex locus &$\Gr_\C(k, \R^n)$ & nondegenerate $k$-planes &symmetric \\
& special Lagrangian &$\operatorname{SLGr}(\R^{2n})$ &special Lagrangian planes &symplectic\\
& complex Lagrangian &$\operatorname{LGr}(\C^{2n})$ &Lagrangian planes &symplectic \\
& skew-Hermitian Lagrangian  &$\operatorname{SLGr}^*(\mathbb{H}^{2n})$ &special Lagrangian planes &skew-Hermitian \\
& orthogonal Lagrangian &$\operatorname{SOGr}(\mathbb{C}^{2n}) $  &special Lagrangian planes & symmetric \\
& isotropic &$\operatorname{IGr}(k,\C^n)$  & isotropic $k$-planes &symmetric \\
& indefinite &$\Gr(p,q,\R^m \oplus \R^n)$ &$(p,q)$-planes &$(m,n)$-indefinite\\
\hline 
\multirow{8}{*}{\rotatebox[origin=c]{90}{flag}}
& real &$\Fl(k_1,\dots,k_m,\R^n)$ &$(k_1,\dots,k_m)$-flags & \\
& complex &$\Fl(k_1,\dots,k_m,\C^n)$ &$(k_1,\dots,k_m)$-flags & \\
& quaternionic &$\Fl(k_1,\dots,k_m,\mathbb{H}^n)$ &$(k_1,\dots,k_m)$-flags & \\
& isotropic &$\operatorname{IFl}(k_1,\dots,k_m,\R^{2n})$ & symplectic $(2k_1,\dots,2k_m)$-flags & symplectic\\
& partial isotropic&$\operatorname{IFl}(k_1,\dots,k_m,\R^{2n+p})$ & symplectic $(2k_1,\dots,2k_m)$-flags & symplectic\\
& real symplectic &$\Fl_{\Sp}(2k_1,\dots,2k_m,\R^{2n})$ & symplectic $(2k_1,\dots,2k_m)$-flags & symplectic \\
& complex symplectic &$\Fl_{\Sp}(2k_1,\dots,2k_m,\C^{2n})$  & symplectic $(2k_1,\dots,2k_m)$-flags & symplectic\\
& complex Lagrangian &$\operatorname{LFl}(k_1,\dots,k_m,\C^{2n})$ &  isotropic $(k_1,\dots,k_m)$-flags & isotropic\\
\hline 
\multirow{5}{*}{\rotatebox[origin=c]{90}{Stiefel}}
& noncompact real & $\St_k(\R^n)$ & $k$-frames &\\
& noncompact complex & $\St_k(\C^n)$ & $k$-frames &\\
& compact real & $\Stiefel_k(\R^n)$ &orthonormal $k$-frames & symmetric definite\\
& compact complex & $\Stiefel_k(\C^n)$ &orthonormal $k$-frames & Hermitian definite \\
& compact quaternionic & $\Stiefel_k(\mathbb{H}^n)$ &orthonormal $k$-frames & Hermitian definite
\end{tabu}
\caption{Grassmann, flag, Stiefel manifolds defined set-theoretically. Some definitions require a nondegenerate bilinear form on the ambient space.} 
\label{tab:set}
\end{table}
Aside from the few aforementioned exceptions, these $\G$-manifolds have all appeared in the literature in various contexts, although some are rather obscure. We now proceed to find their minimal faithful representations, as defined in Definition~\ref{def:faith}.

We define the \emph{Mostow--Palais dimension} of a $\G$-manifold $\M$ as:
\[
d_\MP(\M) \coloneqq \min \{ \dim \V : \text{there exists a Mostow--Palais embedding }\varepsilon : \M \to \V \}.
\]
Note that there is no requirement for the $\G$-module $\V$ be a subspace of $\F^{n \times n}$ or for $\G$ to act via matrix multiplication like in \eqref{eq:act}. In principle, $d_\MP(\M)$ could be smaller than the dimension of the minimal faithful representation of $\M$. But we will show that they are equal for all $\M$ that is an irreducibly faithfully representable homogeneous space. 
\begin{theorem}[Characterization of minimal faithful representations]\label{thm:min}
Let $k \le n$ be positive integers and $\G$ be as in \eqref{eq:G}.
\begin{enumerate}[\upshape (a)]
\item\label{it:f1} A homogeneous space $\G/\H$ has a faithful representation if and only if $\H$ is the form in Theorem~\ref{thm:slnc-classification}, Proposition~\ref{prop:slnr-classification}, Proposition~\ref{prop:sun-classification}, Theorem~\ref{thm:sonc-classification}, Proposition~\ref{prop:sopq-classification}, Corollary~\ref{cor:sonr-classification}, Theorem~\ref{thm:spnc-classification}, Proposition~\ref{prop:spnr-classification}, or Proposition~\ref{prop:spn-classification}.

\item\label{it:f2} Let $\G/\H$ be faithfully representable. A representation $\varepsilon : \G/\H \to \V$ is a minimal faithful representation if and only if $\varepsilon = \varepsilon_1 \oplus \dots \oplus  \varepsilon_r: \G/\H \to \V \cong  \V_1 \oplus \dots \oplus \V_r$, where $\varepsilon_i:\G/\H \to \V_i$ is a representation in Table~\ref{tab:min}, $i=1,\dots, r$.
Furthermore,
\[
\dim(\V) = d_\MP(\G/\H).
\]
\end{enumerate}
\begin{table}[htb]
\footnotesize
\tabulinesep=0.2ex
\begin{tabu}{@{}c|c|c|c|c|c}
$\G$ & $n$ & $ \varepsilon_i(\G/\H)$ &  $\V_i$ & $\dim(\V_i)$ & $X$ \\
\hline 
$\SL_n(\F)$ & $n \ge 9$ & \makecell[c]{%
  $\{AX : A \in \SL_n(\F)\}$\\
  $\{AXA^\tp : A \in \SL_n(\F)\}$\\
  $\{AXA^\tp : A \in \SL_n(\F)\}$\\
  $\{AXA^{-1} : A \in \SL_n(\F)\}$%
} 
& \makecell[c]{%
  $\F^{n \times k}$\\
  $\Alt^2(\F^n)$\\
  $\Sym^2(\F^n)$\\
  $\sl_n(\F)$%
}
& \makecell[c]{%
  $nr$\\
  $\frac{1}{2}n(n-1)$\\
  $\frac{1}{2}n(n+1)$\\
  $n^2-1$%
}
& \makecell[c]{%
  full rank\\
  any\\
  any\\
  any%
}\\
\hline
$\SU_n$ & $n \ge 9$ & \makecell[c]{%
  $\{QX : Q \in \SU_n\}$\\
  $\{QXQ^\tp : Q \in \SU_n\}$\\
  $\{QXQ^\tp : Q \in \SU_n\}$\\
  $\{QXQ^\ast : Q \in \SU_n\}$\\
  $\{QXQ^\ast : Q \in \SU_n\}$%
} 
& \makecell[c]{%
  $\C^{n \times k}$\\
  $\Alt^2(\C^n)$\\
  $\Sym^2(\C^n)$\\
  $\sl_n(\C)$\\
  $\mathfrak{su}_n$%
}
& \makecell[c]{%
  $nr$\\
  $\frac{1}{2}n(n-1)$\\
  $\frac{1}{2}n(n+1)$\\
  $n^2-1$\\
  $n^2-1$%
}
& \makecell[c]{%
  full rank\\
  any\\
  any\\
  any\\
  any%
}\\
\hline
$\SO_n(\F)$ & $n \ge 19$ & \makecell[c]{%
  $\{QX : Q \in \SO_n(\F)\}$\\
  $\{QXQ^\tp : Q \in \SO_n(\F)\}$\\
  $\{QXQ^\tp : Q \in \SO_n(\F)\}$%
} 
& \makecell[c]{%
  $\F^{n \times k}$\\
  $\Alt^2(\F^n)$\\
  $\Sym^2_\oh(\F^n)$%
}
& \makecell[c]{%
  $nr$\\
  $\frac{1}{2}n(n-1)$\\
  $\frac{1}{2}(n+2)(n-1)$%
}
& \makecell[c]{%
  full rank\\
  any\\
  any%
}\\
\hline
$\SO_{p,n-p}$ & $n \ge 19$ & \makecell[c]{%
  $\{QX : Q \in \SO_{p,n-p}\}$\\
  $\{QXQ^{-1} : Q \in \SO_{p,n-p}\}$\\
  $\{QXQ^{-1} : Q \in \SO_{p,n-p}\}$%
} 
& \makecell[c]{%
  $\R^{n \times k}$\\
  $\Alt^2(\R^n;I_{p,n-p})$\\
  $\Sym^2_\oh(\R^n;I_{p,n-p})$%
}
& \makecell[c]{%
  $nr$\\
  $\frac{1}{2}n(n-1)$\\
  $\frac{1}{2}(n+2)(n-1)$%
}
& \makecell[c]{%
  full rank\\
  any\\
  any%
}\\
\hline
$\Sp_{2n}(\F)$ & $n \ge 5$ & \makecell[c]{%
  $\{QX : Q \in \Sp_{2n}(\F)\}$\\
  $\{QXQ^{-1} : Q \in \Sp_{2n}(\F)\}$\\
  $\{QXQ^{-1} : Q \in \Sp_{2n}(\F)\}$%
} 
& \makecell[c]{%
  $\F^{2n \times k}$\\
  $\Alt^2(\F^{2n}; J_{2n})$\\
  $\Sym^2_\oh(\F^{2n};J_{2n})$%
}
& \makecell[c]{%
  $2nr$\\
  $2n^2+n$\\
  $(n-1)(2n+1)$%
}
& \makecell[c]{%
  full rank\\
  any\\
  any%
}\\
\hline
$\Sp_{2n}$ & $n \ge 5$ & \makecell[c]{%
  $\{QX : Q \in \Sp_{2n}\}$\\
  $\{QXQ^\ast : Q \in \Sp_{2n}\}$\\
  $\{QXQ^\ast : Q \in \Sp_{2n}\}$%
} 
& \makecell[c]{%
  $\C^{2n \times k}$\\
  $\mathfrak{sp}_{2n}$\\
  $\Sym^2_\oh(\C^{2n};J_{2n}) \cap \mathfrak{su}_{2n}$%
}
& \makecell[c]{%
  $2nr$\\
  $2n^2+n$\\
  $(n-1)(2n+1)$%
}
& \makecell[c]{%
  full rank\\
  any\\
  any%
}
\end{tabu}
\caption{Irreducible faithful representations of $\G/\H$.} 
\label{tab:min}
\end{table}
\end{theorem}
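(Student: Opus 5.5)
Part \ref{it:f1} is essentially a compilation of the earlier classification results, so the plan is to reduce it to them. Let $\varepsilon:\G/\H\to\V$ be a faithful representation. By Definition~\ref{def:faith} it is a Mostow--Palais embedding. By Lemma~\ref{lem:embedding}, $\H=\G_v$ with $v=\varepsilon(\H)$. Proposition~\ref{prop:min-sum} then splits $\V$ into irreducible summands $\V_i$ and $\H$ into the intersection of the stabilizers $\G_{v_i}$. Since $\V\subseteq\F^{n\times n}$ with $\G$ acting through the matrix actions in \eqref{eq:act}, each irreducible summand is one of the matrix modules with $\dim\le n^2$ (or $4n^2$ for type C). These are exactly the modules listed in Lemmas~\ref{lem:slnc}, \ref{lem:sonc}, \ref{lem:spnc}, Corollary~\ref{cor:slnr}, and Lemmas~\ref{lem:su-real}, \ref{lem:sopq}, \ref{lem:spn}. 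So $\H$ falls under the corresponding classification theorem or proposition for $\G$. Conversely, if $\H$ has one of the listed forms, the ``if'' direction of that classification result supplies the faithful representation. For the real forms I will also invoke Lemma~\ref{lem:comp-real} to move between complex and real modules.

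For part \ref{it:f2}, first I check that each row of Table~\ref{tab:min} is a faithful representation of $\G/\G_X$. For $X$ of full rank, or any $X$, the stabilizer is computed by Lemmas~\ref{lem:rectangle}--\ref{lem:JCF} intersected with $\G$ via \eqref{eq:stabilizer}. Lemma~\ref{lem:embedding} then turns $g\G_X\mapsto g\cdot X$ into an embedding. By Lemma~\ref{lem:min-sum}, direct sums of such rows are faithful representations of $\G/\bigcap_i\H_i$.

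Minimality is the substantive part, and I will prove it through the inequality $\dim\V\le d_\MP(\G/\H)$. Take any Mostow--Palais embedding $\varepsilon':\G/\H\to\V'$ of minimal dimension $d_\MP$. Since $\G/\H$ already embeds in some $\V$ of dimension at most $n^2$ (resp.\ $4n^2$), we may assume $\dim\V'$ is below that bound. Then every irreducible summand of $\V'$ has dimension at most $n^2$. By the low-dimension lemmas each such summand is isomorphic to one of the matrix modules in Table~\ref{tab:min}, or to a trivial module. Trivial summands contribute nothing to stabilizers and can be discarded by minimality. Hence $\V'$ is itself isomorphic to a matrix $\G$-module, and $\varepsilon'$ is equivalent to a faithful representation. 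This gives $d_\MP\ge$ the minimal faithful dimension, and the reverse inequality is trivial, so $\dim\V=d_\MP(\G/\H)$.

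For the ``only if'' direction, write a minimal faithful $\varepsilon$ as a sum of irreducibles using Proposition~\ref{prop:min-sum}. In a $\F^{n\times k}$ summand, the stabilizer of $X$ depends only on its column span (Lemma~\ref{lem:rectangle}). So a non-full-rank $X$ can be replaced by a full-rank $X'$ in $\F^{n\times r}$ with $r=\rank X<k$ without changing $\H$. This contradicts minimality, which forces the ``full rank'' condition and the dimension $nr$. Every other summand already matches a row of the table.

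The main obstacle is the claim that every irreducible $\G$-module of dimension at most the bound is isomorphic to a matrix module in Table~\ref{tab:min}. This is needed for the equality $\dim\V=d_\MP$, and it also requires checking that abstract isomorphism preserves the stabilizer structure. Equivalence in Definition~\ref{def:equiv} transports stabilizers exactly, so this should go through. A second point to watch is the real forms with complex modules, such as $\SU_n$ acting on $\C^{n\times k}$. There I will use Lemma~\ref{lem:comp-real} to reduce to the complex classification before applying the dimension count.
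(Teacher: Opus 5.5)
Your route is the paper's own: (a) by compiling the classification results, and $\dim\V=d_\MP(\G/\H)$ from the low-dimension lemmas, which leave no smaller abstract $\G$-module able to host a Mostow--Palais embedding. Discarding trivial summands, and replacing a rank-deficient $X\in\F^{n\times k}$ by a full-rank $X'\in\F^{n\times r}$ with the same stabilizer, correctly settle the ``only if'' half of (b).

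The gap is the ``if'' half. Your $d_\MP$ argument shows that $d_\MP(\G/\H)$ equals the \emph{minimum} dimension of a faithful representation. But the announced inequality $\dim\V\le d_\MP(\G/\H)$ is never proved for a \emph{given} $\V=\V_1\oplus\dots\oplus\V_r$ built from rows of Table~\ref{tab:min}. The closing ``so $\dim\V=d_\MP(\G/\H)$'' presupposes that this $\V$ is already minimal. No argument can supply this, because that half is false.

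For $\G=\SO_n(\R)$, take the rows $\varepsilon_1$ with image $\{Qe_1\}\subseteq\R^{n\times 1}$ and $\varepsilon_2$ with image $\{Q(e_1e_2^\tp-e_2e_1^\tp)Q^\tp\}\subseteq\Alt^2(\R^n)$. Their stabilizers $1\times\SO_{n-1}(\R)$ and $\SO_2(\R)\times\SO_{n-2}(\R)$ meet in $I_2\times\SO_{n-2}(\R)$. So $\varepsilon_1\oplus\varepsilon_2$ is a faithful representation of $\SO_n(\R)/\SO_{n-2}(\R)$ of dimension $n+\frac12 n(n-1)$, in which neither summand alone is faithful. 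Yet the single row with image $\{Q[e_1\ e_2]\}\subseteq\R^{n\times 2}$ has the same stabilizer and dimension $2n$. (Even $\varepsilon\oplus\varepsilon$ is faithful whenever $\varepsilon$ is.) What is provable, and what your argument gives, is this: every minimal faithful representation is such a sum with full-rank $X$, and for it $\dim\V=d_\MP(\G/\H)$. The paper's ``by tracking the dimensions'' skips the same point.

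The real-form issue you flag also needs more than an application of Lemma~\ref{lem:comp-real}. Lemmas~\ref{lem:su-real} and \ref{lem:spn} only detect real irreducibles of real type. Hence Propositions~\ref{prop:sun-classification} and \ref{prop:spn-classification}, which your proof of (a) simply compiles, omit the real irreducibles of complex or quaternionic type. These are the realifications of $\C^n$, $\Alt^2(\C^n)$, $\Sym^2(\C^n)$ for $\SU_n$ and of $\C^{2n}\cong\mathbb{H}^n$ for $\Sp_{2n}$, which are exactly the modules in those rows of Table~\ref{tab:min}. For instance, $\SU_n/\SU_{n-k}\cong\Stiefel_k(\C^n)$ is faithfully representable in $\C^{n\times k}$, but $\SU_{n-k}$ is not of the form in Proposition~\ref{prop:sun-classification}. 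So compiling as you propose would make (a) false.

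You need to redo these two classifications, treating each complex- or quaternionic-type summand as a complex $\G_\R$-module and applying Lemma~\ref{lem:comp-real} with Lemma~\ref{lem:slnc} or \ref{lem:spnc}. Moreover, minimality is measured in real dimension. The ambient bound in your $d_\MP$ step is therefore $\dim_\R\C^{n\times n}=2n^2$ (resp.\ $\dim_\R\C^{2n\times 2n}=8n^2$). Real-type summands must be excluded up to that dimension, which is beyond the range in which those lemmas are stated.
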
 
\begin{proof}
The results listed in \ref{it:f1} characterize all possible forms of $\H$ for $\G/\H$ to admit a faithful representation. In the proofs of these results, we computed all possible forms of faithful representations using Lemmas~\ref{lem:rectangle}--\ref{lem:JCF}. By  tracking the dimensions of $\G$-modules for each faithful representation, a representation is a minimal faithful one if and only if it takes the forms in Table~\ref{tab:min} with the required $X \in \V$.

Let $\varepsilon: \G/\H \to \V$ be a minimal faithful representation in Table~\ref{tab:min}. By Lemma~\ref{lem:slnc}, Corollary~\ref{cor:slnr}, Lemma~\ref{lem:su-real}, Lemma~\ref{lem:sonc}, Lemma~\ref{lem:sopq}, Lemma~\ref{lem:spnc}, Lemma~\ref{lem:spn}, the $\G$-module $\V$ has the lowest dimension among all $\G$-modules that admit a Mostow--Palais embedding of $\G/\H$, regardless of the a priori group actions. Consequently, $\dim(\V) = d_\MP(\G/\H)$.
\end{proof}

As a consequence of Theorem~\ref{thm:min}, we obtain minimal faithful representations of the $\G$-manifolds in Table~\ref{tab:set}, or, equivalently, the homogeneous spaces in Table~\ref{tab:homo}.

\begin{theorem}[Minimal faithful representations of common $\G$-manifolds]\label{thm:mat}
Let $k \le n$ and $k_0 \coloneqq 0 < k_1  <  \dots <  k_m < k_{m+1} \coloneqq n$. Let  $\lambda, \mu, \lambda', \mu', \lambda_1,\dots,\lambda_{m+1}, \mu_1,\dots,\mu_{m+1} \in \R$ satisfy
\begin{gather*}
\lambda \neq \mu, \quad \lambda' \neq \mu', \quad \lambda_i \neq \lambda_j \text{ for all }i\neq j,\\
k\lambda + (n-k)\mu=0, \quad (p+q)\lambda' + (m+n-p-q)\mu' = 0, \quad n_1\lambda_1 + \dots n_{m+1}\lambda_{m+1}=0.
\end{gather*}
Let $\Lambda \coloneqq \diag (\lambda_1 I_{n_1},  \dots,\lambda_{m+1} I_{n_{m+1}})$ and $\Xi \coloneqq \diag(\mu_1 J_{2n_1}, \dots, \mu_{m+1} J_{2n_{m+1}})$. Then the representations given in Table~\ref{tab:mat} are minimal faithful.
\begin{table}[htb]
\footnotesize
\tabulinesep=0.2ex
\begin{tabu}{@{}c|c|c}
$\M$ & $\varepsilon(\M)$ & $\V$ \\
\hline
$\Gr(k,\R^n)$ &$\{Q \diag(\lambda I_{k},  \mu I_{n-k}) Q^\tp : Q \in \SO_n(\R)\}$ &$\Sym^2_\oh(\R^n)$\\
$\Gr(k,\C^n)$ &$\{iQ \diag (\lambda I_{k},  \mu I_{n-k}) Q^* : Q \in \SU_n\}$ &$\su_n$\\
$\Gr(k,\mathbb{H}^n)$ &$\{iQ \diag (\lambda I_k, \mu I_{n-k}, \lambda I_k, \mu I_{n-k} ) Q^{\ast} : Q \in \Sp_{2n}\}$ &$\Sym^2_\oh(\C^{2n};J_{2n}) \cap \su_{2n}$ \\
$\Gr_{\Sp}(2k,\R^{2n})$ &$\{Q \diag(\lambda I_k, \mu I_{n-k}, \lambda I_k, \mu I_{n-k}) Q^{-1} : Q \in \Sp_{2n}(\R)\}$ &$\Sym^2_\oh(\R^{2n};J_{2n})$ \\
$\Gr_{\Sp}(2k,\C^{2n})$ &$\{Q \diag(\lambda I_k, \mu I_{n-k}, \lambda I_k, \mu I_{n-k}) Q^{-1} : Q \in \Sp_{2n}(\C)\}$ &$\Sym^2_\oh(\C^{2n}; J_{2n})$ \\
$\Gr_\C(k, \R^n)$ &$\{Q \diag(\lambda I_{k},  \mu I_{n-k}) Q^\tp : V \in \SO_n(\C)\}$ &$\Sym^2_\oh(\C^n)$\\
$\operatorname{SLGr}(\R^{2n})$ &$\{QQ^\tp : Q \in \mathsf{SU}_n\}$ &$\Sym^2(\C^n)$\\
$\operatorname{LGr}(\C^{2n})$ &$\{iQ\diag(I_n, -I_n)Q^* : Q \in \Sp_{2n}\}$ &$\Alt^2(\C^{2n}; J_{2n}) \cap \su_{2n}$\\
$\operatorname{SLGr^*(\mathbb{H}^{2n})}$ &$\{Q J_{2n} Q^\tp : Q \in \SU_{2n}\}$ &$\Alt^2(\C^{2n})$\\
$\operatorname{SOGr(\C^{2n})}$ &$\{Q J_{2n}Q^\tp : Q \in \SO_{2n}(\R)\}$ &$\Alt^2(\R^{2n})$\\
$\operatorname{IGr}(k,n)$ &$\{Q \diag(\lambda J_{2k},  0_{n - 2k }) Q^\tp : Q \in \SO_{n}(\R)\}$ &$\Alt^2(\R^{n})$\\
$\Gr(p,q,\R^{m+n})$ &$\{Q \diag(\lambda' I_p, \mu' I_{m-p},\lambda' I_q, \mu' I_{n-q})Q^{-1}: Q \in \SO_{m,n}\}$ &$\Sym^2_\oh(\R^{m+n};I_{m,n})$\\
\hline 
$\Fl(k_1,\dots,k_m,\R^n)$ &$\{Q \Lambda Q^\tp : Q \in \SO_n(\R)\}$ &$\Sym^2_\oh(\R^n)$ \\
$\Fl(k_1,\dots,k_m,\C^n)$ &$\{iQ \Lambda Q^* : Q \in \SU_n\}$ &$\su_n$ \\
$\Fl(k_1,\dots,k_m,\mathbb{H}^n)$ &$\{ iQ \diag(\Lambda, \Lambda) Q^* : Q \in \Sp_{2n}\}$ &$\Sym^2_\oh(\C^{2n}; J_{2n}) \cap \su_{2n}$\\
$\operatorname{IFl}(k_1,\dots,k_m,\R^{2n})$ &$\{Q \Xi Q^\tp : Q \in \SO_{2n}(\R)\}$ &$\Alt^2(\R^{2n})$ \\
$\operatorname{IFl}(k_1,\dots,k_m,\R^{2n+p})$ &$\{Q \diag(\Xi, 0_p) Q^\tp : Q \in \SO_{2n+p}(\R)\}$ &$\Alt^2(\R^{2n+p})$\\
$\Fl_{\Sp}(2k_1,\dots,2k_m,\mathbb{F}^{2n})$ &$\{Q \diag(\Lambda, \Lambda)Q^{-1} : Q \in \Sp_{2n}(\mathbb{F})\}$ &$\Sym^2_\oh(\mathbb{F}^{2n}; J_{2n})$\\
$\operatorname{LFl}(k_1,\dots,k_m,\mathbb{C}^{2n})$ &$\{ iQ \diag(\Lambda,-\Lambda) Q^\ast : Q \in \Sp_{2n} \}$ &$\mathfrak{sp}_{2n}$\\
\hline
$\St_k(\R^n)$ &$\bigl\{A\begin{bsmallmatrix} I_k\\ 0\end{bsmallmatrix}: A \in \SL_n(\R) \bigr\}$ &$\R^{n \times k}$ \\
$\St_k(\C^n)$ &$\bigl\{A\begin{bsmallmatrix} I_k\\ 0\end{bsmallmatrix}: A \in \SL_n(\C) \bigr\}$ &$\C^{n \times k}$ \\
$\Stiefel_k(\R^n)$ &$\bigl\{Q\begin{bsmallmatrix} I_k\\ 0\end{bsmallmatrix}: Q \in \SO_n(\R) \bigr\}$ &$\R^{n \times k}$\\
$\Stiefel_k(\C^n)$ &$\bigl\{Q\begin{bsmallmatrix} I_k\\ 0\end{bsmallmatrix}: Q \in \SU_n \bigr\}$ &$\C^{n \times k}$ \\
$\Stiefel_k(\mathbb{H}^n)$ &$\bigl\{Q\begin{bsmallmatrix} I_{2k}\\ 0\end{bsmallmatrix}: Q \in \Sp_n \bigr\}$ &$\C^{2n \times 2k}$ 
\end{tabu}
\caption{Minimal faithful representations for Tables~\ref{tab:homo} and \ref{tab:set}.} 
\label{tab:mat}
\end{table}
\end{theorem}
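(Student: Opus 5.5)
The plan is to reduce everything to Theorem~\ref{thm:min}\ref{it:f2}. For each row of Table~\ref{tab:mat}, write $X_0$ for the displayed base matrix. First, check that $X_0$ lies in the listed module $\V$, which is a direct verification. Examples: $\diag(\lambda I_k,\mu I_{n-k})$ is traceless by $k\lambda+(n-k)\mu=0$, and $J_{2n}$ is skew-symmetric. Second, compute the stabilizer $\G_{X_0}$ using \eqref{eq:stabilizer} together with Lemmas~\ref{lem:rectangle}--\ref{lem:JCF}, and identify it with the subgroup $\H$ in the corresponding row of Table~\ref{tab:homo}. By Lemma~\ref{lem:embedding}, the orbit map $g\H\mapsto g\cdot X_0$ is then a Mostow--Palais embedding of $\G/\H\cong\M$ into a $\G$-module sitting inside $\F^{n\times n}$ with a matrix-multiplication action. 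Hence it is a faithful representation.

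The stabilizer computations fall into three families.

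\begin{enumerate}[\upshape (i)]
\item \emph{Stiefel rows.} These are left multiplication on a full-rank $X_0=\bigl[\begin{smallmatrix} I_k\\0\end{smallmatrix}\bigr]$. Lemma~\ref{lem:rectangle} gives $\P(I_k,\GL_{n-k})\cap\G$. This equals $\P(I_k,\SL_{n-k})$ for $\SL_n$, and $I_k\times\SO_{n-k}$, $I_k\times\SU_{n-k}$, or $I_{2k}\times\Sp_{2n-2k}$ for the compact groups.
\item \emph{Grassmann and flag rows.} These use congruence or similarity with a diagonal $\Lambda$ having distinct eigenvalues on blocks of sizes $n_i$. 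Lemma~\ref{lem:symmetric} (real or complex), Lemma~\ref{lem:JCF}, or the spectral argument of Proposition~\ref{prop:sun-classification} show that the stabilizer in $\GL_n$ is block diagonal with blocks preserving each eigenspace. Intersecting with $\G$ then yields $\S(\O_{n_1}\times\dots\times\O_{n_{m+1}})$, $\S(\Un_{n_1}\times\dots)$, $\Sp_{2n_1}\times\dots$, and so on.
\item \emph{Rows with an alternating $X_0$.} This covers $J_{2n}$, $\Xi$, and $\diag(\lambda J_{2k},0)$. Lemma~\ref{lem:antisymmetric} gives $\P(\Sp,\GL)$, and intersecting with $\SO$ or $\SU$ gives $\Sp\cap\SO=\Un$, $\Sp\cap\SU=\Sp$, or $\Un_k\times\SO_{n-2k}$. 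The row $QQ^\tp$ in $\Sym^2(\C^n)$ is analogous, with stabilizer $\SU_n\cap\O_n(\C)=\SO_n(\R)$.
\end{enumerate}

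The quaternionic, symplectic and indefinite cases need one extra step. We must check that $\diag(\lambda I_k,\mu I_{n-k},\lambda I_k,\mu I_{n-k})$ commuting with $Q\in\Sp_{2n}(\F)$ or $\Sp_{2n}$ forces $Q$ to preserve the symplectic eigenspaces. Since each eigenspace is symplectic, the stabilizer is $\Sp_{2k}\times\Sp_{2n-2k}$. For $\SO_{m,n}$ the eigenspaces of $\diag(\lambda'I_p,\mu'I_{m-p},\lambda'I_q,\mu'I_{n-q})$ are nondegenerate of signatures $(p,q)$ and $(m-p,n-q)$, which gives $\S(\O_{p,q}\times\O_{m-p,n-q})$.

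With faithfulness established, minimality follows from Theorem~\ref{thm:min}\ref{it:f2}. Each $\V$ listed is one of the irreducible modules of Table~\ref{tab:min} with an admissible $X$ ("any" for the congruence and similarity modules, "full rank" for $\F^{n\times k}$). Hence each $\varepsilon$ is a single summand $\varepsilon_1$ of the form required there, so it is minimal and $\dim\V=d_\MP(\M)$.

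The step I expect to be the main obstacle is matching the computed stabilizers exactly with the subgroups of Table~\ref{tab:homo}, in particular the determinant-one conditions $\S(\cdots)$ and the identification of the intersections with compact forms. The hardest instances are $\Sp_{2n}(\C)\cap\SU_{2n}$ against the $J$-block structure, and $\Gr(k,\mathbb{H}^n)$ and $\operatorname{LFl}$, where the eigenspace decomposition must be shown to be compatible with $J_{2n}$. There I would conjugate by the permutation that pairs coordinate $i$ with coordinate $n+i$, reducing to blockwise statements $\Sp_{2n_i}$ and $\Un_{n_i}$.
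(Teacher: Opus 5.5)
Your strategy is the paper's: pick the base matrix, identify its stabilizer with the $\H$ of Table~\ref{tab:homo}, and cite Theorem~\ref{thm:min}\ref{it:f2} for minimality. You get faithfulness directly from Lemma~\ref{lem:embedding}, whereas the paper derives it from Theorem~\ref{thm:min}\ref{it:f1}. Your route is if anything sturdier, because Propositions~\ref{prop:sun-classification} and~\ref{prop:spn-classification} never list $\C^{n\times k}$, $\Sym^2(\C^n)$, $\Alt^2(\C^{2n})$ or $\C^{2n\times k}$ as admissible modules.

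The substance of the proof is the row-by-row stabilizer check, though, and several identifications you assert are false under the stated hypotheses. The clearest case is the quaternionic Stiefel row, reading $\Sp_n$ there as $\Sp_{2n}=\Sp_{2n}(\C;J_{2n})\cap\SU_{2n}$:
\begin{itemize}
\item Every $Q\in\Sp_{2n}$ satisfies $QJ_{2n}\bar v=J_{2n}\overline{Qv}$. So fixing $e_1,\dots,e_{2k}$ forces fixing $J_{2n}e_i=-e_{n+i}$ as well.
\item Hence for $2k\le n$ the stabilizer of $\bigl[\begin{smallmatrix} I_{2k}\\ 0\end{smallmatrix}\bigr]$ is $\cong\Sp_{2n-4k}$, and the orbit is $\Stiefel_{2k}(\mathbb{H}^n)$. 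It is not $\{I_{2k}\}\times\Sp_{2n-2k}$, which is not even contained in $\Sp_{2n}$.
\item Independently of conventions, this row cannot be minimal. $\Stiefel_k(\mathbb{H}^n)$ is the orbit of $[e_1,\dots,e_k]\in\C^{2n\times k}$, with stabilizer $\Sp_{2n-2k}$ acting on the coordinates $k+1,\dots,n,n+k+1,\dots,2n$. That module has real dimension $4nk$, the value in Table~\ref{tab:mp}, whereas $\dim_\R\C^{2n\times 2k}=8nk$.
\end{itemize}

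Two further failures come from missing genericity conditions.
\begin{itemize}
\item \emph{Rows built from $\Xi$.} The statement imposes nothing on $\mu_1,\dots,\mu_{m+1}$. Yet the stabilizer of $\Xi$ in $\SO_{2n}(\R)$ equals $\Un_{n_1}\times\dots\times\Un_{n_{m+1}}$ only when $|\mu_1|,\dots,|\mu_{m+1}|$ are pairwise distinct and nonzero. If $|\mu_i|=|\mu_j|$, two blocks merge into a $\Un_{n_i+n_j}$; if $\mu_j=0$, an orthogonal factor appears. So ``$\Sp\cap\SO=\Un$'' does not settle these rows.
\item \emph{The $\operatorname{LFl}$ row.} Your blockwise reduction to $\Un_{n_i}$ needs $\lambda_i\neq-\lambda_j$ and $\lambda_i\neq 0$. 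Otherwise eigenspaces of $\diag(\Lambda,-\Lambda)$ merge into a larger isotropic subspace (factor $\Un_{n_i+n_j}$) or a symplectic one (factor $\Sp_{2n_i}$). The hypotheses do not exclude this. For $m=1$ and $n_1=n_2$, the condition $n_1\lambda_1+n_2\lambda_2=0$ forces $\lambda_2=-\lambda_1$, and the stabilizer is $\cong\Un_n$.
\end{itemize}
A smaller slip in your item (ii): under congruence, the stabilizer in $\GL_n$ is the orthogonal group of the form, not a block-diagonal group. Block-diagonality appears only after intersecting with a group on which congruence coincides with similarity.

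The paper's one-line proof passes over exactly these points, so they are defects of the statement rather than of your method. A proof that actually computes the stabilizers must amend those rows rather than assert them. That means the corrected base point and module for $\Stiefel_k(\mathbb{H}^n)$, and genericity conditions on the $\mu_j$ and $\lambda_j$.
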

\begin{proof}
Using the homogeneous space characterizations of $\M$ in Table~\ref{tab:homo}, it follows from Theorem~\ref{thm:min}\ref{it:f1} that they have faithful representations.  The required minimality follows from Theorem~\ref{thm:min}\ref{it:f2}.  For concreteness, take $\Gr(k,\mathbb{H}^n)$ for illustration. From Table~\ref{tab:homo},  we have $\Gr(k,\mathbb{H}^n) \cong \Sp_{2n}/(\Sp_{2k} \times \Sp_{2n-2k})$.  If we take $(c,d) = (1,0)$,  $V = I_{2n}$,  and $\Omega = J_{2k}$ in Proposition~\ref{prop:spn-classification},  then
\[
\H = \Sp_{2n} \cap \H_1 = \Sp_{2n}(\C) \cap \Un_{2n} \cap \P\bigl(\Sp_{2k}(\C), \GL_{2n - 2k}(\C)\bigr) =  \Sp_{2k} \times \Sp_{2n - 2k}.
\]
From Table~\ref{tab:min}, there is an $\Sp_{2n}$-equivariant diffeomorphism $\Gr(k,\mathbb{H}^n) \cong \{ Q X Q^\ast: Q\in \Sp_{2n}\}$ for any $X \in \Sym^2_\oh(\C^{2n};J_{2n}) \cap \mathfrak{su}_{2n}$ whose stabilizer is isomorphic to $\H$ above.  Now we just check that we may pick $X = i\diag (\lambda I_k, \mu I_{n-k}, \lambda I_k, \mu I_{n-k} )$ where $\lambda, \mu \in \R$ are distinct and $k\lambda + (n-k)\mu = 0$. 
\end{proof} 

The values of $d_\MP(\M)$ now follow from Tables~\ref{tab:vec} and \ref{tab:mat}.
\begin{corollary}[Mostow--Palais dimensions]\label{cor:mp}
The lowest possible dimension of a Mostow--Palais embedding for any $\G$-manifold in Table~\ref{tab:set}, or, equivalently, its corresponding homogeneous space $\G/\H$ in Table~\ref{tab:homo}, is given in Table~\ref{tab:mp}.
\begin{table}[h]
\centering
\begin{minipage}[t]{0.45\textwidth}
\centering
\footnotesize
\tabulinesep=0.2ex
\begin{tabu}[t]{@{}c|c}
$\M$ & $d_\MP(\M)$\\
\hline
$\Gr(k,\R^n)$  &$\frac{1}{2}(n+2)(n-1)$\\
$\Gr(k,\C^n)$ &$n^2-1$\\
$\Gr(k,\mathbb{H}^n)$  &$(n-1)(2n+1)$\\
$\Gr_{\Sp}(2k,\R^{2n})$ &$(n-1)(2n+1)$\\
$\Gr_{\Sp}(2k,\C^{2n})$  &$(n-1)(2n+1)$\\
$\Gr_\C(k, \R^n)$ &$\frac{1}{2}(n+2)(n-1)$\\
$\operatorname{SLGr}(\R^{2n})$ &$\frac{1}{2}n(n+1)$\\
$\operatorname{LGr}(\C^{2n})$ &$2n^2+n$\\
$\operatorname{SLGr}^*(\mathbb{H}^{2n})$ &$n(2n-1)$\\
$\operatorname{SOGr}(\C^{2n})$ &$n(2n-1)$\\
$\operatorname{IGr}(k,n)$ &$\frac{1}{2}n(n-1)$\\
$\Gr(p,q,\R^{m+n})$ &$\frac{1}{2}(m+n+2)(m+n-1)$
\end{tabu}
\end{minipage}%
\begin{minipage}[t]{0.45\textwidth}
\centering
\footnotesize
\tabulinesep=0.2ex
\begin{tabu}[t]{@{}c|c}
$\M$ &$d_\MP(\M)$\\
\hline
$\Fl(k_1,\dots,k_m,\R^n)$ &$\frac{1}{2} (n+2)(n-1)$\\
$\Fl(k_1,\dots,k_m,\C^n)$ &$n^2-1$\\
$\operatorname{IFl}(k_1,\dots,k_m,\R^{2n})$ &$n(2n-1)$\\
$\operatorname{IFl}(k_1,\dots,k_m,\R^{2n+p})$ &$(n+\frac{p}{2})(2n+p-1)$\\
$\Fl_{\Sp}(2k_1,\dots,2k_m,\mathbb{F}^{2n})$ &$(n-1)(2n+1)$\\
$\operatorname{LFl}(k_1,\dots,k_m,\mathbb{C}^{2n})$ &$2n^2+n$\\
$\St_k(\R^n)$ &$nk$\\
$\St_k(\C^n)$ &$nk$\\
$\Stiefel_k(\R^n)$ &$nk$\\
$\Stiefel_k(\C^n)$ &$nk$\\
$\Stiefel_k(\mathbb{H}^n)$ &$4nk$
\end{tabu}
\end{minipage}
\caption{Mostow--Palais dimensions of the $\G$-manifolds in Table~\ref{tab:homo}.}
\label{tab:mp}
\end{table}
\end{corollary}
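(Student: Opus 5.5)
The corollary follows by reading off dimensions, once we know that each representation in Table~\ref{tab:mat} realizes $d_\MP(\M)$. So the plan is to combine Theorem~\ref{thm:mat} with the final assertion $\dim(\V) = d_\MP(\G/\H)$ of Theorem~\ref{thm:min}\ref{it:f2}, and then evaluate $\dim\V$ using Table~\ref{tab:vec}.

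\textbf{Step 1: identify $d_\MP$ with the dimension of a minimal faithful representation.} For each $\M$ in Table~\ref{tab:set}, take its homogeneous model $\G/\H$ from Table~\ref{tab:homo}. Theorem~\ref{thm:mat} exhibits a minimal faithful representation $\varepsilon:\G/\H\to\V$, namely the one in Table~\ref{tab:mat}. Each of these is a single orbit $\{g\cdot X\}$ in one irreducible module from Table~\ref{tab:min}, so Theorem~\ref{thm:min}\ref{it:f2} applies and gives $d_\MP(\M)=\dim_\R\V$ (or the complex dimension, consistently with the convention of Table~\ref{tab:vec}).

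The content behind this step is the following. By Proposition~\ref{prop:min-sum}, any Mostow--Palais embedding into a smaller $\V'$ decomposes into embeddings of $\G/\H_i$ into irreducibles. By the low-dimension Lemmas~\ref{lem:slnc}, \ref{lem:sonc}, \ref{lem:sopq}, \ref{lem:spnc} and Corollary~\ref{cor:slnr}, every irreducible below the threshold is among those listed. Comparing the stabilizers computed in Lemmas~\ref{lem:rectangle}--\ref{lem:JCF} then shows that no cheaper direct sum has intersection $\H$. I take this from Theorem~\ref{thm:min} rather than reproving it.

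\textbf{Step 2: substitute dimensions.} Read $\dim\V$ from Table~\ref{tab:vec}:
\begin{itemize}
\item $\Sym^2_\oh(\R^n)$ gives $\tfrac12(n+2)(n-1)$, which covers the real Grassmannian and the real flag manifold;
\item $\su_n$ gives $n^2-1$;
\item $\Sym^2_\oh(\F^{2n};J_{2n})$ and its compact form give $(n-1)(2n+1)$;
\item $\sp_{2n}$ gives $2n^2+n$;
\item $\Alt^2(\R^{2n})$ and $\Alt^2(\C^{2n})$ give $n(2n-1)$;
\item $\Alt^2(\R^{2n+p})$ gives $\tfrac12(2n+p)(2n+p-1)=(n+\tfrac p2)(2n+p-1)$;
\item the Stiefel rows give $nk$, and $4nk$ for $\C^{2n\times 2k}$ counted over $\R$.
\end{itemize}
The indefinite Grassmannian uses $\Sym^2_\oh(\R^{m+n};I_{m,n})$, whose dimension is $\tfrac12(m+n+2)(m+n-1)$.

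\textbf{Main obstacle.} The hard part is bookkeeping, not new mathematics. One issue is the real-versus-complex dimension convention in Definition~\ref{def:faith}, which uses $\dim_\R$: several entries, for example $n^2-1$ for $\su_n$ and $nk$ for $\C^{n\times k}$, appear to be stated over the field of definition, so I will fix one convention and check each row against it. The other issue is confirming that the size hypotheses of Theorem~\ref{thm:min} ($n\ge 9$, $n\ge19$, $n\ge5$) are in force for the relevant rows. I expect to state the corollary under those same hypotheses.
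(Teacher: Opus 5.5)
Your route is the paper's own. Its proof of this corollary is the single sentence that the values follow from Tables~\ref{tab:vec} and \ref{tab:mat}, which is your Steps~1--2 with $\dim\V=d_\MP$ taken from Theorem~\ref{thm:min}\ref{it:f2}. The gap is in the bookkeeping you postponed, and ``fix one convention and check each row'' does not work. Table~\ref{tab:mp} is exactly what the ``dimension over $\F$'' column of Table~\ref{tab:vec} gives. That means complex dimension for $\Sym^2_\oh(\C^{2n};J_{2n})$, $\Sym^2_\oh(\C^n)$, $\Sym^2(\C^n)$, $\Alt^2(\C^{2n})$, $\C^{n\times k}$, $\C^{2n\times 2k}$. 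It means real dimension for $\su_n$, $\sp_{2n}$, $\Sym^2_\oh(\C^{2n};J_{2n})\cap\su_{2n}$ and the real modules. With the $\dim_\R$ that Definition~\ref{def:faith} prescribes, $\dim\V$ is twice the tabulated value for $\Gr_{\Sp}(2k,\C^{2n})$, $\Gr_\C(k,\R^n)$, $\operatorname{SLGr}(\R^{2n})$, $\operatorname{SLGr}^*(\mathbb{H}^{2n})$, $\Fl_{\Sp}(2k_1,\dots,2k_m,\C^{2n})$, $\St_k(\C^n)$ and $\Stiefel_k(\C^n)$. In the $\SU_n$ rows the tabulated number is not attainable as a real dimension at all; for example $\Stiefel_1(\C^n)\cong S^{2n-1}$ does not embed in $\R^n$. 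So your argument proves the table only once $d_\MP$ is explicitly redefined as dimension over the field of definition of $\V$, and the corollary has to be stated that way.

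There are also row-level problems.
\begin{itemize}
\item Your ``$4nk$ for $\C^{2n\times 2k}$ counted over $\R$'' is an arithmetic slip: $\dim_\R\C^{2n\times 2k}=8nk$, and $4nk$ is its complex dimension.
\item That row of Table~\ref{tab:mat} is not a model of $\Stiefel_k(\mathbb{H}^n)$. Every $Q\in\Sp_{2n}$ commutes with $v\mapsto J_{2n}\bar v$, which sends $e_i$ to $-e_{n+i}$. So for $2k\le n$ the stabilizer of $\bigl[\begin{smallmatrix}I_{2k}\\0\end{smallmatrix}\bigr]$ is $\Sp_{2n-4k}$, not $\Sp_{2n-2k}$. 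The correct model is the orbit of $[e_1,\dots,e_k]$ in $\C^{2n\times k}\cong\mathbb{H}^{n\times k}$, of real dimension $4nk$ but complex dimension $2nk$. Once the model is corrected, this row agrees with $\dim_\R$ while the other complex rows agree with the complex count, so no single convention reproduces every row.
\item Your description of ``the content behind'' Step~1 does not cover the $\SU_n$ and $\Sp_{2n}$ rows with complex targets. Proposition~\ref{prop:sun-classification} allows only $\su_n$, and Proposition~\ref{prop:spn-classification} only sums of $\sp_{2n}$ and $\Sym^2_\oh(\C^{2n};J_{2n})\cap\su_{2n}$. So for $\SU_n/\SO_n(\R)$, $\SU_{2n}/\Sp_{2n}$ and the compact complex and quaternionic Stiefel manifolds you are relying on Table~\ref{tab:min} as stated. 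You are not relying on the low-dimension lemmas or any stabilizer comparison carried out in the paper.
\end{itemize}
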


As the last column of Table~\ref{tab:homo} indicates, after accounting for the compact groups $\SU_n$ (type A), $\SO_n(\R)$ (type BD), $\Sp_n$ (type C), all ten types of classical irreducible compact symmetric spaces \cite{Helgason2001} appear in our classification. These play important roles for other classification results in areas far removed from differential geometry --- two notable examples are the classification of random matrix ensembles \cite{Altland_Zirnbauer_1997} and that of topological insulators and superconductors \cite{Schnyder_Ryu_Furusaki_Ludwig_2008}. It is even more remarkable that the \emph{Cartan embeddings} \cite[Proposition~3.42]{embedding} of these symmetric spaces turn out to be either identical or equivalent in the sense of Definition~\ref{def:equiv} to the minimal faithful representations that we found.
\begin{corollary}[Cartan embedding is a minimal faithful representation]\label{cor:car}
The Cartan embedding of any symmetric space in Table~\ref{tab:symm} is equivalent to its minimal faithful representation in Table~\ref{tab:mat}.
\begin{table}[h!]
\footnotesize
\tabulinesep=0.2ex
\begin{tabu}{@{}c|c|c}
type & $\G/\H$   & Cartan embedding $\varepsilon(\G/\H)$ \\
\hline
AI & $\SU_{n}/\SO_n(\R)$ & $\{ QQ^\tp : Q \in \SU_n\}$ \\ 
AII & $\SU_{2n}/\Sp_{2n}$ & $\{Q J_{2n} Q^\tp J_{2n}^\tp : Q \in \SU_{2n}\}$ \\
AIII & $\SU_n/\mathsf{S}\bigl(\Un_k \times \Un_{n-k} \bigr)$ & $\{Q i\diag ( I_{k},  -I_{n-k})Q^{\ast} \diag ( I_{k},  -I_{n-k}) : Q \in \SU_n\}$\\
BDI & $\SO_n(\R)/\mathsf{S}\bigl(\O_k(\R) \times \O_{n-k}(\R)\bigr)$ & $\{Q \diag (I_{k},  -I_{n-k})Q^\tp \diag (I_{k},  -I_{n-k}): Q \in \SO_n(\R)\}$\\
DIII &$\SO_{2n}(\R)/\Un_{n}$ & $\{Q J_{2n} Q^\tp J_{2n}^\tp : Q \in \SO_{2n}(\R)\}$\\
CI &$\Sp_{2n}/\Un_{n}$ & $\{Q J_{2n} Q^* J_{2n}^\tp : Q \in \Sp_{2n}\}$\\ 
CII &$\Sp_{2n}/(\Sp_{2k} \times \Sp_{2n - 2k})$ & $\{Q \diag(I_{2k}, -I_{2n - 2k}) Q^* \diag(I_{2k}, -I_{2n - 2k}) : Q \in \Sp_{2n}\}$
\end{tabu}
\caption{Cartan embeddings of symmetric spaces. Types A, BD, C are omitted since $\SU_n$, $\SO_n(\R)$, $\Sp_n$ are already Cartan embedded.} 
\label{tab:symm}
\end{table}
\end{corollary}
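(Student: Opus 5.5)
Fix a row of Table~\ref{tab:symm}, with its Cartan embedding $\varepsilon_C(g\H)=g\,\sigma(g)^{-1}$ for the relevant involution $\sigma$, and the corresponding row $\varepsilon(g\H)=g\cdot X$ of Table~\ref{tab:mat}. I will produce the pair $(\varphi,\psi)$ required by Definition~\ref{def:equiv}. Take $\varphi=\mathrm{id}$ on $\G/\H$, and let $\psi$ be an explicit linear isomorphism of matrix spaces, namely right multiplication by a fixed invertible matrix $S$ (and possibly a scalar $i$). Here $S$ is the matrix encoding the involution: $S=\diag(I_k,-I_{n-k})$ for AIII and BDI, $S=\diag(I_{2k},-I_{2n-2k})$ for CII, $S=J_{2n}^\tp$ for AII, DIII and CI, and $S=I$ for AI. In every row the Cartan image has the form $\{gYg^{\#}S : g\in\G\}$, where $g^{\#}$ is $g^\tp$ or $g^\ast$ and $Y$ is the base point. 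Hence the map $\psi: Z\mapsto ZS^{-1}$ sends it onto the orbit $\{gYg^{\#}\}$. This orbit is exactly a Table~\ref{tab:mat} representation, or its image under a further explicit isomorphism.

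The key steps, in order, are as follows.

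First, for each type, check that $\psi$ is $\G$-equivariant once $\G$ acts on the Cartan side by the action inherited from $\varepsilon_C$. That action is $Z\mapsto gZ\sigma(g)^{-1}$. Since $\sigma(g)^{-1}=S^{-1}g^{\#}S$, we get $\psi(gZ\sigma(g)^{-1})=gZS^{-1}g^{\#}=g\cdot\psi(Z)$ for the congruence-type action $W\mapsto gWg^{\#}$ used in Table~\ref{tab:mat}. For compact $g$ one has $g^\ast=g^{-1}$, so congruence by $g^\ast$ coincides with similarity. This is exactly why the unitary rows of Table~\ref{tab:mat} can be written either way.

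Second, identify the target modules. For AIII, $\psi$ lands in $\{iQ\diag(I_k,-I_{n-k})Q^\ast\}$. This is the $\Gr(k,\C^n)$ row with $(\lambda,\mu)=(1,-1)$, after subtracting the scalar $\tfrac{2k-n}{n}iI$, which makes the matrices traceless in $\su_n$. Translation by a central fixed vector is an affine equivariant isomorphism. I would either allow the composite with this translation, or argue that the stabilizers coincide, so that by Lemma~\ref{lem:bijection} the two orbits are equivariantly diffeomorphic and hence equivalent. BDI is handled the same way inside $\Sym^2_\oh(\R^n)$, and CII inside $\Sym^2_\oh(\C^{2n};J_{2n})\cap\su_{2n}$. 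For AI, AII and DIII the images are already $\{QQ^\tp\}$ and $\{QJ_{2n}Q^\tp\}$, which are the rows $\operatorname{SLGr}$, $\operatorname{SLGr}^\ast$ and $\operatorname{SOGr}$ verbatim. For CI, $\psi$ gives $\{QJ_{2n}Q^\ast\}$. Since $Q\in\Sp_{2n}$ we have $Q^\ast=J_{2n}^\tp Q^{-1}J_{2n}$ up to sign conventions, so this orbit is carried by a further fixed multiplication to $\{iQ\diag(I_n,-I_n)Q^\ast\}$, the $\operatorname{LGr}$ row. I would confirm this by checking that both base points have stabilizer $\Un_n$ and invoking Lemma~\ref{lem:embedding}.

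The main obstacle I expect is the trace normalisation in AIII, BDI and CII. The Cartan image $\{QSQ^{\#}\}$ does not lie in the traceless module, so a genuine linear $\G$-module isomorphism cannot map it onto the Table~\ref{tab:mat} orbit. I plan to resolve this by decomposing $\F^{n\times n}\supseteq\V=\V_\oh\oplus\F I$ via Proposition~\ref{prop:decomp-irrep}. The component of the Cartan embedding in the trivial summand is constant, so it does not affect faithfulness. Its complementary representation in $\V_\oh$ is then literally the Table~\ref{tab:mat} representation with $(\lambda,\mu)=(1-\tfrac{2k}{n},-\tfrac{2k}{n})$, rescaled to satisfy $k\lambda+(n-k)\mu=0$. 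So the equivalence will be stated modulo this trivial constant summand, which I would make explicit in the proof.
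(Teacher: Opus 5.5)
Your route is the paper's: take $\varphi=\mathrm{id}$ and let $\psi$ be right multiplication by the constant matrix encoding the involution. The paper's proof simply asserts that the two images ``are either identical or differ by a constant invertible matrix multiplied on the right.''

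Your trace caveat is correct, and the paper's proof does not address it. For AIII, BDI and CII with $n\neq 2k$, the Cartan image spans a $\G$-module one dimension larger than the Table~\ref{tab:mat} target. For AIII this is $\mathfrak{u}_n\,\diag(I_k,-I_{n-k})$, of real dimension $n^2$, against $\su_n$ of dimension $n^2-1$. So no $\G$-module isomorphism $\psi$ as in Definition~\ref{def:equiv} can exist. After right multiplication, the Cartan embedding is the Table~\ref{tab:mat} representation direct-summed with a nonzero constant map into a trivial summand, and so it is not itself minimal. Your stabilizer/Lemma~\ref{lem:bijection} fallback only produces $\varphi$, not $\psi$. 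Your final formulation, ``equivalent modulo the constant trivial summand,'' is the right one. Literal equivalence holds only for AI, AII, DIII, CI, and for $n=2k$.

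Three local corrections:
\begin{enumerate}[(1)]
\item The traceless part of $\diag(I_k,-I_{n-k})$ is $\diag\bigl(\tfrac{2(n-k)}{n}I_k,-\tfrac{2k}{n}I_{n-k}\bigr)$. Your $(\lambda,\mu)=(1-\tfrac{2k}{n},-\tfrac{2k}{n})$ violates $k\lambda+(n-k)\mu=0$, and no rescaling repairs a wrong ratio.
\item In CI, the identity $Q^\ast=J_{2n}^\tp Q^{-1}J_{2n}$ is false: it would force $Q$ to commute with $J_{2n}$. The correct identity is $Q^\ast=Q^{-1}=J_{2n}^\tp Q^\tp J_{2n}$. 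You do not need it anyway. The matrix $P\coloneqq\tfrac{1}{\sqrt2}\begin{bsmallmatrix} I_n & -iI_n\\ -iI_n & I_n\end{bsmallmatrix}$ lies in $\Sp_{2n}$ and satisfies $PJ_{2n}P^\ast=i\diag(I_n,-I_n)$. Hence $\{QJ_{2n}Q^\ast: Q\in\Sp_{2n}\}$ and the $\operatorname{LGr}(\C^{2n})$ row are the same set. The diffeomorphism is then $\varphi(g\H)=gP^{-1}\H'$, with $\H'=P\H P^{-1}$, rather than the identity.
\item For CII, the matrix $\diag(I_{2k},-I_{2n-2k})$ copied from Table~\ref{tab:symm} does not commute with $J_{2n}$. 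Consequently $i\diag(I_{2k},-I_{2n-2k})\notin\Sym^2(\C^{2n};J_{2n})$. Use $S=\diag(I_k,-I_{n-k},I_k,-I_{n-k})$ instead, which matches the $\Gr(k,\mathbb{H}^n)$ row.
\end{enumerate}
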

\begin{proof}
First note that a Cartan embedding is a manifold representation in the sense of Definition~\ref{def:rep}.
This follows either from our Theorem~\ref{thm:min} or by directly comparing the Cartan embeddings in the third column of Table~\ref{tab:symm} with their minimal faithful representations in the second column of Table~\ref{tab:mat}, and noting they are either identical or differ by a constant invertible matrix multiplied on the right. This gives us an equivalence as in Definition~\ref{def:equiv}.
\end{proof}
It is not the case that all models for symmetric spaces of compact type have faithful representations. In Table~\ref{tab:symm}, we chose our models to be the standard form $\G/\G^\sigma$ \cite{Bump13}, where $\sigma$ is an involutive automorphism of $\G$. There are alternative models used in the literature imposing other conditions like simply-connectedness. Take the simply-connected model for type BDI as example: It is the oriented Grassmannian $\SO_n(\R)/\bigl(\SO_k(\R) \times \SO_{n-k}(\R)\bigr)$, and this does not admit a faithful representation by Theorem~\ref{thm:min}.

\section{Conclusion}

The four contributions in this work are, in order of increasing specificity, (i) proposing a general theory of representations of $\G$-manifolds (Section~\ref{sec:man}); (ii) specializing to the representations of homogeneous spaces $\G/\H$ with emphasis on faithful ones (Section~\ref{sec:homo}); (iii) giving a near-complete classification of  faithfully representable $\G/\H$ when $\G$ is a simple classical group (Sections~\ref{sec:red}--\ref{sec:C}); and (iv) describing these faithful representations explicitly and proving their minimality (Section~\ref{sec:min}).

As we mentioned in Section~\ref{sec:intro}, the focus on faithful manifold representations here is natural, because faithful group and algebra representations  also  took precedence in the history of these subjects: Group representation theory grew out of Dedekind and Frobenius's exchanges about the group determinant \cite[p.~366]{history1}, and underlying it is the regular representation, a faithful representation. Likewise, Lie's original study of Lie algebras as infinitesimal transformations is, in modern lingo, a faithful representation on the space of smooth functions \cite[p.~252]{history2}.

Just as a faithful representation of a group encodes group operation as matrix multiplication and a faithful representation of an algebra  encodes Lie bracket as matrix commutator, a faithful representation of a $\G$-manifold encodes the group action as a matrix product. In applications and computations, having a matrix representation of an abstract object can be enormously beneficial, allowing for abstract operations to be concretely performed via standard numerical linear algebra. Take manifold optimization \cite{Edelman_Arias_Smith} for example, the list of candidate manifolds in this vastly popular subject is unfortunately quite limited, essentially one has just four choices for $\M$ \cite[p.~622]{Ye_Wong_Lim_2022}. But with the work in this article, the list has now been expanded to not just the $25$ manifolds in Tables~\ref{tab:mat} but infinitely many possibilities with Theorem~\ref{thm:min}.

From a purely mathematical angle, the results in Section~\ref{sec:min} show that faithful representations can shed new light on old geometry problems: We obtained the best possible effective bounds for the dimensions of Mostow--Palais embeddings in Table~\ref{tab:mp}. Prior to these results, it was only known that these dimensions are finite; there were no effective bounds. 

\subsection*{Future work} The are several immediate avenues to extend our study of faithfulness: (a) when $\G$ is one of the omitted simple classical groups $\SO^*_{2n}$, $\SU_{p, q}$, $\Sp_{p, q}$, $\SU^*_{2n}$, or an exceptional group; (b) when $\G$ is semisimple or reductive; (c) when the $\G$-action on $\M$ is nontransitive. Nevertheless, we view the study of faithful representations as a beginning. Such representations are $\G$-equivariant \emph{embeddings} into a space of matrices. So we might for instance study $\G$-equivariant \emph{immersions} or \emph{submersions} in the context of manifold representation.


\bibliographystyle{abbrv}

\end{document}